\renewcommand*\l@author[2]{}
\renewcommand*\l@title[2]{}
\newtheorem{thm}[theorem]{Theorem}
\newtheorem{cor}[theorem]{Corollary}
\newtheorem{lem}[theorem]{Lemma}
\newtheorem{prop}[theorem]{Proposition}
\theoremstyle{definition}
\newaliascnt{defn}{theorem}
\newtheorem{exa}[theorem]{Example}
\newtheorem{rmk}[theorem]{Remark}
\newsavebox{\qedB}
\sbox{\qedB}{\setlength{\unitlength}{1mm}
 \begin{picture}(4,4)(0,0)
  \thinlines
  {\put(0,0){\framebox(2.83,2.83){}}}%
  {\put(1.17,1.17){\framebox(2.83,2.83){}}}%
  {\put(0,0){\framebox(4,4){}}}%
  {\put(1.17,1.17){{\rule{1ex}{1ex} }}}%
 \end{picture}}
\newcommand{\QEDB}{\ifmmode\def\next{\tag"\usebox{\qedB}"}%
 \else\let\next=\relax
 {\unskip\nobreak\hfil\penalty50
 \hskip2em\hbox{}\nobreak\hfil\usebox{\qedB}
 \parfillskip=0pt \finalhyphendemerits=0\penalty-100\bigskip}\fi\next}
\newcommand{\N}{\mathbb N}
\newcommand{\C}{\mathbb{C}}
\newcommand{\bprop}{\begin{prop}}
\newcommand{\eprop}{\end{prop}}
\newcommand{\bcor}{\begin{cor}}
\newcommand{\ecor}{\end{cor}}
\newcommand{\blem}{\begin{lem}}
\newcommand{\elem}{\end{lem}}
\newcommand{\ceil}[1]{\left\lceil#1\right\rceil}
\newcommand{\floor}[1]{\left\lfloor#1\right\rfloor}
\newcommand{\h}[1]{\hspace{#1pt}}
\newcommand{\ol}[1]{\overline{#1}}
\renewcommand{\floor}[1]{\lfloor #1 \rfloor}
\title{On the Partial Sum of Subword-Counting Sequences}
\author{Pranjal Jain\inst{1} \and Shuo Li\inst{2}}
\institute{Department of Mathematics,\\
Indian Institute of Science Education and Research,\\
Pune, India\\
\email{pranjal.jain@students.iiserpune.ac.in} 
\and
Department of Mathematics and Statistics,\\
The University of Winnipeg,\\
Winnipeg (MB), Canada\\
\email{shuo.li.ismin@gmail.com}
}
\begin{document}
    
\maketitle

\begin{abstract}
Let $w$ be a finite word over the alphabet $\{0,1\}$. For any natural number $n$, let $s_w(n)$ denote the number of occurrence of $w$ in the binary expansion of $n$ as a scattered subsequence. We study the behavior of the partial sum $\sum_{n=0}^N(-1)^{s_w(n)}$ and characterize several classes of words $w$ satisfying $\sum_{n=0}^N(-1)^{s_w(n)}= O(N^{1-\epsilon})$ for some $\epsilon >0$.
\end{abstract}
\begingroup
\let\clearpage\relax
\tableofcontents*
\endgroup
\newpage
\section{Introduction}

Let $w$ be a finite word over the alphabet $\{0,1\}$. For any natural number $n$, let $s_w(n)$ and $e_w(n)$ denote respectively the number of occurrence of $w$ in the binary expansion of $n$ as a scattered subword and a consecutive subword. To be precise, a scattered subword of a finite word $u=u_1u_2\ldots u_n$ is a sequence of the form $u_{j_1}u_{j_2}\ldots u_{j_k}$ with $1 \leq j_1<j_2 <\cdots <j_k \leq n$, while a consecutive subword is of the form $u_iu_{i+1}\ldots u_j$ with $1 \leq i <j \leq j$. For example, consider $n=26$ and $w=10$. The binary expansion of $26$ is $11010$, so $s_w(26)=5$ and $e_w(26)=2$. In the literature, `scattered subword' can be abbreviated to `subword' while a consecutive subword can be called a factor, see \cite[Chapter 1, Chapter 6]{lothaire1}. In the context of this article, the sequence $((-1)^{s_w(n)})_{n \in \N}$ is called a subword-counting sequence and $((-1)^{e_w(n)})_{n \in \N}$ is called a factor-counting sequence.\\

Partial sums of factor-counting sequences have been widely studied since the 1960s, see~\cite{Rudin,Shapiro,allouche91,morton90,DavidW1989,Allouche99}. In particular, the sequence $((-1)^{e_{11}(n)})_{n \in \N}$ is the well-known $\pm 1$-Rudin-Shapiro sequence (\cite[Example 3.3.1]{allouche_shallit_2003}). Rudin and Shapiro proved separately in \cite{Rudin} and~\cite{Shapiro} that 
$$\max_{0 \leq \theta <1}\left|\sum_{n=0}^N(-1)^{e_{11}(n)}e^{2 \pi i n \theta}\right|=O(\sqrt{N}), N \geq 1.$$ The sequence $((-1)^{e_{1}(n)})_{n \in \N}$ is the $\pm 1$-Thue-Morse sequence (\cite[P. 15]{allouche_shallit_2003}) and a similar result involving this sequence was obtained by Gelfond in~\cite{Gelfond1968}. However, the partial sum of subword-counting sequences was less studied in the literature. It is proved by Lafrance et. al. in \cite{LAFRANCE} that $$\left|\sum_{n=0}^N(-1)^{s_{10}(n)}\right|= O(\sqrt{N}), N \geq 1,$$ while Allouche proved in \cite{Allouche2017} that $$\left|\sum_{n=0}^N(-1)^{s_{10}(n)}e^{2 \pi i n \theta}\right| \neq O(\sqrt{N}), N \geq 1,$$ for infinitely many $\theta$. In this article, we establish a framework to study the partial sum of general subword-counting sequences and characterize several classes of words $w$ satisfying the {\em Property $P$}:
\begin{equation}\label{growth} \left|\sum_{n=0}^N(-1)^{s_w(n)}\right|=O(N^{1-\epsilon}),\end{equation} for some $\epsilon >0$. 

The main results of this article are stated in \Cref{simple}, \Cref{one-run},\Cref{long-prefix} and \Cref{two-runs}. The article is organized as the follows: In \Cref{sec1} we recall some basic notion and definitions on words and on matrices and we introduce some basic lemmas concerning the subword-counting functions; in \Cref{sec2}, we illustrate our proof strategy by giving two particular examples and discuss the difference between the techniques applied to block-counting sequences and subword-counting sequences; in \Cref{sec3}, we introduce an efficient way to study the partial sums of the subword-counting sequences in using matrix computations and show how we construct these matrices; in \Cref{sec4}, we study the eigenvalues of these matrices and in \Cref{sec5} we study the orbits of some special vectors acted by these matrices; in \Cref{sec6} we prove the main results; in \Cref{sec7}, we conclude the article by reviewing the techniques used in this article and discuss some potential research direction.

\section{Notation and basic identities}\label{sec1}

Let $\{0,1\}^*$ be the set of all finite sequences over the alphabet $\{0,1\}$. The elements in this set are called {\em words}. In particular, the {\em empty word} also belongs to $\{0,1\}^*$, and is denoted by $\varepsilon$. For $w=w_1w_2\ldots w_\ell \in \{0,1\}^*$, $\ell$ is called the {\em length} of $w$. The length of $w\in\{0,1\}^*$ is also denoted by $|w|$. For $\ell$ any positive integer, define $\{0,1\}^\ell$ to be the set of length-$\ell$ words over $\{0,1\}$. A {\em prefix} (resp. {\em suffix}) of $w$ is a string of the form $w_1w_2\ldots w_t$ (resp. ${w_tw_{t+1}\ldots w_\ell}$) for some integer $t$ such that $1 \leq t \leq \ell$. A {\em factor} of $w$ is a string ${w_tw_{t+1}\ldots w_k}$ such that $1 \leq t \leq k \leq \ell$. Such a string is denoted by $w[t,k]$ for short. The {\em first} and {\em last} letters of $w$ refer respectively to the leftmost and rightmost letters of $w$. For any words $w,v \in \{0,1\}^*$, the {\em concatenation} of $w$ and $v$ is defined to be $wv=w_1w_2\ldots w_{|w|}v_1v_2\ldots v_{|v|}$. For $w \in \{0,1\}^*$ and $k\geq0$, let $w^k$ represent the $k$-th fold concatenation of $w$, i.e. $w^k=\underbrace{ww\ldots w}_{k \;\;\text{times}}$.\\

Let $\land$ and $\oplus$ denote the logical operations `and' and `exclusive or' respectively, i.e. for $a, b \in \{0,1\}$, $a \land b=1$ if $a=b=1$ and $a \land b=0$ otherwise; $a \oplus b=0$ if $a=b$ and $a \oplus b=1$ otherwise. For any $w,v \in \{0,1\}^*$ of the same length $\ell$, let us define $w \land v=(w_1\land v_1)(w_2\land v_2)\ldots (w_\ell\land v_\ell)$ and $w \oplus v=(w_1\oplus v_1)(w_2\oplus v_2)\ldots (w_\ell\oplus v_\ell)$. Likewise, let $\overline{\phantom{x}}$ denote the logical operation `not', i.e. $\overline{0}=1$ and $\overline{1}=0$. For any $w \in \{0,1\}^*$, define $\overline{w}=\overline{w_1}\overline{w_2}\ldots\overline{w_{|w|}}$.\\

Let $M_k(\C)$ denote the complex vector space of $k\times k$ matrices with complex entries. For $M\in M_k(\C)$, let $\|M\|$ denote the operator norm of $M$, i.e.
$$\|M\|:=\sup_{\substack{v\in\C^k\\\|v\|=1}}\|Mv\|,$$
where $\|v\|$ denotes the usual Euclidean norm of $v\in\C^k$.\\

Recall that, for any integer $n\geq0$, $s_w(n)$ denotes the number of occurrences of $w$ as a subword of the {\em ordinary binary expansion} of $n$. By ordinary binary expansion, we mean the binary expansion without leading zeros for all nonzero integers. By convention, let $s_0(0)=1$ and $s_\varepsilon(n)=1\,\,\forall\,n\geq0$. For $w\in\{0,1\}^\ast$ and $n\geq 1$, one has
\begin{align}
\begin{split}\label{w0}
s_{w0}(2n)&=s_{w0}(n)+s_w(n)\\
s_{w0}(2n+1)&=s_{w0}(n)\\
\end{split}\\
\begin{split}\label{w1}
s_{w1}(2n)&=s_{w1}(n)\\
s_{w1}(2n+1)&=s_{w1}(n)+s_{w}(n)\\
\end{split}
\end{align}
It is important to note that these recurrences do not always hold for $n=0$.

\section{Two examples}\label{sec2}

In this section, we introduce the general method to estimate the growth rate of $\sum\limits_{n=0}^N(-1)^{s_w(n)}$ by giving two examples.

\begin{exa}\label{ex-01}
Let us check whether $01$ satisfies \textit{Property P}. Hence, set $S_N=\sum\limits_{n=0}^N(-1)^{s_{01}(n)}$. We also introduce $T_N=\sum\limits_{n=0}^{N}(-1)^{s_{01}(n)+s_{0}(n)}$.
One has
\begin{align}\label{S_2N+1}
S_{2N+1}&=\sum_{n=0}^{2N+1}(-1)^{s_{01}(n)}\nonumber\\
&=(-1)^{s_{01}(0)}+(-1)^{s_{01}(1)}+\sum_{n=1}^{N}(-1)^{s_{01}(2n)}+\sum_{n=1}^{N}(-1)^{s_{01}(2n+1)}\nonumber\\
&=2+\sum_{n=1}^{N}(-1)^{s_{01}(n)}+\sum_{n=1}^{N}(-1)^{s_{01}(n)+s_{0}(n)}\text{ (using \eqref{w1})}\nonumber\\
&=2+S_N+T_N.
\end{align}
\begin{align}\label{T_2N+1}
T_{2N+1}&=\sum_{n=0}^{2N+1}(-1)^{s_{01}(n)+s_0(n)}\nonumber\\
&=(-1)^{s_{01}(0)+s_0(0)}+(-1)^{s_{01}(1)+s_0(1)}+\sum_{n=1}^{N}(-1)^{s_{01}(2n)+s_0(2n)}\nonumber\\
&\quad+\sum_{n=1}^{N}(-1)^{s_{01}(2n+1)+s_0(2n+1)}\nonumber\\
&=\sum_{n=1}^{N}(-1)^{s_{01}(n)+s_0(n)+s_\varepsilon(n)}+\sum_{n=1}^{N}(-1)^{s_{01}(n)+2s_{0}(n)}\text{ (using \eqref{w0} and \eqref{w1})}\nonumber\\
&=-2-T_N+S_N.
\end{align}
Adding \eqref{S_2N+1} and \eqref{T_2N+1} yields
$$S_{2N+1}+T_{2N+1}=2S_N,$$
and consequently, 
$$S_{4N+3}=2+2S_N\text{ (using \eqref{S_2N+1})}.$$
Since $|S_{4N+i}-S_{4N+3}|\leq 3$ for $0\leq i<4$, the above yields
\begin{align*}
|S_{N}|\leq 2\left|S_{\floor{\frac{N}{4}}}\right|+5,\,\,\forall\,N\geq0.
\end{align*}
Hence, we obtain
$$S_N= O\left(N^{\frac{1}{2}}\right).$$
A similar argument looking at the difference of \eqref{S_2N+1} and \eqref{T_2N+1} yields
$$T_N= O\left(N^{\frac{1}{2}}\right).$$
\end{exa}

\begin{rmk}
The splitting techniques used in \Cref{ex-01} has also appeared in the study of the partial sums of the form $\sum_{n=0}^N(-1)^{e_w(n)}$, see, for example, \cite[Theorem 3.3.2]{allouche_shallit_2003}. 
However, the subword-counting case is more complicated than the block-counting case. This is because in the block-counting case, the `new' summation obtained after a `splitting' only involves a block-counting sequence, but in the subword-counting case it involves a product of several (in this case, two) subword-counting sequences. Thus, a new technique is required, and in this vein we illustrate our general method with a more complicated example.
\end{rmk}



\begin{exa}\label{ex-011}
Now, we check whether $011$ satisfies \textit{Property P}. Hence, set $S_N=\sum\limits_{n=0}^N(-1)^{s_{011}(n)}$. As before, we introduce the following.
\begin{align*}
T_N&=\sum_{n=0}^{N}(-1)^{s_{011}(n)+s_{01}(n)};\\
U_N&=\sum_{n=0}^N(-1)^{s_{011}(n)+s_0(n)};\\
V_N&=\sum_{n=0}^N(-1)^{s_{011}(n)+s_{01}(n)+s_0(n)}.
\end{align*}
Using \eqref{w0} and \eqref{w1} as in \Cref{ex-01}, one has
\begin{align*}
S_{2N+1}&=\sum_{n=0}^{2N+1}(-1)^{s_{011}(n)}\\
&=(-1)^{s_{011}(0)}+(-1)^{s_{011}(1)}+\sum_{n=1}^{N}(-1)^{s_{011}(n)}+\sum_{n=1}^{N}(-1)^{s_{011}(n)+s_{01}(n)}\\
&=S_N+T_N;\\
T_{2N+1}&=2+\sum_{n=1}^N(-1)^{s_{011}(n)+s_{01}(n)}+\sum_{n=1}^N(-1)^{s_{011}(n)+2s_{01}(n)+s_0(n)}\\
&=2+T_N+U_N;\\
U_{2N+1}&=\sum_{n=1}^N(-1)^{s_{011}(n)+s_0(n)+s_\varepsilon(n)}+\sum_{n=1}^N(-1)^{s_{011}(n)+s_{01}(n)+s_0(n)}\\
&=-U_N+V_N;\\
V_{2N+1}&=\sum_{n=1}^N(-1)^{s_{011}(n)+s_{01}(n)+s_0(n)+s_\varepsilon(n)}+\sum_{n=1}^N(-1)^{s_{011}(n)+2s_{01}(n)2+s_0(n)}\\
&=-2-V_N+S_N.
\end{align*}
Since there are many equations to work with for obtaining a recurrence for $S_N$, it is helpful to represent the above equations in matrix-vector form. Hence, we define
$$c=\begin{bmatrix}
0\\
2\\
0\\
-2
\end{bmatrix}, M=\begin{bmatrix}
1 & 1 & 0 & 0\\
0 & 1 & 1 & 0\\
0 & 0 & -1 & 1\\
1 & 0 & 0 & -1
\end{bmatrix}, V_N=\begin{bmatrix}
S_{N}\\
T_{N}\\
U_{N}\\
V_{N}
\end{bmatrix}.$$
The above equations can now be stated succinctly as
\begin{align}
v_{2N+1}=c+Mv_N.\label{matrix-vector-011}
\end{align}
Since $\|v_{2N+i}-v_{2N+1}\|$ for $i\in\{0,1\}$ is bounded, \eqref{matrix-vector-011} yields
\begin{align}\label{v-growth-011}
\left\|v_{N}\right\|\leq c'+\left\|Mv_{\floor{\frac{N}{2}}}\right\|,
\end{align}
for some constant $c'$. To show that $011$ satisfies \textit{Property P}, it now suffices to show that $\|v_N\|\in O(N^{1-\epsilon})$ for some $\epsilon>0$. In light of \eqref{v-growth-011}, we may instead show that $\|M^n\|\in O(2^{n(1-\epsilon)})$, which happens if and only if all eigenvalues of $M$ have absolute value less than $2$. To analyse the eigenvalues of $M$, observe that
$$M=\underbrace{\begin{bmatrix}
1 & 0 & 0 & 0\\
0 & 1 & 0 & 0\\
0 & 0 & -1 & 0\\
0 & 0 & 0 & -1
\end{bmatrix}}_A+\underbrace{\begin{bmatrix}
0 & 1 & 0 & 0\\
0 & 0 & 1 & 0\\
0 & 0 & 0 & 1\\
1 & 0 & 0 & 0
\end{bmatrix}}_B.$$
Suppose $\lambda\in\C$ is an eigenvalue of $M$ with $|\lambda|\geq2$, and let $v\in\C^4\setminus \{0\}$ be the corresponding eigenvector. One has
\begin{align}\label{f_0}\lambda v=Mv=Av+Bv.\end{align}
Since $\|Av\|=\|Bv\|=\|v\|$ for all $v$, \eqref{f_0} implies that $|\lambda|\leq 2$ with equality if and only if $Av=Bv=\frac{\lambda}{2}v$. Hence $|\lambda|=2$ implies that $v$ is also an eigenvector of $A$ and $B$ with eigenvalue $\frac{\lambda}{2}$. In particular, all entries of $v$ must be non-zero since $B$ is a cyclic permutation matrix. However this implies that $v$ cannot be an eigenvector of $A$. This is a contradiction, so we conclude that all eigenvalues of $M$ have absolute value less than $2$, and consequently $011$ satisfies \textit{Property P}.\\

Additionally, if $\lambda$ is the eigenvalue of $M$ with largest absolute value, then one has
$$\|v_N\|=O\left(N^{1-\epsilon}\right)$$
for all $\epsilon<1-\log_2|\lambda|$. In this specific example the Jordan Canonical Form of $M$ is
$$\begin{bmatrix}
    0 & 1 & 0 & 0\\
    0 & 0 & 0 & 0\\
    0 & 0 & \sqrt{2} & 0\\
    0 & 0 & 0 & -\sqrt{2}
\end{bmatrix},$$
so in fact we obtain
$$\|v_N\|=O\left(N^{\frac{1}{2}}\right).$$
This shows that $S_N,T_N,U_N,V_N\in O(N^{\frac{1}{2}})$.
\end{exa}


\section{Constructing the associated matrix}\label{sec3}
In this section, we will construct the matrix that plays a role analogous to that of $M$ in \Cref{ex-011} in a general setup. First, we introduce some notation for summations of subword-counting sequences and their products. For $\ell\geq1$ and $u,w\in\{0,1\}^\ell$, define 
\begin{align*}
\begin{bmatrix}
w\\
u
\end{bmatrix}: \N \to \{0,1\};n\mapsto&\sum_{\substack{i=1\\\text{with } u_i=1}}^\ell s_{u_1\ldots u_i}(n)\pmod2\\
=&\quad\sum_{i=1}^\ell u_i\cdot s_{u_1\ldots u_i}(n)\pmod2.
\end{align*}

\begin{exa}\label{ex-brackets}
The above definition yields
$$\begin{bmatrix}
abcd\\
0101
\end{bmatrix}(n)=s_{ab}(n)+s_{abcd}(n)\pmod2.$$
\end{exa}
With $\ell,u,w$ as before, define functions $S_0(w),S_1(w):\{0,1\}^\ell\to\{0,1\}^\ell$ and $T_0(w),T_1(w):\{0,1\}^\ell\to\{0,1\}$ as follows, where $u'_i,u''_i,t',t''$ defined below.
\begin{align*}
S_0(w)(u)&:=u'_1\ldots u'_\ell\\
S_1(w)(u)&:=u''_1\ldots u''_\ell\\
T_0(w)(u)&:=t'\\
T_1(w)(u)&:=t''
\end{align*}
$u'_i,u''_i,t',t''$ are defined as follows.
\begin{align}
\begin{split}\label{logical-defn}
t'u'_1\ldots u'_\ell=\left(\overline{w}0\land u0\right)\oplus 0u;\\
t''u''_1\ldots u''_\ell=\left(w0\land u0\right)\oplus 0u.
\end{split}
\end{align}
A compact version of this definition is as follows, where $a\in\{0,1\}$.
\begin{equation}\label{combined}
T_a(w)(u)\,S_a(w)(u):=\left[\left(\overline{a}^\ell\oplus w\right)\hspace{-2pt}0\land u0\right]\oplus 0u.
\end{equation}
\begin{rmk}\label{symmetry}
It is easy to see that $S_{a}(w)=S_{\overline{a}}(\overline{w})$ and $T_{a}(w)=T_{\overline{a}}(\overline{w})$.
\end{rmk}

\begin{lem}
One has
\begin{align}
\begin{bmatrix}
w\\
u
\end{bmatrix}(2n)&=\begin{bmatrix}
w\\
S_0(w)(u)
\end{bmatrix}(n)+T_0(w)(u)\pmod2\text{ (for }n\geq1)\label{S_0-sq};\\
\begin{bmatrix}
w\\
u
\end{bmatrix}(2n+1)&=\begin{bmatrix}
w\\
S_1(w)(u)
\end{bmatrix}(n)+T_1(w)(u)\pmod2\text{ (for }n\geq1)\label{S_1-sq}.
\end{align}
\end{lem}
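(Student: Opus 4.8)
The plan is to prove \eqref{S_0-sq} and \eqref{S_1-sq} by a direct computation: expand the left-hand bracket through its definition, apply the one-letter recurrences \eqref{w0} and \eqref{w1} to each summand, and then match the resulting coefficients against the combinatorial description of $S_0,T_0$ (resp.\ $S_1,T_1$) encoded in \eqref{logical-defn}. I will carry out \eqref{S_0-sq} in detail; \eqref{S_1-sq} is entirely analogous, with $w_i$ taking the role of $\overline{w_i}$ throughout, which is precisely the difference between the two lines of \eqref{logical-defn}.

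First I would abbreviate $b_i:=s_{w[1,i]}(n)$ for $0\le i\le\ell$, so that $b_0=s_\varepsilon(n)=1$ and, by the definition of the bracket (cf.\ \Cref{ex-brackets}), $\begin{bmatrix} w\\ u\end{bmatrix}(n)\equiv\sum_{i=1}^\ell u_i b_i\pmod2$. The key observation is that the length-$i$ prefix $w[1,i]$ ends in the letter $w_i$, so precisely one of \eqref{w0}, \eqref{w1} governs $s_{w[1,i]}(2n)$. Unifying the two cases — the correction term $s_{w[1,i-1]}(n)=b_{i-1}$ appears exactly when $w_i=0$, i.e.\ when $\overline{w_i}=1$ — gives the single formula
$$s_{w[1,i]}(2n)=b_i+\overline{w_i}\,b_{i-1},\qquad 1\le i\le\ell,$$
valid for $n\ge1$; here the index $i=1$ is subsumed because $b_0=s_\varepsilon\equiv1$, and this is the only place the hypothesis $n\ge1$ enters.

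Substituting this into the bracket, splitting off the $i=1$ term of the correction sum (which equals the constant $u_1\overline{w_1}b_0=u_1\overline{w_1}$) and reindexing the rest by $i\mapsto i+1$, I would obtain
\begin{align*}
\begin{bmatrix} w\\ u\end{bmatrix}(2n)&\equiv u_1\overline{w_1}+\sum_{i=1}^{\ell-1}\bigl(u_i\oplus(\overline{w_{i+1}}\land u_{i+1})\bigr)b_i+u_\ell b_\ell\pmod2.
\end{align*}
It then remains to recognise the coefficients: unwinding the componentwise meaning of $t'u'_1\ldots u'_\ell=(\overline{w}0\land u0)\oplus0u$ in \eqref{logical-defn} shows that $T_0(w)(u)=\overline{w_1}\land u_1$, that $u'_i=(\overline{w_{i+1}}\land u_{i+1})\oplus u_i$ for $1\le i\le\ell-1$, and that $u'_\ell=u_\ell$. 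These are exactly the constant and the coefficients of the $b_i$ found above, so the right-hand side equals $T_0(w)(u)+\sum_{i=1}^\ell u'_i b_i=T_0(w)(u)+\begin{bmatrix} w\\ S_0(w)(u)\end{bmatrix}(n)$, which is \eqref{S_0-sq}.

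I expect the main obstacle to be organisational rather than conceptual, concentrated at the two boundary indices. At $i=1$ the empty-prefix convention $s_\varepsilon\equiv1$ turns the would-be term $\overline{w_1}u_1 b_0$ into the scalar $T_0(w)(u)$, and this is what forces the restriction to $n\ge1$; at $i=\ell$ there is no shifted partner, so the last coordinate keeps the bare value $u_\ell$. Verifying that the padding-by-$0$ and the shift $0u$ in \eqref{logical-defn} reproduce exactly these two boundary effects — the scalar $t'$ emerging from the first coordinate and the unshifted $u_\ell$ surviving in the last — is the crux of the check.
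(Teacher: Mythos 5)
Your proposal is correct and is essentially the paper's own argument: expand the bracket by its definition, apply \eqref{w0} and \eqref{w1} to each prefix term, and recognise the resulting constant and coefficients as $T_0(w)(u)$ and $S_0(w)(u)$ via \eqref{logical-defn}, with the $2n+1$ case handled symmetrically. The only difference is organisational — you unify the two recurrences with the indicator $\overline{w_i}$ and match coefficients of $b_i$ directly, while the paper splits the sum according to $w_i=0$ or $w_i=1$ and then recombines using the (mod $2$) additivity of the bracket in its lower argument — so no substantive comparison is needed.
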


\begin{proof}
We prove \eqref{S_0-sq}, and \eqref{S_1-sq} follows analogously. From the definition,
\begin{align*}
\begin{bmatrix}
w\\
u
\end{bmatrix}(2n)&=\sum_{\substack{i=1\\\text{with } u_i=1}}^\ell s_{u_1\ldots u_i}(2n)\pmod2\\
&=\sum_{\substack{i=1\\\text{with } u_i=1,\\w_i=1}}^\ell s_{u_1\ldots u_i}(2n)+\sum_{\substack{i=1\\\text{with } u_i=1,\\w_i=0}}^\ell s_{u_1\ldots u_i}(2n)\pmod2\\
&=\sum_{\substack{i=1\\\text{with } u_i=1,\\w_i=1}}^\ell s_{u_1\ldots u_i}(n)+\sum_{\substack{i=1\\\text{with } u_i=1,\\w_i=0}}^\ell s_{u_1\ldots u_{i-1}}(n)\\
&\quad+\sum_{\substack{i=1\\\text{with } u_i=1,\\w_i=0}}^\ell s_{u_1\ldots u_{i}}(n)\pmod2\text{ (using \eqref{w0} and \eqref{w1})}\\
&=\sum_{\substack{i=1\\\text{with } u_i=1}}^\ell s_{u_1\ldots u_i}(n)+\sum_{\substack{i=2\\\text{with } u_i=1,\\w_i=0}}^\ell s_{u_1\ldots u_{i-1}}(n)+\alpha\pmod2\\
&=\begin{bmatrix}
w\\
u
\end{bmatrix}(n)+\begin{bmatrix}
w\\
u'
\end{bmatrix}(n)+\alpha\pmod2,
\end{align*}
where we interpret $u_1\ldots u_{i-1}$ to be $\varepsilon$ for $i=1$, and $\alpha$ and $u'$ are as follows. We set $\alpha=1$ if $w_1=0$ and $u_1=1$, and $\alpha=0$ otherwise. The word $u'=u'_1\ldots u'_\ell\in\{0,1\}^\ell$ is defined as $u'_i=1$ if and only if $1\leq i<\ell$ and $u_{i+1}=1$ and $w_{i+1}=0$. Thus, $u'=(\overline{w}0 \land u0)[2,\ell+1]$ and $\alpha=T_0(w)(u)$. Now \eqref{S_0-sq} follows. 
\end{proof}

Henceforth, let $u,w$ denote words on $\{0,1\}$ with lengths $|u|=|w|=\ell\geq 2$ and $h=h_1\ldots h_r\in\{0,1\}^*$, unless explicitly mentioned otherwise. Define $S_h(w)=S_{h_1}(w)\circ\ldots\circ S_{h_r}(w)$, i.e. the composition of the functions $S_{h_i}(w)$, and $T_{h}(w)=T_{h_1}(w)+\ldots+T_{h_r}(w)$. We take $S_\varepsilon(w)$ to be the identity map and $T_{\varepsilon}(w)=0$.

\begin{lem}\label{observe}
\begin{enumerate}[(a)]
\item $S_h(w)(u\oplus u')=S_h(w)(u)\oplus S_h(w)(u')$ and $T_h(w)(u\oplus u')=T_h(w)(u)\oplus T_h(w)(u')$.

\item Let $u',w'$ be suffixes of $u,w$ respectively with $|u'|=|w'|\geq1$. Then $S_h(w')(u')$ is a suffix of $S_h(w)(u)$. 
\end{enumerate}
\end{lem}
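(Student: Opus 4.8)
The plan is to handle the two parts separately, in each case reducing the assertion to a statement about the single-letter maps $S_a(w)$ and $T_a(w)$ ($a\in\{0,1\}$) and then propagating it through the composition and sum that define $S_h(w)$ and $T_h(w)$.

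For part (a), I would first read off from \eqref{combined} that, for fixed $a$ and $w$, the length-$(\ell+1)$ word $T_a(w)(u)\,S_a(w)(u)$ is produced from $u$ by the rule $u\mapsto\big[(\overline{a}^\ell\oplus w)0\land u0\big]\oplus 0u$. Here the word $(\overline{a}^\ell\oplus w)0$ is independent of $u$, so this map is a composition of the operations ``append a $0$'', ``prepend a $0$'', ``bitwise $\land$ with a fixed word'', and ``bitwise $\oplus$'', each of which commutes with $\oplus$ (distributivity of $\land$ over $\oplus$ letter by letter, together with the obvious compatibility of the paddings). Hence $u\mapsto T_a(w)(u)\,S_a(w)(u)$ is additive with respect to $\oplus$, and since taking the first letter and taking the remaining $\ell$ letters both respect $\oplus$, the maps $S_a(w)$ and $T_a(w)$ are individually $\oplus$-additive. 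Additivity is preserved under composition and under pointwise sum, so $S_h(w)=S_{h_1}(w)\circ\cdots\circ S_{h_r}(w)$ and $T_h(w)=T_{h_1}(w)+\cdots+T_{h_r}(w)$ are additive as well, which is exactly part (a).

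For part (b), the key observation is a \emph{locality} property of the single-letter map: expanding \eqref{combined} letter by letter gives $(S_a(w)(u))_j=\big((\overline{a}\oplus w_{j+1})\land u_{j+1}\big)\oplus u_j$ for $1\le j<\ell$ and $(S_a(w)(u))_\ell=u_\ell$, so the $j$-th letter of $S_a(w)(u)$ depends only on the letters of $u$ and $w$ at positions $\ge j$. Consequently, if $u',w'$ are the length-$k$ suffixes of $u,w$, then evaluating $S_a(w')(u')$ by the same formula reproduces exactly the last $k$ letters of $S_a(w)(u)$; this settles the case $|h|=1$. I would then induct on $r=|h|$: writing $S_h(w)=S_{h_1}(w)\circ S_g(w)$ with $g=h_2\ldots h_r$, the inductive hypothesis makes $S_g(w')(u')$ a suffix of $S_g(w)(u)$, and applying the single-letter case to the input pair $\big(S_g(w)(u),\,S_g(w')(u')\big)$ together with the parameter pair $(w,w')$ shows that $S_{h_1}(w')(S_g(w')(u'))$ is a suffix of $S_{h_1}(w)(S_g(w)(u))$, completing the induction.

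I expect the only delicate point to lie in part (b): one must restrict the parameter word $w$ to its suffix $w'$ at the same time as the input $u$, so the single-letter base case has to be formulated for the two restrictions simultaneously, and the appended $0$ (hence the special behaviour of the last letter) must be tracked carefully so that computing the suffix never consults a position lying outside $u'$ or $w'$. Part (a), by contrast, is routine once the defining map is recognized as a composition of $\oplus$-additive operations.
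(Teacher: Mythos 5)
Your proof is correct and follows essentially the same route as the paper: reduce both claims to the single-letter case $h\in\{0,1\}$ (via preservation under composition/pointwise sum for (a) and induction on $|h|$ for (b)) and then read the statements off the defining formula \eqref{combined}. The paper simply states this reduction and asserts the single-letter case "follows at once", whereas you spell out the letterwise distributivity and locality details it leaves implicit.
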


\begin{proof}
It suffices to prove both claims for the case when $h\in\{0,1\}$ is a letter. In this case, the claims follow at once from \eqref{combined}. 
\end{proof}

\begin{lem}\label{permutations}
$S_h(w):\{0,1\}^\ell\to\{0,1\}^\ell$ is a bijection.
\end{lem}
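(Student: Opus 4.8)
The plan is to reduce to the single-letter maps and then exploit a triangular structure to invert them explicitly. Because $S_h(w)=S_{h_1}(w)\circ\cdots\circ S_{h_r}(w)$ is a composition of the maps $S_a(w)$ with $a\in\{0,1\}$ (and $S_\varepsilon(w)$ is the identity), and a composition of bijections is again a bijection, it suffices to prove that each single-letter map $S_a(w)\colon\{0,1\}^\ell\to\{0,1\}^\ell$ is a bijection. Moreover, by \Cref{observe}(a) the map $S_a(w)$ is $\oplus$-additive, hence $\mathbb{F}_2$-linear on the finite space $\{0,1\}^\ell$; consequently it is a bijection as soon as it is injective, i.e. as soon as its kernel is trivial.

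The first real step is to unwind the compact definition \eqref{combined} into a coordinate-wise formula. Writing $z=\overline{a}^\ell\oplus w$ (so that $z=\overline{w}$ when $a=0$ and $z=w$ when $a=1$, matching \eqref{logical-defn}), the right-hand side of \eqref{combined} is a word of length $\ell+1$. Comparing the letters of $(z0\land u0)\oplus 0u$ position by position yields $T_a(w)(u)=z_1\land u_1$ together with
$$S_a(w)(u)_i=u_i\oplus\left(z_{i+1}\land u_{i+1}\right)\ \ (1\le i\le\ell-1),\qquad S_a(w)(u)_\ell=u_\ell.$$
The shift by one position comes from the summand $0u$, and the final coordinate survives untouched because $u0$ and $0u$ place $u_\ell$ in different positions. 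The decisive structural feature is thus that $S_a(w)$ fixes the last coordinate and writes every other coordinate of the image as $u_i$ plus a term depending only on $u_{i+1}$; in matrix terms over $\mathbb{F}_2$ this is a unipotent upper-bidiagonal map.

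Given this shape, the conclusion is immediate. If $S_a(w)(u)=0^\ell$, then $u_\ell=0$, and the coordinate formula gives $u_i=z_{i+1}\land u_{i+1}$ for each $i<\ell$, so a downward induction on $i$ forces $u=0^\ell$. Hence the kernel is trivial and $S_a(w)$ is a bijection; equivalently, the same relations recover $u$ from $v=S_a(w)(u)$ by back-substitution ($u_\ell=v_\ell$ and $u_i=v_i\oplus(z_{i+1}\land u_{i+1})$ for $i=\ell-1,\ldots,1$), exhibiting an explicit two-sided inverse.

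I expect the only genuine obstacle to be the careful bookkeeping in extracting the coordinate-wise formula from \eqref{combined}: one must track how the $\land$ with the shifted word $z0$ and the $\oplus$ with $0u$ interact across all $\ell+1$ positions, and in particular confirm that the last coordinate is literally $u_\ell$. Once this triangular shape is established, invertibility of each $S_a(w)$ is automatic, and the reduction to compositions finishes the proof.
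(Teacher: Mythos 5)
Your proposal is correct and follows essentially the same route as the paper: reduce to a single letter $h\in\{0,1\}$, read off from \eqref{combined} that the last coordinate is fixed and each earlier coordinate is $u_i$ plus a term depending only on $u_{i+1}$, and invert by back-substitution (the paper phrases this as recursively solving $u_\ell=u'_\ell$, $u_i=[(\overline{h}\oplus w_{i+1})\land u_{i+1}]\oplus u'_i$, which is exactly your explicit inverse). Your additional framing via $\oplus$-additivity and a trivial kernel is a harmless repackaging of the same triangular structure.
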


\begin{proof}
It suffices to prove the claim for the case when $h\in\{0,1\}$. Notice that for any $a,x\in\{0,1\}$,
\begin{align}\label{solve-xor}
x=a\oplus (a\oplus x).
\end{align}
In particular, $x$ is uniquely determined once $a$ and $a\oplus x$ are given. Now, fix $u'=u'_1\ldots u'_\ell\in\{0,1\}^{\ell}$. We will solve for a unique word $u=u_1\ldots u_\ell\in\{0,1\}^\ell$ such that $S_h(w)(u)=u'$. By \eqref{combined}, we have $u'_\ell=(0\land0)\oplus u_\ell$ and
$$u'_{i}=[(\overline{h}\oplus w_{i+1})\land u_{i+1}]\oplus u_i\text{ for $1\leq i\leq \ell-1$}.$$
Hence \eqref{solve-xor} yields
\begin{align*}
u_\ell&=u'_\ell;\\
u_i&=[(\overline{h}\oplus w_{i+1})\land u_{i+1}]\oplus u'_i\text{ for }1\leq i\leq \ell-1.
\end{align*}
Hence, $u_i$ can be solved for recursively. 
\end{proof}

\newcommand{\calO}{\mathcal{O}}

Let $\calO_w(u)$ be the orbit of $u$ under the group of permuations of $\{0,1\}^\ell$ generated by $S_0(w)$ and $S_1(w)$. Explicitly,
$$\calO_w(u)=\left\{S_h(w)(u)\mid h\in\{0,1\}^\ast\right\}.$$

We give special emphasis to the case $u=0^{\ell-1}1$ by defining $\calO_{w}:=\calO_w(0^{\ell-1}1)$.

\begin{lem}\label{last-letter}
If $u$ has suffix $10^k$ ($k\geq0$) then every word in $\calO_w(u)$ has suffix $10^k$. In particular, every element of $\calO_{w}$ has $1$ as a suffix. 
\end{lem}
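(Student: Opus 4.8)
The plan is to reduce to the action of a single generator and then perform an explicit local computation. Since $\calO_w(u)=\{S_h(w)(u)\mid h\in\{0,1\}^\ast\}$ and $S_h(w)=S_{h_1}(w)\circ\cdots\circ S_{h_r}(w)$, it suffices by induction on $r=|h|$ to prove the statement for a single generator: if $u$ has suffix $10^k$ and $a\in\{0,1\}$, then $S_a(w)(u)$ also has suffix $10^k$. Granting this, applying $S_{h_r}(w),\ldots,S_{h_1}(w)$ one letter at a time keeps the suffix $10^k$ at every stage, so the composite $S_h(w)(u)$ inherits it.

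For the single-letter step I would use the explicit description of $S_a(w)$ extracted from \eqref{combined} (the same formula that appears in the proof of \Cref{permutations}): writing $u'=S_a(w)(u)$, one has $u'_\ell=u_\ell$ and $u'_i=\left[(\overline{a}\oplus w_{i+1})\land u_{i+1}\right]\oplus u_i$ for $1\le i\le\ell-1$. The structural feature driving the proof is that $u'_i$ depends only on $u_i$, $u_{i+1}$, and $w_{i+1}$; in particular the trailing $k+1$ letters of $u'$ are determined by the trailing $k+1$ letters of $u$. This locality is exactly \Cref{observe}(b) applied to the length-$(k+1)$ suffixes of $u$ and $w$, so one could alternatively phrase the whole argument as showing $S_a(w')(10^k)=10^k$ for every $w'\in\{0,1\}^{k+1}$.

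Now suppose $u$ has suffix $10^k$ with $k\ge1$, so $u_{\ell-k}=1$ and $u_{\ell-k+1}=\cdots=u_\ell=0$. I would read off the last $k+1$ letters of $u'$ directly from the formula. For each position $i$ strictly inside the run of zeros, the term $(\overline{a}\oplus w_{i+1})\land u_{i+1}$ vanishes because $u_{i+1}=0$, whence $u'_i=u_i=0$; at the boundary position $i=\ell-k$ the same term again vanishes (here $u_{\ell-k+1}=0$), so $u'_{\ell-k}=u_{\ell-k}=1$; and $u'_\ell=u_\ell=0$. Thus $u'$ has suffix $10^k$, independently of $w$. The degenerate case $k=0$ is handled at once by $u'_\ell=u_\ell$, which shows the final letter is preserved. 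The ``in particular'' clause is then the special case $u=0^{\ell-1}1$, whose suffix is $10^0=1$.

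The computation is routine, and there is no genuine obstacle beyond bookkeeping. The two points deserving care are the boundary position $i=\ell-k$, where one must observe that the $\land$ term dies as well (because the letter immediately to its right already lies in the zero run), rather than only checking the interior of the run; and the edge case $k=0$, where ``suffix $10^k$'' degenerates to the single final letter and the identity $u'_\ell=u_\ell$ suffices.
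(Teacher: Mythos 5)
Your proof is correct and follows essentially the same route as the paper: the paper invokes the suffix-locality of $S_h$ (\Cref{observe}) together with the observation that $S_0(w')(10^k)=S_1(w')(10^k)=10^k$ for the length-$(k+1)$ suffix $w'$ of $w$, which is exactly the fixed-window computation you carry out explicitly from \eqref{combined}, one generator at a time. Your letter-by-letter check (including the boundary position $i=\ell-k$ and the $k=0$ case) simply spells out what the paper leaves as an ``observe,'' so there is nothing to add.
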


\begin{proof}
Let $w'$ be the suffix of $w$ of length $k+1$. Observe that $S_0(w')(10^k)=S_1(w')(10^k)=10^k$. The claim now follows from (a) of \Cref{observe}. 
\end{proof}

\begin{cor}\label{matrix-size-bound}
$|\calO_w(u)|\leq 2^{\ell-1}$.
\end{cor}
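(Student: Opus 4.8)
The plan is to obtain the bound directly from \Cref{last-letter} by counting the words that share a prescribed suffix. First I would treat the degenerate case $u=0^\ell$ separately. By the additivity recorded in part~(a) of \Cref{observe}, setting $u=u'=0^\ell$ gives $S_h(w)(0^\ell)=S_h(w)(0^\ell)\oplus S_h(w)(0^\ell)=0^\ell$ for every $h$, so $\calO_w(0^\ell)=\{0^\ell\}$ is a singleton and the bound holds trivially (recall $\ell\geq2$).

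For the main case $u\neq 0^\ell$, let $k$ denote the number of trailing zeros of $u$, so that $u$ ends in the suffix $10^k$ with $0\leq k\leq \ell-1$. \Cref{last-letter} then guarantees that every word in $\calO_w(u)$ likewise ends in $10^k$. Since a length-$\ell$ word ending in the fixed suffix $10^k$ occupies its last $k+1$ positions with that suffix and is otherwise determined freely by its first $\ell-1-k$ letters, there are exactly $2^{\ell-1-k}$ such words. Hence $|\calO_w(u)|\leq 2^{\ell-1-k}\leq 2^{\ell-1}$, which is the desired inequality.

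I expect no substantive obstacle here: the corollary is a straightforward counting consequence of the suffix-invariance already established in \Cref{last-letter}. The only delicate point is that the all-zeros word admits no suffix of the form $10^k$, and so must be excluded from the suffix argument and handled by the short computation above; once that case is isolated, the remaining estimate is pure counting.
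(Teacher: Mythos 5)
Your proof is correct and follows essentially the same route as the paper, which simply cites \Cref{last-letter}: the orbit lies among the length-$\ell$ words sharing the suffix $10^k$ of $u$, of which there are at most $2^{\ell-1}$. Your explicit handling of the degenerate case $u=0^\ell$ (where $S_h(w)(0^\ell)=0^\ell$, so the orbit is a singleton) is a point the paper leaves implicit, and it is a correct and welcome precaution.
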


\begin{proof}
This is a direct consequence of \Cref{last-letter}. 
\end{proof}

\begin{lem}\label{prefix}
Suppose $u$ and $h$ satisfy the following.
\begin{enumerate}[(a)]
    \item  $u$ has prefix $0^{k-1}$ for some $k \leq \ell$.
\item $|h|<k$.
\end{enumerate}
Then $S_h(w)(u)$ has prefix $0^{k-|h|-1}$. In particular, if $|h|<\ell$, then $S_h(w)(0^{\ell-1}1)$ has prefix $0^{\ell-|h|-1}$.
\end{lem}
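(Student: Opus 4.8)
The plan is to reduce to the single-letter case $|h|=1$ and then iterate. The engine is the explicit formula for one generator already recorded in the proof of \Cref{permutations}: for a letter $a\in\{0,1\}$, writing $u'=S_a(w)(u)$, \eqref{combined} gives $u'_\ell=u_\ell$ and
$$u'_i=\left[(\overline{a}\oplus w_{i+1})\land u_{i+1}\right]\oplus u_i\qquad(1\leq i\leq\ell-1).$$
The feature to exploit is that $u'_i$ depends only on the two coordinates $u_i$ and $u_{i+1}$.

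First I would dispose of the case $|h|=1$. Suppose $u$ has prefix $0^{k-1}$ with $k\leq\ell$; the hypothesis $|h|<k$ forces $k\geq2$. For each index $i$ with $1\leq i\leq k-2$ one has $i\leq\ell-1$ and $u_i=u_{i+1}=0$, so the displayed formula yields $u'_i=0$. Hence $S_a(w)(u)$ has prefix $0^{k-2}=0^{k-|h|-1}$; in words, applying one generator shortens the guaranteed all-zero prefix by one letter.

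Next I would induct on $r=|h|$. The base case $r=0$ is trivial, since $S_\varepsilon(w)$ is the identity and the conclusion $0^{k-1}$ is exactly the hypothesis. For the inductive step write $h=h_1h'$ with $h'=h_2\ldots h_r$, so $S_h(w)(u)=S_{h_1}(w)\bigl(S_{h'}(w)(u)\bigr)$. As $|h'|=r-1<k$, the inductive hypothesis shows $v:=S_{h'}(w)(u)$ has prefix $0^{k-(r-1)-1}$, i.e. prefix $0^{k'-1}$ with $k'=k-r+1$. I then invoke the single-letter case on $v$ with parameter $k'$: here $k'\leq k\leq\ell$, and $k'\geq2$ precisely because $r=|h|<k$. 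This produces the prefix $0^{k'-2}=0^{k-r-1}=0^{k-|h|-1}$, closing the induction. The final "in particular" assertion is then the special case $u=0^{\ell-1}1$, $k=\ell$.

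There is no serious obstacle beyond index bookkeeping; the one thing to watch is that the running parameter $k'$ stays in the admissible range $2\leq k'\leq\ell$ at every step, so that the single-letter reduction may legitimately be applied. This is exactly where the strict inequality $|h|<k$ is needed: it keeps $k'\geq2$, i.e. guarantees that the all-zero prefix has not yet been exhausted. Were $|h|=k$ allowed, the prefix could disappear entirely and the reduction would stall.
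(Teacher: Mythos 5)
Your proof is correct. It shares the paper's skeleton---reduce to a single letter $h\in\{0,1\}$ and induct on $|h|$---but it handles the single-letter step by a different mechanism: you read off from the coordinate recurrence in the proof of \Cref{permutations} (equivalently from \eqref{combined}) that $u'_i$ depends only on $u_i$ and $u_{i+1}$, so an all-zero prefix can shrink by at most one position per application of a generator. The paper instead decomposes $u$ by XOR-linearity ((a) of \Cref{observe}) into the weight-one words $0^{i-1}u_i0^{\ell-i}$ with $i\geq k$ and applies the explicit identities \eqref{s_w} describing the action of $S_a(w)$ on such words, each term then retaining the required zero prefix. Your local-coordinate argument is more elementary and self-contained, needing neither \Cref{observe} nor \eqref{s_w}; the paper's route has the side benefit that the identities \eqref{s_w} are established and reused elsewhere (for instance in \Cref{matrix-size-bound-1}). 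Your bookkeeping of the running parameter $k'$ and your remark on why the strict inequality $|h|<k$ is needed are accurate.
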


\begin{proof}
It suffices to prove the claim when $h\in\{0,1\}$ is a letter, since the general claim follows by induction on $|h|$. For every integer $i$ such that $1 \leq i \leq \ell$, one has
\begin{align}\label{s_w}
S_{w_1}(w)(10^{\ell-1})&=S_{\overline{w_1}}(w)(10^{\ell-1})=10^{\ell-1};\nonumber\\
S_{w_i}(w)(0^{i-1}10^{\ell-i})&=0^{i-2}110^{\ell-i}\;\; \text{for }2 \leq i \leq \ell;\nonumber\\
S_{\overline{w_i}}(w)(0^{i-1}10^{\ell-i})&=0^{i-1}10^{\ell-i}\;\; \text{for }2 \leq i \leq \ell.
\end{align}
Thus, using (a) of \Cref{observe} yields the following.
\begin{align*}
S_h(w)(u)&=S_h(w)(0^{k-1}u_{k}0^{\ell-k}\oplus0^{k}u_{k+1}0^{\ell-k-1}\oplus \ldots \oplus 0^{\ell-1}u_{\ell})\\
&=S_h(w)(0^{k-1}u_{k}0^{\ell-k})\oplus S_h(w)(0^{k}u_{k+1}0^{\ell-k-1})\oplus \ldots \oplus S_h(w)(0^{\ell-1}u_{\ell}).
\end{align*}
Each term in the above has prefix $0^{k-1}$ (by \eqref{s_w}), so the claim follows. 
\end{proof}

\begin{cor}\label{matrix-size-bound-1}
$|\calO_{w}|\geq \ell$.
\end{cor}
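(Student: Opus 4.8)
The plan is to exhibit $\ell$ distinct elements of $\calO_w$, distinguished by the position of their leftmost $1$. Throughout I would work over $\mathbb{F}_2$, identifying a word $v=v_1\ldots v_\ell$ with the vector $\bigoplus_i v_i e_i$, where $e_i=0^{i-1}10^{\ell-i}$; by part (a) of \Cref{observe} each generator $S_a(w)$ (with $a\in\{0,1\}$) is then additive, and its action on the basis is read off from \eqref{s_w}: for $2\le i\le\ell$ one has $S_{w_i}(w)(e_i)=e_{i-1}\oplus e_i$ and $S_{\overline{w_i}}(w)(e_i)=e_i$, while both generators fix $e_1$. In particular, applying a single generator sends each $e_j$ to a vector supported on $\{j-1,j\}$.

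First I would isolate the key one-step statement: if a word $v$ has its leftmost $1$ at position $p$ with $2\le p\le\ell$, then $S_{w_p}(w)(v)$ has its leftmost $1$ at position $p-1$. To see this, write $v=e_p\oplus v'$, where $v'$ is supported on positions $p+1,\ldots,\ell$. Additivity gives $S_{w_p}(w)(v)=(e_{p-1}\oplus e_p)\oplus S_{w_p}(w)(v')$. Since each $e_j$ with $j>p$ is sent by $S_{w_p}(w)$ to either $e_j$ or $e_{j-1}\oplus e_j$ with $j-1\ge p$, the vector $S_{w_p}(w)(v')$ is supported on positions $\ge p$, and in particular vanishes in coordinate $p-1$. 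Hence coordinate $p-1$ of $S_{w_p}(w)(v)$ equals $1$ while all coordinates to its left vanish, which is the claim.

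Then I would iterate. Starting from $u^{(0)}=0^{\ell-1}1=e_\ell$, whose leftmost $1$ is at position $\ell$, define $u^{(k)}=S_{w_{\ell-k+1}}(w)\bigl(u^{(k-1)}\bigr)$ for $1\le k\le\ell-1$. At each step the current word has its leftmost $1$ at position $\ell-k+1\ge 2$, so the one-step statement applies and moves that leftmost $1$ one place to the left; by induction $u^{(k)}$ has its leftmost $1$ exactly at position $\ell-k$. Every $u^{(k)}$ lies in $\calO_w$ by construction, and the $\ell$ words $u^{(0)},\ldots,u^{(\ell-1)}$ have pairwise distinct leftmost-$1$ positions $\ell,\ell-1,\ldots,1$, hence are pairwise distinct. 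Therefore $|\calO_w|\ge\ell$.

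The only delicate point is the one-step statement, and within it the verification that applying $S_{w_p}(w)$ to the tail $v'$ cannot create a $1$ strictly to the left of position $p-1$; this is precisely where the ``shift by at most one place to the left'' structure of the generators, encoded in \eqref{s_w}, is essential, and it is consistent with the leading-zero count guaranteed by \Cref{prefix} (a word reached by $k$ generators has at least $\ell-k-1$ leading zeros). Everything else reduces to additivity and a short induction.
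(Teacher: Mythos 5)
Your proof is correct and is essentially the paper's own argument: the words $u^{(k)}$ you build by successively applying $S_{w_\ell}(w), S_{w_{\ell-1}}(w),\ldots$ are exactly the paper's $z_i=S_{w_{\ell-i+1}\ldots w_\ell}(w)(0^{\ell-1}1)$, and your one-step lemma just spells out the induction (via \eqref{s_w} and additivity) that the paper leaves implicit when asserting $z_i$ has prefix $0^{\ell-i-1}1$.
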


\begin{proof}
Set $z_{0}=0^{\ell-1}1$ and $z_i=S_{w_{\ell-i+1}\ldots w_\ell}(w)(0^{\ell-1}1)$ for $1\leq i<\ell$. From \eqref{s_w}, one can prove by induction on $i$ that $z_i$ has prefix $0^{\ell-i-1}1$ for $0\leq i<\ell$.  Hence the $z_i$'s are $\ell$ distinct elements of $\calO_{w}$. 
\end{proof}

We now define three $|\calO_w(u)|\times|\calO_w(u)|$ matrices $M_0(w,u), M_1(w,u)$ and $M(w,u)$ whose rows and columns are indexed by elements of $\calO_w(u)$. For $u',u''\in\calO_w(u)$, we will write $M(w,u)[u',u'']$ to refer to the entry of $M(w,u)$ indexed by the pair $(u',u'')\in\calO_w(u)\times\calO_w(u)$.
\begin{align*}
M_0(w,u)[u',u'']&:=\begin{cases}
(-1)^{T_0(w)(u')} & \text{if }S_0(w)(u')=u'';\\
0 & \text{otherwise}.
\end{cases}\\
M_1(w,u)[u',u'']&:=\begin{cases}
(-1)^{T_1(w)(u')} & \text{if }S_1(w)(u')=u'';\\
0 & \text{otherwise}.
\end{cases}\\
M(w,u)&:=M_0(w,u)+M_1(w,u).
\end{align*}
Following are some simple but important observations regarding these matrices.

\begin{cor}\label{basic-props}
The following hold.
\begin{enumerate}[(a)]
\item $M_0(w,u)$ and $M_1(w,u)$ each have exactly one non-zero entry in each row and each column.
\item $\|M_0(w,u)v\|=\|M_1(w,u)v\|=\|v\|\,\,\forall\,v\in\C^{|\calO_w(u)|}$.
\end{enumerate}
\end{cor}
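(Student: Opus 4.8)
The plan is to show that $M_0(w,u)$ and $M_1(w,u)$ are each \emph{signed permutation matrices} — matrices with exactly one nonzero entry, of modulus $1$, in every row and every column — from which both parts follow immediately. The one structural fact I would isolate first is that the orbit $\calO_w(u)$ is closed under $S_0(w)$ and $S_1(w)$: if $u'=S_h(w)(u)$ for some $h\in\{0,1\}^\ast$, then for $a\in\{0,1\}$ we have $S_a(w)(u')=S_{ah}(w)(u)\in\calO_w(u)$, since $ah\in\{0,1\}^\ast$. Combining this closure with \Cref{permutations}, which says each $S_a(w)$ is a bijection of all of $\{0,1\}^\ell$, shows that the restriction of $S_a(w)$ to the finite set $\calO_w(u)$ is an injection of $\calO_w(u)$ into itself, hence a bijection of $\calO_w(u)$ onto itself.

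For part (a), I would then simply read off the definition of $M_0(w,u)$. In the row indexed by $u'\in\calO_w(u)$, the only possibly nonzero entry is in the column indexed by $u''=S_0(w)(u')$, which is a well-defined element of $\calO_w(u)$; so there is exactly one nonzero entry per row, equal to $(-1)^{T_0(w)(u')}\in\{\pm1\}$. Dually, in the column indexed by $u''$, the unique nonzero entry sits in the row $u'$ determined by $S_0(w)(u')=u''$, which exists and is unique because $S_0(w)$ restricts to a bijection of $\calO_w(u)$; so there is exactly one nonzero entry per column. The identical argument applies to $M_1(w,u)$ with $S_1(w)$ and $T_1(w)$ in place of $S_0(w)$ and $T_0(w)$.

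For part (b), having identified $M_0(w,u)$ as a signed permutation matrix, I would compute the norm directly. For $v\in\C^{|\calO_w(u)|}$, the component of $M_0(w,u)v$ indexed by $u''$ equals $(-1)^{T_0(w)(u')}v_{u'}$, where $u'$ is the unique preimage with $S_0(w)(u')=u''$. Taking squared moduli and summing, the signs disappear and the sum reindexes along the bijection $u''\mapsto u'$ of $\calO_w(u)$, giving $\|M_0(w,u)v\|^2=\sum_{u'}|v_{u'}|^2=\|v\|^2$; the computation for $M_1(w,u)$ is the same.

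There is no real obstacle in this corollary: everything rests on \Cref{permutations} together with the closure of $\calO_w(u)$ under the maps $S_a(w)$, both already available. The only point deserving a moment's care is the passage from ``$S_a(w)$ is injective on $\calO_w(u)$'' to ``$S_a(w)$ is a bijection of $\calO_w(u)$,'' which I would justify by the finiteness of the orbit (an injective self-map of a finite set is surjective); this is what guarantees that $S_a(w)^{-1}$ is defined on $\calO_w(u)$ and keeps the index set inside $\calO_w(u)$, as needed for the column count in (a) and the reindexing in (b).
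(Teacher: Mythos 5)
Your proposal is correct and follows exactly the paper's route: part (a) from \Cref{permutations} together with the closure of $\calO_w(u)$ under $S_0(w)$ and $S_1(w)$, and part (b) as a direct consequence of (a) via the signed-permutation structure. The paper leaves these steps as one-line remarks; you have merely spelled out the same argument in full detail, including the finiteness point needed to pass from injectivity to bijectivity on the orbit.
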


\begin{proof}
\begin{enumerate}[(a)]
\item Follows from \Cref{permutations} and the fact that $\calO_w(u)$ is closed under applications of $S_0(w)$ and $S_1(w)$.\\

\item Follows from (a). \qedhere
\end{enumerate}
\end{proof}

For $n\geq0$, let $v(w,u)(n)$ denote the $|\calO_w(u)|$-dimensional column vector, with entries indexed by $\calO_w(u)$, such that the entry in the $u'$-th row is
$$(-1)^{\begin{bmatrix}
w\\
u'
\end{bmatrix}(n)}.$$
Let $V(w,u)(N)=\sum\limits_{n=0}^N v(w,u)(n)$. As before, we place emphasis on the case of $u=0^{\ell-1}1$ by defining $M_0(w)=M_0(w,0^{\ell-1}1)$, $M_1(w)=M_1(w,0^{\ell-1}1)$, $M(w)=M(w,0^{\ell-1}1)$, $v(w)=v(w,0^{\ell-1}1)$, and $V(w)=V(w,0^{\ell-1}1)$.

\begin{exa}
Let $w=011$ as in \Cref{ex-011} and let $u=001$. One can check $\calO_w=\calO_w(u)=\{001,011, 101, 111\}$. Viewing $001,011,101,111$ as an ordered basis of $\C^4$ (under the convention of indexation by $\calO_w(u)$), the matrix
$$M(011,001)=\begin{bmatrix}
1 & 1 & 0 & 0\\
0 & 1 & 1 & 0\\
0 & 0 & -1 & 1\\
1 & 0 & 0 & -1
\end{bmatrix}$$
is the same as $M$ from \Cref{ex-011}. Furthermore,
$$v(011,011)(n)=\begin{bmatrix}
(-1)^{s_{001}(n)}\\
(-1)^{s_{01}(n)+s_{001}(n)}\\
(-1)^{s_{0}(n)+s_{001}(n)}\\
(-1)^{s_{0}(n)+s_{01}(n)+s_{001}(n)}
\end{bmatrix},$$
so $V(011,001)(N)$ is precisely $v_N$ from \Cref{ex-011}.
\end{exa}


\begin{thm}\label{matrix-recurrence}
For $n\geq1$ and $i\in\{0,1\}$, we have
$$v(w,u)(2n+i)=M_i(w,u)\,v(w,u)(n).$$
Consequently, for $N\geq0$ we have
$$V(w,u)(2N+1)=M(w,u)\,V(w,u)(N)+c(w,u),$$
where $c(w,u)$ is given by
$$c(w,u):=\left(I-M(w,u)\right)v(w,u)(0)+v(w,u)(1),$$
with $I$ the identity matrix of size $|\calO_w(u)|\times|\calO_w(u)|$.
\end{thm}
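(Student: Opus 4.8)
The plan is to obtain the per-term recurrence as an immediate entrywise consequence of the lemma \eqref{S_0-sq}--\eqref{S_1-sq}, and then to derive the partial-sum recurrence by splitting $V(w,u)(2N+1)$ according to the parity of the summation index.

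First I would verify the identity $v(w,u)(2n+i)=M_i(w,u)\,v(w,u)(n)$ for $n\geq1$ by comparing entries indexed by $u'\in\calO_w(u)$. The $u'$-th entry of the left-hand side is $(-1)^{\begin{bmatrix}w\\u'\end{bmatrix}(2n+i)}$, which by \eqref{S_0-sq}--\eqref{S_1-sq} equals $(-1)^{T_i(w)(u')}(-1)^{\begin{bmatrix}w\\S_i(w)(u')\end{bmatrix}(n)}$. On the right-hand side, the matrix $M_i(w,u)$ has in its $u'$-th row a single nonzero entry, namely $(-1)^{T_i(w)(u')}$ in the column indexed by $S_i(w)(u')$ (which lies in $\calO_w(u)$ since the orbit is closed under $S_i(w)$). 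Hence the $u'$-th entry of $M_i(w,u)\,v(w,u)(n)$ is precisely $(-1)^{T_i(w)(u')}$ times the $S_i(w)(u')$-th entry of $v(w,u)(n)$, i.e. the same product. The two entries agree for every $u'$, establishing the first identity.

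For the second part I would split the partial sum by parity,
$$V(w,u)(2N+1)=\sum_{n=0}^{N}v(w,u)(2n)+\sum_{n=0}^{N}v(w,u)(2n+1),$$
peel off the $n=0$ terms in each sum, and apply the first identity to the remaining $n\geq1$ terms. This yields
$$V(w,u)(2N+1)=v(w,u)(0)+v(w,u)(1)+\bigl(M_0(w,u)+M_1(w,u)\bigr)\sum_{n=1}^{N}v(w,u)(n).$$
Substituting $M(w,u)=M_0(w,u)+M_1(w,u)$ and $\sum_{n=1}^{N}v(w,u)(n)=V(w,u)(N)-v(w,u)(0)$ and collecting terms gives $M(w,u)V(w,u)(N)+(I-M(w,u))v(w,u)(0)+v(w,u)(1)$, which is the asserted formula with $c(w,u)$ as defined.

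I expect the only delicate point to be the treatment of the base index $n=0$: the lemma \eqref{S_0-sq}--\eqref{S_1-sq} is stated only for $n\geq1$, so the $n=0$ summands $v(w,u)(0)$ and $v(w,u)(1)$ cannot be absorbed into the matrix action and must be carried explicitly. This is exactly what forces the inhomogeneous constant, and the correction $(I-M(w,u))v(w,u)(0)$ measures the gap between the genuine contribution $v(w,u)(0)$ to the even sum and the value $M(w,u)v(w,u)(0)$ that the homogeneous recurrence would predict. Beyond this bookkeeping there is no real obstacle, since the structural content — that $S_i(w)$ permutes $\calO_w(u)$ and governs the parity shift — has already been packaged in the earlier lemmas.
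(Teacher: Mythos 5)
Your proof is correct and follows essentially the same route as the paper: the per-term identity is read off entrywise from \eqref{S_0-sq}--\eqref{S_1-sq} together with the definition of $M_i(w,u)$, and the partial-sum recurrence comes from splitting by parity, peeling off the $n=0$ terms, and rewriting $\sum_{n=1}^{N}v(w,u)(n)$ as $V(w,u)(N)-v(w,u)(0)$, which is exactly how the constant $c(w,u)=(I-M(w,u))v(w,u)(0)+v(w,u)(1)$ arises in the paper as well.
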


\begin{proof}
For $n\geq1$ and $i\in\{0,1\}$, \eqref{S_0-sq} and \eqref{S_1-sq} yield
$$(-1)^{\begin{bmatrix}
w\\
u
\end{bmatrix}(2n+i)}=(-1)^{\begin{bmatrix}
w\\
S_i(w)(u)
\end{bmatrix}(n)}(-1)^{T_i(w)(u)}.$$
The first equation in the theorem now follows. To prove the second equation, we use the first equation as follows. For convenience, let $M=M(w,u)$, $M_0=M(w,u)$, $M_1=M(w,u)$, $V=V(w,u)$, and $v=v(w,u)$.
\begin{align*}
V(2N+1)&=\sum_{n=0}^{2N+1}v(n)\\
&=v(0)+v(1)+\sum_{n=1}^Nv(2n)+\sum_{n=1}^Nv(2n+1)\\
&=v(0)+v(1)+\sum_{n=1}^NM_0v(n)+\sum_{n=1}^NM_1v(n)\\
&=v(0)+v(1)+\sum_{n=1}^NMv(n)\\
&=v(0)+v(1)-Mv(0)+\sum_{n=0}^NMv(n)\\
&=MV(N)+(I-M)v(0)+v(1).\hspace{10pt}\qedhere
\end{align*}
\end{proof}

\begin{thm}\label{eval-growth}
If $|\lambda|<2$ for all eigenvalues $\lambda$ of $M(w,u)$, then there exists $\epsilon>0$ such that
$$\|V(w,u)(N)\|= O\left(N^{1-\epsilon}\right)$$
Consequently, for all $u'\in\calO_w(u)$,
$$\left|\sum_{n=0}^N(-1)^{\begin{bmatrix}
w\\
u'
\end{bmatrix}(n)}\right|= O\left(N^{1-\epsilon}\right).$$
In particular, if $u=0^{\ell-1}1$, then $w$ satisfies \emph{Property P}.
\end{thm}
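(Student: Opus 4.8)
The plan is to turn the affine recurrence of \Cref{matrix-recurrence} into a growth estimate for $\|V(w,u)(N)\|$ and then read off the scalar bounds coordinatewise. Throughout I abbreviate $M=M(w,u)$, $V=V(w,u)$, $v=v(w,u)$, $c=c(w,u)$, and set $d=|\calO_w(u)|$. The one structural fact I would isolate first is that every entry of $v(n)$ is $\pm1$, so $\|v(n)\|=\sqrt d$ is a constant independent of $n$; this is exactly what makes the bounded correction terms below harmless.

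First I would reduce the recurrence at odd arguments to one relating $V(N)$ and $V(\lfloor N/2\rfloor)$. Writing $m=\lfloor N/2\rfloor$, the index $2m+1$ equals $N$ when $N$ is odd and $N+1$ when $N$ is even, so in either case $V(N)=V(2m+1)+e_N$ with $e_N\in\{0,-v(2m+1)\}$ and hence $\|e_N\|\leq\sqrt d$. Combining this with \Cref{matrix-recurrence} gives, for all $N\geq1$,
$$V(N)=M\,V(\lfloor N/2\rfloor)+r_N,\qquad \|r_N\|\leq\|c\|+\sqrt d=:c'.$$
Iterating $k$ times along $N_0=N$, $N_{j+1}=\lfloor N_j/2\rfloor$ yields
$$V(N)=M^{k}V(N_k)+\sum_{j=0}^{k-1}M^{j}r_{N_j},$$
and choosing $k=\lceil\log_2(N+1)\rceil$ makes $N_k=0$, so $\|V(N_k)\|=\|v(0)\|$ is bounded. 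Note the recurrence $V(2m+1)=MV(m)+c$ is valid for every $m\geq0$ by \Cref{matrix-recurrence}, so all intermediate steps (down to $N_k=0$) are legitimate.

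Next I would bring in the eigenvalue hypothesis. Let $\rho<2$ be the spectral radius of $M$ and fix $\rho'$ with $\max(1,\rho)<\rho'<2$, which is possible precisely because $\rho<2$. By the Jordan canonical form of $M$ (equivalently, Gelfand's spectral radius formula) there is a constant $C$ with $\|M^{j}\|\leq C(\rho')^{j}$ for all $j\geq0$. Since $\rho'>1$, the geometric sum $\sum_{j=0}^{k-1}(\rho')^{j}$ is $O((\rho')^{k})$, so both terms in the displayed identity are $O((\rho')^{k})$. Because $k\leq\log_2(N+1)+1$, we get $(\rho')^{k}=O\!\left(N^{\log_2\rho'}\right)$, and setting $\epsilon:=1-\log_2\rho'>0$ gives $\|V(w,u)(N)\|=O(N^{1-\epsilon})$.

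The remaining assertions are then immediate. For any $u'\in\calO_w(u)$ the $u'$-coordinate of $V(w,u)(N)$ is exactly $\sum_{n=0}^N(-1)^{\begin{bmatrix}w\\u'\end{bmatrix}(n)}$, and its absolute value is at most $\|V(w,u)(N)\|$, which yields the coordinatewise bound. In particular, taking $u=0^{\ell-1}1$ and reading off the coordinate $u'=0^{\ell-1}1\in\calO_w$, the defining sum of the bracket collapses to its single nonzero term $s_{w_1\cdots w_\ell}(n)=s_w(n)$, so $\sum_{n=0}^N(-1)^{s_w(n)}=O(N^{1-\epsilon})$ and $w$ satisfies \emph{Property P}. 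I expect the genuine analytic input — translating $\rho<2$ into $\|M^{k}\|=O((\rho')^{k})$ — to be entirely standard; the only place demanding care is the iteration itself, since \Cref{matrix-recurrence} supplies a clean identity only at odd arguments, so the reduction must carry the bounded correction vectors $e_N$ and the base term $v(0)$ explicitly. The fact that each $\|v(n)\|=\sqrt d$ is constant is what confines all of these to the implied constant, so this is bookkeeping rather than a substantial obstacle.
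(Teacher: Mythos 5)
Your proposal is correct and follows essentially the same route as the paper: iterate the recurrence of \Cref{matrix-recurrence} down the dyadic chain $N_j=\lfloor N_{j-1}/2\rfloor$ with bounded correction vectors, bound $\|M^j\|$ via the spectral radius (Jordan form), and read off the scalar and Property~P statements coordinatewise. Your introduction of $\rho'$ with $\max(1,\rho)<\rho'<2$ is only a minor presentational refinement of the paper's $\|M^n\|=O(|\lambda|^n p(n))$ step.
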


\begin{proof}
Let $N_0=N$ and $L=\floor{\log_2(N)}+1$. Set $N_{i+1}=\floor{\frac{N_i}{2}}$ for $1\leq i\leq L$. Hence $N_L=0$ and for each $i<L$, we have $N_i=2N_{i+1}$ if $N_i$ is even and $N_i=2N_{i+1}+1$ if $N_i$ is odd. For $1\leq i\leq L$, let
$$c_i=V(w,u)(N_{i-1})-V(w,u)(2N_i+1).$$
Although $c_i$ depends on $N$, observe that $\|c_i\|\leq \sqrt{|\calO_w(u)|}$ since $c_i=0$ if $N_{i-1}$ is odd and all entries of $c_i$ are $\pm1$ if $N_{i-1}$ is even. Now, repeated applications of \Cref{matrix-recurrence} yield the following. As before, we set $M=M(w,u)$ et cetera.
\begin{align*}
V(N)&=V(N_0)\\
&=MV(N_1)+c+c_1\\
&=M^2V(N_2)+M(c+c_2)+(c+c_1)\\
&=\ldots\\
&=M^iV(N_i)+\sum_{r=0}^{i-1}M^r(c+c_{r+1})\\
&=\ldots\\
&=M^LV(N_L)+\sum_{r=0}^{L-1}M^r(c+c_{r+1})\\
&=M^Lv(0)+\sum_{r=0}^{L-1}M^r(c+c_{r+1}).
\end{align*}
Hence
$$\|V(N)\|\leq c'\sum_{r=0}^L\|M^r\|,$$
where $c'=\|c\|+\sqrt{|\calO_w(u)|}$. Now, recall that if $\lambda$ is an eigenvalue of $M$ of largest magnitude, then
$$\|M^n\|=O(|\lambda|^np(n)),$$
for some polynomial $p$. Hence, the above calculation yields
$$\|V(w)(N)\|=O\left(N^{\log_2|\lambda|}p\left(\log_2(N)\right)\right).$$
Since $|\lambda|<2$ by hypothesis, we may take $\epsilon<1-\log_2|\lambda|$. 
\end{proof}


In light of \Cref{eval-growth}, we consider a property for pairs $(w,u)$ which is analogous to {\em Property P}. A pair $(w,u)$ is said to satisfy \textit{Property Q} if there exists $\epsilon>0$ such that
\begin{equation}\label{growth-new}
\|V(w,u)(N)\|\in O\left(N^{1-\epsilon}\right).
\end{equation}
Hence, $w$ satisfies \textit{Property P} if $(w,0^{\ell-1}1)$ satisfies \textit{Property Q} (although the converse need not hold).

\section{Eigenvalues of $M(w,u)$}\label{sec4}

\begin{thm}\label{eigenval-2}
If $\lambda$ is an eigenvalue of $M(w,u)$ with $|\lambda|\geq2$ and corresponding eigenvector $v\in\C^{|\calO_w(u)|}-\{0\}$, then the following hold.
\begin{enumerate}[(a)]
\item $|\lambda|=2$ and $v$ is an eigenvector of both $M_0(w,u)$ and $M_1(w,u)$ with eigenvalue $\frac{\lambda}{2}$.
\item All entries of $v$ are non-zero.
\item If $S_a(w)(u)=u$ and $T_a(w)(u)=0$ for some $a\in\{0,1\}$, then $\lambda=2$ and $v=cx$, for some $c\in\C$ and $x\in\{1,-1\}^{|\calO_w(u)|}$.
\end{enumerate}
\end{thm}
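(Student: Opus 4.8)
The plan is to mimic the eigenvalue analysis of \Cref{ex-011}, replacing its ad hoc ``cyclic permutation'' remark by the transitivity of the action on $\calO_w(u)$. Throughout I abbreviate $M_a=M_a(w,u)$, $S_a=S_a(w)$, $T_a=T_a(w)$, and $m=|\calO_w(u)|$, and I use the coordinate description: for $v\in\C^m$ indexed by $\calO_w(u)$, the definition of $M_a(w,u)$ gives $(M_a v)_{u'}=(-1)^{T_a(u')}v_{S_a(u')}$ for $a\in\{0,1\}$. For part (a), I would start from $\lambda v=Mv=M_0v+M_1v$ and combine the triangle inequality with (b) of \Cref{basic-props}:
$$|\lambda|\,\|v\|=\|M_0v+M_1v\|\le\|M_0v\|+\|M_1v\|=2\|v\|,$$
so $|\lambda|\le2$; with the hypothesis $|\lambda|\ge2$ this forces $|\lambda|=2$. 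Since $\|v\|\ne0$, the displayed line is an equality for two vectors $M_0v$ and $M_1v$ of equal nonzero norm, and strict convexity of the Euclidean norm forces $M_0v=M_1v$. Substituting back into $\lambda v=2M_0v$ yields $M_0v=\tfrac{\lambda}{2}v$ and likewise $M_1v=\tfrac{\lambda}{2}v$, which is (a).

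For part (b), set $\mu=\tfrac{\lambda}{2}$, so $|\mu|=1$ and $M_av=\mu v$ for $a\in\{0,1\}$ by (a). In coordinates this is $(-1)^{T_a(u')}v_{S_a(u')}=\mu v_{u'}$, whence $|v_{S_a(u')}|=|v_{u'}|$ for every $u'\in\calO_w(u)$ and every $a$. Thus $u'\mapsto|v_{u'}|$ is invariant under both $S_0$ and $S_1$, hence under the group they generate; since $\calO_w(u)$ is by definition a single orbit of that group, this function is constant on $\calO_w(u)$. As $v\ne0$ the constant is positive, so every entry of $v$ is nonzero, and in fact all entries share a common modulus, a fact I reuse below.

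For part (c), suppose $S_a(u)=u$ and $T_a(u)=0$ for some $a$. Evaluating $(-1)^{T_a(u')}v_{S_a(u')}=\tfrac{\lambda}{2}v_{u'}$ at $u'=u$ gives $v_u=\tfrac{\lambda}{2}v_u$; since $v_u\ne0$ by (b), we conclude $\lambda=2$, i.e. $\mu=1$. The coordinate relations then read $v_{S_a(u')}=(-1)^{T_a(u')}v_{u'}\in\{v_{u'},-v_{u'}\}$ for $a\in\{0,1\}$. Putting $c=v_u$, the set $\{u'\in\calO_w(u):v_{u'}\in\{c,-c\}\}$ contains $u$ and is closed under $S_0$ and $S_1$, so by minimality of the orbit it equals all of $\calO_w(u)$. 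Hence $v=cx$ with $x\in\{1,-1\}^m$, as claimed.

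The coordinate bookkeeping and the equality case of the triangle inequality are routine. The one genuinely load-bearing idea, and the step I would be most careful about, is the passage from ``invariant (resp. $\{c,-c\}$-valued) under each of $S_0$ and $S_1$ separately'' to ``constant (resp. $\{c,-c\}$-valued) on all of $\calO_w(u)$'': this is precisely where transitivity of $\langle S_0,S_1\rangle$ on $\calO_w(u)$ is invoked, and it is what a single signed permutation cannot supply on its own.
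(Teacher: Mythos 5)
Your proposal is correct and takes essentially the same route as the paper: part (a) via the equality case of the triangle inequality combined with the norm-preservation of $M_0(w,u)$ and $M_1(w,u)$, and parts (b) and (c) via transitivity of the action of the group generated by $S_0(w)$ and $S_1(w)$ on $\mathcal{O}_w(u)$, with the special fixed point $u$ forcing $\lambda=2$. The only differences are presentational: you phrase (b) as invariance of $u'\mapsto|v_{u'}|$ under each $S_a(w)$ instead of tracking a product $M_{h_r}(w,u)\cdots M_{h_1}(w,u)$ along a word $h$, and you spell out the $v=cx$ conclusion of (c), which the paper leaves implicit in the identity established during its proof of (b).
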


\begin{rmk}\label{imp-remark}
For $u=0^{\ell-1}1$, the hypothesis of (c) is satisfied with $a=\overline{w_k}$.
\end{rmk}

\begin{proof}
\begin{enumerate}[(a)]
\item From definitions, $M_0(w,u)\,v+M_1(w,u)\,v=\lambda v$. Since $|\lambda|\geq2$, one has $\|M_0(w,u)\,v+M_1(w,u)\,v\|\geq2\|v\|$.
However, from \Cref{basic-props}, $\|M_0(w,u)\,v\|=\|M_1(w,u)\,v\|=\|v\|$. Thus, $|\lambda|=2$ and $M_0(w,u)\,v$ and $M_1(w,u)\,v$ are positive multiples of each-other. Hence,
\begin{align}\label{eigenvals}
M_0(w,u)\,v=M_1(w,u)\,v=\frac{\lambda}{2}v.
\end{align}

\item Let $c\in\C$ be the the $u$-th entry of $v$. Pick arbitrary $u'\in\calO_w(u)$ and let $c'$ be the $u'$-th entry of $v$. By definition of $\calO_w(u)$, there exists $h=h_1\ldots h_r\in\{0,1\}^\ast$ such that $S_{h}(w)(u)=u'$. The $u$-th row of the matrix
$$M_{h_r}(w,u)\ldots M_{h_1}(w,u)$$
has all entries 0 except for that in the $S_{h}(w)(u)$-th column, which is $(-1)^{T_{h}(w)(u)}$. Since $S_{h}(w)(u)=u'$, this means that the entry in the $u$-th row of the vector
$$M_{h_r}(w,u)\ldots M_{h_1}(w,u)\,v$$
is $(-1)^{T_{h}(w)(u)}c'$. Hence, \eqref{eigenvals} yields that
\begin{equation*}
(-1)^{T_h(w)(u)}c'=\left(\frac{\lambda}{2}\right)^r\h{-2}c.
\end{equation*}
In particular we have $|c|=|c'|$ (since $|\lambda|=2$), and since $u'$ was chosen arbitrarily and $v\neq0$ we see that all entries of $v$ are non-zero.



\item By the hypothesis on $u$, the $(u,u)$-th entry of $M_{a}(w,u)$ is $1$. Consequently, the $u$-th entry of $M_{a}(w,u)\,v$ is $c$. By \eqref{eigenvals} this yields $\frac{\lambda}{2}=1$ (since $c\neq 0$).\qedhere
\end{enumerate}
\end{proof}

A $k\times k$ matrix $M$ with entries in $\{0,1,-1\}$ is called a {\em signed permutation matrix} if it has exactly one non-zero entry in each row and each column. Let $\sigma_M$ denote the permutation of $[k]$ such that the entry of $M$ in the position $(i,\sigma_M(i))$ is non-zero for each $i\in[k]$. If the entries of $M$ are indexed by tuples with entries from some set $S$ different from $[k]$, then we will take $\sigma_M$ to be a permutation of $S$ instead. In particular, for $M=M_0(w,u)$ or $M_1(w,u)$ we will take $\sigma_M$ to be a permutation of $\calO_w(u)$. Say that $M$ is cyclic whenever $\sigma_M$ is cyclic.

\begin{lem}\label{sign-cyclic-eigenval}
If $M$ is a cyclic signed permutation matrix, then all eigenvalues of $M$ are distinct. Furthermore, $M$ has eigenvalue $1$ if and only if $M$ has an even number of entries which are $-1$.
\end{lem}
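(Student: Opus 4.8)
The plan is to exploit the explicit structure of a cyclic signed permutation matrix. Since $M$ is $k\times k$ with $\sigma_M$ a $k$-cycle, after relabeling the index set so that $\sigma_M$ sends $1\mapsto 2\mapsto\cdots\mapsto k\mapsto 1$, the matrix $M$ has exactly one nonzero entry in each row, namely $M[i,i+1]=\varepsilon_i\in\{1,-1\}$ (indices mod $k$). Thus $M$ is a weighted cyclic shift, and the natural object to compute is its characteristic polynomial directly rather than its eigenvectors. First I would observe that for such a matrix, the only closed walks in the associated digraph are powers of the full cycle, so the characteristic polynomial will have a very sparse form.

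Concretely, I would compute $\det(M-\lambda I)$ by cofactor expansion, or more cleanly by noting that $M^k = \left(\prod_{i=1}^k \varepsilon_i\right) I$. Let $P=\prod_{i=1}^k\varepsilon_i \in\{1,-1\}$ be the product of all the signs, i.e. $P=(-1)^{(\text{number of entries equal to }-1)}$. Then every eigenvalue $\lambda$ satisfies $\lambda^k = P$: indeed, if $v$ is an eigenvector then $M^k v = \lambda^k v = P v$, so $\lambda^k=P$. Conversely I would check that the characteristic polynomial is exactly $\lambda^k - P$ (up to sign), for instance by verifying that the $k$ distinct $k$-th roots of $P$ are each achieved — one can exhibit an eigenvector for each root $\zeta$ with $\zeta^k=P$ by solving the recurrence $\varepsilon_i v_{i+1}=\zeta v_i$ around the cycle, which closes up consistently precisely because $\zeta^k=P$. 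Since the $k$ roots of $\lambda^k=P$ are the $k$ distinct $k$-th roots of a number on the unit circle, they are all distinct, proving the first claim.

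For the second claim, I note that $1$ is an eigenvalue if and only if $1$ is a root of $\lambda^k-P$, i.e. if and only if $1^k = P$, which means $P=1$. By the definition of $P$ as $(-1)$ raised to the number of entries equal to $-1$, we have $P=1$ exactly when that number is even. This gives the stated equivalence immediately.

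The main obstacle is purely bookkeeping: confirming that the characteristic polynomial is exactly $\lambda^k - P$ and not merely a divisor of it, so that \emph{every} $k$-th root of $P$ genuinely occurs as an eigenvalue (which is what forces distinctness, rather than just forcing eigenvalues to lie among the roots). The relation $M^k=PI$ only shows eigenvalues are \emph{among} the roots; to rule out repeated eigenvalues I must produce $k$ linearly independent eigenvectors or compute the characteristic polynomial outright. The cleanest route is the explicit eigenvector construction via the cyclic recurrence $v_{i+1}=\zeta\,\varepsilon_i^{-1}v_i$, whose consistency condition around the full loop is exactly $\zeta^k=P$; distinct $\zeta$ then yield eigenvectors with distinct eigenvalues, settling both the count and the distinctness. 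I would also flag the triviality of the $k=1$ case separately, where $M=[\varepsilon_1]$ and the single eigenvalue is $\varepsilon_1$, matching the formula.
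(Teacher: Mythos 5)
Your proposal is correct and follows essentially the same route as the paper: both arguments exhibit, for each of the $k$ distinct $k$-th roots $\zeta$ of $(-1)^{\#\{-1\text{ entries}\}}$, an explicit eigenvector (the paper writes the formula directly, you obtain the same vectors by solving the cyclic recurrence $\varepsilon_i v_{i+1}=\zeta v_i$), and then conclude distinctness because a $k\times k$ matrix has at most $k$ eigenvalues, with the eigenvalue-$1$ criterion reading off $\zeta=1$ being a root of $\lambda^k=(-1)^{\#\{-1\text{ entries}\}}$. Your preliminary observation $M^k=PI$ and the characteristic-polynomial framing are harmless additions, and you correctly flag that they alone would not give distinctness without the eigenvector construction.
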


\newcommand\hash{\text{\ttfamily\#}}

\begin{proof}
By conjugating $M$ by a permutation matrix if necessary, we may assume that $\sigma_M=(1\,2\ldots\,k)$ in cyclic notation. Let $R\subset [k]$ be the set of indices $r$ such that the $r$-th row of $M$ has a $-1$. We will show that all the $k$-th roots of $(-1)^{|R|}$ are eigenvalues of $M$, and since a $k\times k$ matrix can have at most $k$ distinct eigenvalues, it will follow that these are all the eigenvalues of $M$. From here the lemma easily follows.\\

Let $\omega$ be a primitive $2k$-th root of $1$. For each $p\in[k]$ let $v_p$ be the $k$-dimensional column vector whose $q$-th entry is given by
$$v_p(q):=(-1)^{\hash\{r\in R\,\mid\,r\geq q\}}\omega^{(2p+|R|)q}.$$
Hence, for $q<k$, the $q$-th entry of $Mv_p$ is given by
\begin{align*}
(Mv_p)(q)&=\begin{cases}
v_p(q+1) & q\notin R\\
-v_p(q+1) & q\in R
\end{cases}\\
&=\begin{cases}
(-1)^{\hash\{r\in R\,\mid\,r\geq q+1\}}\omega^{(2p+|R|)(q+1)} & q\notin R\\
(-1)^{\hash\{r\in R\,\mid\,r\geq q+1\}-1}\omega^{(2p+|R|)(q+1)} & q\in R
\end{cases}\\
&=(-1)^{\hash\{r\in R\,\mid\,r\geq q\}}\omega^{(2p+|R|)(q+1)}\\
&=\omega^{(2p+|R|)}v_p(q).
\end{align*}
The $k$-th entry of of $Mv_p$ is given by
\begin{align*}
(Mv_p)(k)&=\begin{cases}
v_p(1) & k\notin R\\
-v_p(1) & k\in R
\end{cases}\\
&=\begin{cases}
(-1)^{\hash\{r\in R\,\mid\,r\geq 1\}}\omega^{2p+|R|} & k\notin R\\
(-1)^{\hash\{r\in R\,\mid\,r\geq 1\}+1}\omega^{2p+|R|} & k\in R
\end{cases}\\
&=(-1)^{|R|+\hash\{r\in R\,\mid\,r\geq k\}}\omega^{2p+|R|}\\
&=(-1)^{\hash\{r\in R\,\mid\,r\geq k\}}\omega^{(2p+|R|)(k+1)}\\
&=\omega^{2p+|R|}v_p(k).
\end{align*}
Hence, $\omega^{2p+|R|}$ is an eigenvalue of $M$ with eigenvector $v_p$ for each $p\in[k]$. As $p$ varies over $[k]$, $\omega^{2p+|R|}$ varies over all $k$-th roots of $(-1)^{|R|}$. Hence, the claim follows. 
\end{proof}

Suppose $M$ is a signed permutation matrix and the cycle $\pi=(i_1\ldots i_r)$ appears in the cycle-decomposition of $\sigma_M$. The number of $-1$'s in $\pi$ (with respect to $M$) is defined to be the quantity $\hash\{j\mid\text{the }i_j\text{-th row of }M\text{ has a }-1\}$. Note that this is the same as $\hash\{j\mid\text{the }i_j\text{-th column of }M\text{ has a }-1\}$.

\begin{thm}\label{sign-perm-eigenval}
For $M$ a signed permutation matrix, the following are equivalent.
\begin{enumerate}[(a)]
    \item There exists an eigenvector of $M$ with eigenvalue $1$ and all entries non-zero.
    \item The number of $-1$'s in every cycle in $\sigma_M$ is even.
\end{enumerate}
\end{thm}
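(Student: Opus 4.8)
The plan is to reduce the statement to the cyclic case already settled in \Cref{sign-cyclic-eigenval}, using the cycle-decomposition of $\sigma_M$. First I would conjugate $M$ by a permutation matrix so that it becomes block diagonal, with one block $M_\pi$ for each cycle $\pi$ of $\sigma_M$. Each such block is itself a cyclic signed permutation matrix, and the number of $-1$'s in $\pi$ (as defined just before the theorem) equals the number of $-1$ entries of $M_\pi$. Conjugation by a permutation matrix merely reorders coordinates, so it preserves eigenvalues and carries eigenvectors with all entries non-zero to eigenvectors with all entries non-zero; hence it suffices to prove the equivalence for this block-diagonal form.

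Because $M$ is now block diagonal, any vector $v$ decomposes as a tuple $(v_\pi)_\pi$ indexed by the cycles, and $Mv=v$ holds if and only if $M_\pi v_\pi=v_\pi$ for every $\pi$. Consequently $v$ is an eigenvector of $M$ with eigenvalue $1$ and all entries non-zero precisely when, for each cycle $\pi$, the block $v_\pi$ is an eigenvector of $M_\pi$ with eigenvalue $1$ and all entries non-zero. This is the key reduction: both conditions of the theorem have become ``local'' to the individual cycles, so the whole problem is decided cycle by cycle.

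For (b) $\Rightarrow$ (a), if every cycle carries an even number of $-1$'s, then by \Cref{sign-cyclic-eigenval} each block $M_\pi$ has $1$ among its eigenvalues. Moreover, the eigenvector exhibited in the proof of that lemma (choosing the index $p$ with $\omega^{2p+|R|}=1$, which exists exactly because $|R|$ is even) has every coordinate equal to a root of unity times $\pm1$, hence no zero entry. Concatenating these blockwise eigenvectors yields an eigenvector of $M$ with eigenvalue $1$ and all entries non-zero. For (a) $\Rightarrow$ (b) I would argue by contraposition: if some cycle $\pi$ has an odd number of $-1$'s, then \Cref{sign-cyclic-eigenval} shows $M_\pi$ does not have $1$ as an eigenvalue, so any eigenvector $v$ of $M$ with eigenvalue $1$ is forced to satisfy $M_\pi v_\pi = v_\pi$ with $v_\pi=0$; thus $v$ has a zero coordinate and (a) fails.

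The bookkeeping — the conjugation and the blockwise splitting of eigenvectors — is entirely routine, so the single point that needs care is confirming that the eigenvalue-$1$ eigenvector furnished by \Cref{sign-cyclic-eigenval} really has no vanishing coordinate. This is immediate from its explicit description $v_p(q)=(-1)^{\#\{r\in R\mid r\geq q\}}\omega^{(2p+|R|)q}$, each coordinate of which has modulus $1$. I therefore expect no genuine obstacle beyond this verification, since all the arithmetic content already resides in the cyclic lemma.
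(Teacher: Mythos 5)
Your proposal is correct and follows essentially the same route as the paper: conjugate $M$ to a block-diagonal form indexed by the cycles of $\sigma_M$, apply \Cref{sign-cyclic-eigenval} blockwise (using the explicit modulus-one eigenvector to get non-vanishing entries), and argue the converse by noting that a block without eigenvalue $1$ forces zero coordinates. No substantive differences from the paper's argument.
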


\begin{proof}
There exists a permutation matrix $X$ such that $XMX^{-1}$ is a block matrix,
$$XMX^{-1}=\begin{bmatrix}
M_1 & 0 &  & 0\\
0 & M_2 & \ldots & 0\\
0 & 0 &  & M_r
\end{bmatrix},$$
where each $M_i$ is a cyclic signed permutation matrix.\\

If all cycles of $\sigma_M$ have an even number of $-1$'s (with respect to $M$), then each $M_i$ has an even number of $-1$'s. Hence, each $M_i$ has an eigenvector with eigenvalue $1$ and all entries non-zero (by \Cref{sign-cyclic-eigenval}). Stacking these eigenvectors, we obtain an eigenvector $v$ of $XMX^{-1}$ with eigenvalue $1$ and all entries non-zero. Now $X^{-1}v$ is the desired eigenvector of $M$.\\

Conversely, if some cycle of $\sigma_M$ has an odd number of $-1$'s (with respect to $M$), then there exists $i$ such that $M_i$ has an odd number of $-1$'s. Hence, $M_i$ does not have eigenvalue $1$ (by \Cref{sign-cyclic-eigenval}). Consequently, any eigenvector of $XMX^{-1}$ with eigenvalue $1$ cannot have non-zero entries in the rows corresponding to $M_i$. Thus, $M$ cannot have an eigenvector with eigenvalue $1$ and all entries non-zero. 
\end{proof}

From \Cref{basic-props}, we know that $M_a(w,u)$, $a\in\{0,1\}$, is a signed permutation matrix. Furthermore, $\sigma_{M_a(w,u)}=S_a(w)|_{\calO_w(u)}$, the restriction of $S_a(w)$ to $\calO_w(u)$. Hence, we obtain the following sufficient conditions for \textit{Properties P} and \emph{Q} to be satisfied.

\begin{cor}\label{odd-minus-1s}
Suppose $a,b\in\{0,1\}$ such that $S_a(w)(u)=u$, $T_a(w)(u)=0$, and some cycle of $\sigma_{M_b(w,u)}$ has an odd number of $-1$'s. Then $(w,u)$ satisfies {\em Property Q}. In particular, if the number of $-1$'s in some cycle of $\sigma_{M_0(w)}$ or $\sigma_{M_1(w)}$ is odd, then $w$ satisfies {\em Property P}.
\end{cor}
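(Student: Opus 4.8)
The plan is to prove the statement by contradiction: I will show that under the stated hypotheses the matrix $M(w,u)$ cannot have an eigenvalue of absolute value $\geq2$, and then invoke \Cref{eval-growth} to conclude \emph{Property Q}. So suppose toward a contradiction that $\lambda$ is an eigenvalue of $M(w,u)$ with $|\lambda|\geq2$, and let $v\neq0$ be a corresponding eigenvector. The hypothesis furnishes an $a\in\{0,1\}$ with $S_a(w)(u)=u$ and $T_a(w)(u)=0$, so part (c) of \Cref{eigenval-2} applies and forces $\lambda=2$ with $v=cx$ for some nonzero $c\in\C$ and some $x\in\{1,-1\}^{|\calO_w(u)|}$. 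In particular, every entry of $v$ is nonzero.

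Next I would exploit part (a) of \Cref{eigenval-2}, which tells us that this same $v$ is an eigenvector of $M_b(w,u)$ with eigenvalue $\lambda/2=1$. Now $M_b(w,u)$ is a signed permutation matrix with $\sigma_{M_b(w,u)}=S_b(w)|_{\calO_w(u)}$ (by \Cref{basic-props} and the remarks preceding this corollary), and $v$ is an eigenvector of it with eigenvalue $1$ and all entries nonzero. Thus condition (a) of \Cref{sign-perm-eigenval} holds for $M_b(w,u)$, so condition (b) must hold as well: \emph{every} cycle of $\sigma_{M_b(w,u)}$ contains an even number of $-1$'s. This directly contradicts the hypothesis that some cycle of $\sigma_{M_b(w,u)}$ has an odd number of $-1$'s. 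Hence no eigenvalue of $M(w,u)$ has absolute value $\geq2$, and \Cref{eval-growth} gives \emph{Property Q} for $(w,u)$.

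For the final sentence I would specialize to $u=0^{\ell-1}1$. Here \Cref{imp-remark} guarantees an $a$ with $S_a(w)(u)=u$ and $T_a(w)(u)=0$, so the assumption that some cycle of $\sigma_{M_0(w)}$ or $\sigma_{M_1(w)}$ has an odd number of $-1$'s supplies exactly the cycle condition needed (with $b$ chosen to be whichever of $0,1$ exhibits the odd cycle). The body of the corollary then shows that every eigenvalue of $M(w)=M(w,0^{\ell-1}1)$ has absolute value less than $2$, and the last assertion of \Cref{eval-growth} upgrades this to \emph{Property P} for $w$.

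Since all the substantive work has already been done in \Cref{eigenval-2} and \Cref{sign-perm-eigenval}, this proof is essentially a chaining of those results and involves no hard computation. The only point demanding care is the directionality in the use of \Cref{sign-perm-eigenval}: one produces a single concrete eigenvector (eigenvalue $1$, all entries nonzero), uses the equivalence to deduce that \emph{all} cycles have an even number of $-1$'s, and then reads off the contradiction against the single cycle assumed to have an odd count.
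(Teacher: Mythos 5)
Your proof is correct and takes essentially the same route as the paper's: both rule out eigenvalues of $M(w,u)$ of modulus at least $2$ by combining \Cref{eigenval-2} (parts (a) and (c), using the hypothesis on $a$ to pin down $\lambda=2$) with the equivalence in \Cref{sign-perm-eigenval} applied to $M_b(w,u)$, and then conclude via \Cref{eval-growth} and \Cref{imp-remark}. The only difference is presentational — you argue by direct contradiction where the paper states the same chain in contrapositive form.
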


\begin{proof}
It is clear that the second statement follows from the first by \Cref{imp-remark}, so we will prove the first statement. By \Cref{sign-perm-eigenval}, $M_b(w,u)$ does not have an eigenvector with eigenvalue $1$ and all entries non-zero. Thus, by \Cref{eigenval-2}, $M(w,u)$ cannot have an eigenvalue $\lambda$ with $|\lambda|\geq 2$. The desired result now follows from \Cref{eval-growth}. 
\end{proof}

From the definition \eqref{logical-defn}, we see that $T_a(w)(u)=1$ if and only if $w$ has first letter $a$ and $u$ has first letter $1$. In other words, for $u'\in\calO_w(u)$, the non-zero entry in the $u'$-th row of $M_a(w,u)$ is $-1$ if and only if $w$ has first letter $a$ and $u'$ has first letter $1$. Hence, the number of $-1$'s in any cycle $C$ of $\sigma_{M_a(w,u)}$ is given by
\begin{itemize}
\item $0$, if $a$ is not the first letter of $w$.
\item the number of words in $C$ with first letter $1$, if $a$ is the first letter of $w$.
\end{itemize}
Since $\sigma_{M_a(w,u)}=S_a(w)|_{\calO_w(u)}$, \Cref{odd-minus-1s} can now be stated in the following equivalent way.

\begin{thm}\label{imp-suff-cond}
Suppose $a,b\in\{0,1\}$ such that $S_a(w)(u)=u$, $T_a(w)(u)=0$, and some cycle of $S_b(w)|_{\calO_w(u)}$ has an odd number of words with first letter $1$. Then $(w,u)$ satisfies {\em Property Q}. In particular, if some cycle of $S_0(w)|_{\calO_w}$ or $S_1(w)|_{\calO_w}$ has an odd number of words with first letter $1$, then $w$ has {\em Property P}.
\end{thm}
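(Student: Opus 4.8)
The plan is to recognise that \Cref{imp-suff-cond} is nothing more than a reformulation of \Cref{odd-minus-1s}, in which the condition ``some cycle of $\sigma_{M_b(w,u)}$ has an odd number of $-1$'s'' is rephrased as a condition on the number of words having first letter $1$. Thus the whole argument reduces to translating between these two quantities and then quoting \Cref{odd-minus-1s} verbatim, so that the conclusion (\emph{Property Q}, and hence \emph{Property P} via \Cref{eval-growth}) is inherited unchanged.

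First I would record the dictionary supplied by the paragraph preceding the statement: the unique non-zero entry in row $u'$ of $M_b(w,u)$ equals $(-1)^{T_b(w)(u')}$, and $T_b(w)(u')=1$ precisely when $b$ is the first letter of $w$ and $u'$ has first letter $1$. Consequently, for any cycle $C$ of $\sigma_{M_b(w,u)}=S_b(w)|_{\calO_w(u)}$, the number of $-1$'s carried by $C$ is $0$ when $b$ is not the first letter of $w$, and equals the number of words of $C$ with first letter $1$ when $b$ is the first letter of $w$. The crucial consequence is that a cycle can carry an \emph{odd} number of $-1$'s only when $b$ is the first letter of $w$, and in that case ``odd number of $-1$'s'' and ``odd number of words with first letter $1$'' are literally the same condition. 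Substituting this equivalence into the hypothesis of \Cref{odd-minus-1s} produces exactly the hypothesis of \Cref{imp-suff-cond}, with no further work needed.

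For the ``in particular'' clause I would specialise to $u=0^{\ell-1}1$, where \Cref{imp-remark} already guarantees a letter $a$ with $S_a(w)(u)=u$ and $T_a(w)(u)=0$; this lets me drop the explicit hypothesis on $a$ and obtain \emph{Property P} for $w$ at once. The one point deserving care — and the only place the reformulation is not completely mechanical — is the observation that the count of first-letter-$1$ words is relevant to \Cref{odd-minus-1s} \emph{only} for the letter $b$ equal to the first letter of $w$: for the other letter every cycle automatically carries zero, hence an even number of, $-1$'s, so it can never witness the hypothesis. Making this restriction explicit is what I expect to be the main (if modest) obstacle, since it is what turns the loose phrasing ``some cycle of $S_0(w)|_{\calO_w}$ or $S_1(w)|_{\calO_w}$'' into a statement that genuinely matches \Cref{odd-minus-1s}.
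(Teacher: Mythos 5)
Your proposal is correct and follows essentially the same route as the paper: the paper proves \Cref{imp-suff-cond} precisely by the dictionary you describe (the non-zero entry in the $u'$-th row of $M_a(w,u)$ is $-1$ exactly when $a$ is the first letter of $w$ and $u'$ begins with $1$), and then restates \Cref{odd-minus-1s} in this language. The caveat you flag — that only the letter $b$ equal to the first letter of $w$ can yield a cycle with an odd number of $-1$'s, so the translation is genuinely two-cased — is exactly the case distinction recorded in the itemized remark preceding the theorem, and the paper handles it no more explicitly than you do.
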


The above is essentially at the core of the proofs of all our main results.

\section{Cycles of $S_0(w)|_{\calO_w(u)}$ and $S_1(w)|_{\calO_w(u)}$}\label{sec5}

For $a\in\{0,1\}$, let $\Lambda_a(w)(u)$ denote the length of the cycle of $S_a(w)$ containing $u$, i.e. the size of the orbit of $u$ under $S_a(w)$.

\begin{lem}\label{increasing}
Let $u',w'$ be suffixes of $u,w$ respectively with $|u'|=|w'|\geq 2$. For $a\in\{0,1\}$, we have $\Lambda_a(w)(u)\geq\Lambda_a(w')(u')$.
\end{lem}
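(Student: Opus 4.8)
The plan is to exploit the suffix-compatibility of the maps $S_a$ recorded in \Cref{observe}(b), which lets a cyclic relation for $u$ under $S_a(w)$ be pushed down to one for $u'$ under $S_a(w')$.

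First I would unwind the definition of the cycle length. Set $p=\Lambda_a(w)(u)$, so that $p$ is the least positive integer with $(S_a(w))^p(u)=u$. Since $S_h(w)$ is by definition the composition $S_{h_1}(w)\circ\cdots\circ S_{h_r}(w)$, the $p$-fold iterate $(S_a(w))^p$ equals $S_{a^p}(w)$, where $a^p$ denotes the length-$p$ word $aa\cdots a$. Thus the defining relation becomes $S_{a^p}(w)(u)=u$.

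Next I would apply \Cref{observe}(b) with $h=a^p$. As $u'$ and $w'$ are suffixes of $u$ and $w$ of common length $m=|u'|\geq 1$, that lemma shows $S_{a^p}(w')(u')$ is a suffix of $S_{a^p}(w)(u)$, and since it has length exactly $m$ it is precisely the length-$m$ suffix of $S_{a^p}(w)(u)$. But $S_{a^p}(w)(u)=u$, whose length-$m$ suffix is exactly $u'$, so I obtain $S_{a^p}(w')(u')=u'$, i.e. $(S_a(w'))^p(u')=u'$.

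Finally I would invoke the elementary fact that, for a permutation of a finite set, the integers $j$ fixing a given point form a subgroup $q\Z\subseteq\Z$, where $q$ is the cycle length of that point. Here the point is $u'$ and the permutation is $S_a(w')$, so $q=\Lambda_a(w')(u')$. Since $p$ fixes $u'$ and $p>0$, it follows that $q\mid p$ and hence $q\leq p$, which is exactly the claimed inequality $\Lambda_a(w')(u')\leq\Lambda_a(w)(u)$. I expect no serious obstacle: the argument rests entirely on \Cref{observe}(b), whose hypotheses are met, together with the divisibility of periods. The only point to watch is the bookkeeping between the word-power notation $a^p$ and iterated composition; notably, the stronger hypothesis $|u'|\geq 2$ appears to exceed what this direction requires.
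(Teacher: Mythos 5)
Your proof is correct and is exactly the argument the paper intends: the paper's proof consists of the single line that the claim ``follows at once using (b) in \Cref{observe}'', and your write-up merely fills in the routine details (iterating suffix-compatibility to get $S_a(w')^p(u')=u'$ and then using divisibility of periods). No gap; your remark that $|u'|\geq 2$ could be relaxed to $|u'|\geq 1$ is accurate but immaterial, since the paper's standing convention in that section is $\ell\geq 2$.
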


\begin{proof}
The claim follows at once using (b) in \Cref{observe}. 
\end{proof}

\begin{thm}\label{power of 2}
Every cycle of $S_0(w)$ and $S_1(w)$ has a length which is a power of $2$. Furthermore, for $a,b,c\in \{0,1\}$, we either have $\Lambda_c(aw)(bu)=\Lambda_c(w)(u)$ or $\Lambda_c(aw)(bu)=2\Lambda_c(w)(u)$.
\end{thm}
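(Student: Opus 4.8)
The plan is to exploit the fact that, by (a) of \Cref{observe}, each $S_c(W)$ is an $\mathbb{F}_2$-linear bijection of $\{0,1\}^{|W|}$: it is additive for $\oplus$ and fixes $0^{|W|}$. Two structural features of these maps will drive the whole argument. First, reading off \eqref{combined} coordinate-wise gives $S_a(w)(u)_i=u_i\oplus\big[(\overline{a}\oplus w_{i+1})\land u_{i+1}\big]$ for $i<\ell$ and $S_a(w)(u)_\ell=u_\ell$; hence the correction term $N$, defined by $Nu:=S_a(w)(u)\oplus u$, sends $u$ to a word whose $i$-th letter depends only on the strictly-later letter $u_{i+1}$. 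Second, by (b) of \Cref{observe} the length-$\ell$ suffix of $S_c(aw)(bu)$ is exactly $S_c(w)(u)$; that is, $S_c(aw)$ is a skew product over $S_c(w)$ acting on suffixes. The first sentence of the theorem will come from a unipotency argument built on the first feature, and the doubling statement from the second.

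For the first assertion I would argue that $S_a(w)$ is unipotent over $\mathbb{F}_2$. Since $Nu$ has $i$-th letter depending only on coordinate $i+1$, the map $N$ is (strictly upper triangular, hence) nilpotent with $N^\ell=0$. Over $\mathbb{F}_2$ the freshman's-dream identity yields $(\mathrm{id}\oplus N)^{2^t}=\mathrm{id}\oplus N^{2^t}$, so taking $2^t\geq\ell$ gives $S_a(w)^{2^t}=\mathrm{id}$. Thus the order of $S_a(w)$ divides $2^t$, and every cycle length, being the order of some element under $S_a(w)$, divides this power of $2$ and is therefore itself a power of $2$; the same applies to $S_1(w)$.

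For the doubling statement I would use the skew-product structure directly. Let $\pi$ send a length-$(\ell+1)$ word to its length-$\ell$ suffix, so that $\pi\circ S_c(aw)=S_c(w)\circ\pi$, and write $v=bu$, $k=\Lambda_c(w)(u)$, $m=\Lambda_c(aw)(bu)$. Projecting the relation $S_c(aw)^m(v)=v$ gives $S_c(w)^m(u)=u$, whence $k\mid m$. Next, projecting $S_c(aw)^k(v)$ shows it shares the suffix of $v$, so it equals either $v$ or $v\oplus 10^\ell$. In the first case $m\mid k$, which together with $k\mid m$ forces $m=k$. In the second case I would use that $10^\ell$ is fixed by $S_c(aw)$ (the first line of \eqref{s_w} applied to the word $aw$) together with linearity to compute $S_c(aw)^{2k}(v)=S_c(aw)^k(v)\oplus S_c(aw)^k(10^\ell)=(v\oplus 10^\ell)\oplus 10^\ell=v$; hence $m\mid 2k$, and since $k\mid m\mid 2k$ with $m\neq k$ we get $m=2k$. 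This is exactly the claimed dichotomy. (As a byproduct, the same dichotomy reproves the first assertion by induction on $\ell$, the base case $\ell=1$ being the trivial one in which every $S_a$ is the identity.)

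The main obstacle is pinning down the skew-product picture and running the second case cleanly: one must be certain that $S_c(aw)^k(v)$ can differ from $v$ only in its leading letter, which is precisely what the suffix-projection identity guarantees, and that this single defect letter is controlled by the fixed vector $10^\ell$, so that $\mathbb{F}_2$-linearity collapses $S_c(aw)^{2k}$ back to the identity on $v$. Once that is in place, the divisibility bookkeeping $k\mid m\mid 2k$ finishes the proof, and the only remaining delicate point is the characteristic-$2$ freshman's-dream step underlying the unipotency argument for the first assertion.
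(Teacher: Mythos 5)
Your proposal is correct, but it does not follow the paper's route in full, so a comparison is worthwhile. For the doubling dichotomy you use the same underlying skew-product idea as the paper (the length-$\ell$ suffix of $S_c(aw)(bu)$ is $S_c(w)(u)$, by (b) of \Cref{observe}), but the paper normalizes $w$ to begin with $0$ via \Cref{symmetry}, computes the new first letter explicitly from \eqref{logical-defn} in two cases ($c=1$ preserves the first letter $b$, giving equality of cycle lengths; $c=0$ toggles it according to the parity of first letters along the base cycle), and then concludes $\Lambda=L$ or $2L$ using \Cref{increasing} and the fixed word $10^\ell$. Your replacement of this case analysis by divisibility bookkeeping ($k\mid m\mid 2k$, combining $\mathbb{F}_2$-linearity from (a) of \Cref{observe} with $S_c(aw)(10^\ell)=10^\ell$ from \eqref{s_w}) is cleaner and avoids the symmetry reduction; the price is that it only records the dichotomy, whereas the paper's explicit first-letter computation yields the parity criterion of \Cref{promoted-footnote} for \emph{when} doubling occurs, which is needed later in \Cref{single-run}. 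For the first assertion the routes genuinely diverge: the paper obtains it from the dichotomy by induction on $|w|$ with base case $|w|=2$, while you prove it directly by writing $S_a(w)=\mathrm{id}\oplus N$ with $(Nu)_i=(\overline{a}\oplus w_{i+1})\land u_{i+1}$ for $i<\ell$ and $(Nu)_\ell=0$, so $N^\ell=0$, and the characteristic-$2$ identity $(\mathrm{id}\oplus N)^{2^t}=\mathrm{id}\oplus N^{2^t}$ gives $S_a(w)^{2^t}=\mathrm{id}$ once $2^t\geq\ell$. That argument is valid (note $\mathrm{id}$ and $N$ commute, so the binomial expansion applies) and even stronger, since it shows the full permutation $S_a(w)$ has $2$-power order without any induction; all of its ingredients are available in the paper, so your proof is complete.
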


\begin{proof}
For $|w|=2$ we have $2\leq |\calO_w|\leq 2$ by \Cref{matrix-size-bound} and \Cref{matrix-size-bound-1}. Furthermore, the only words of length $2$ which are not in $\calO_w$ are $10$ and $00$ (by \Cref{last-letter}), and both of these words are fixed by $S_0(w)$ and $S_1(w)$. Hence, every cycle of $S_0(w)$ and $S_1(w)$ has length $1$ or $2$. Now, it suffices to prove the second part of the claim for arbitrary $w$, since induction on $|w|$ shows that the second claim implies the first. By \Cref{symmetry}, we may assume, without loss of generality, that the first letter of $w$ is $0$. Let $w=0w'$, and let $d\in\{0,1\}$ be the first letter of $u$ with $u=du'$.\\

\textbf{Case I:} $c=1$.\\
By \eqref{logical-defn}, the first letter of $S_c(aw)(bu)=S_1(a0w')(bdu')$ is given by
$$(0\land d)\oplus b=b,$$
which is independent of $d$. Hence, one has
\begin{align*}
S_1(aw)(bu)&=bS_1(w)(u)\text{, therefore}\\
S_1(aw)^2(bu)&=bS_1(w)^2(u)\text{, therefore}\\
S_1(aw)^3(bu)&=bS_1(w)^3(u)\text{, therefore}\\
&\ldots
\end{align*}
This yields that $\Lambda_1(aw)(bu)=\Lambda_1(w)(u)$, as desired.\\

\textbf{Case II:} $c=0$.\\
By \eqref{logical-defn}, the first letter of $S_c(aw)(bu)=S_0(a0w')(bdu')$ is given by
$$(1\land d)\oplus b=d\oplus b.$$
Thus, one has
\begin{align}\label{match}
S_0(aw)(bu)&=(d\oplus b)S_0(w)(u).
\end{align}
Setting $\Lambda_0(w)(u)=L$, repeated applications of \eqref{match} yield that $S_0(aw)^L(bu)$ either equals $bS_0(w)^L(u)=bu$ (if $d=0$) or $\overline{b}S_0(w)^L(u)=\overline{b}u$ (if $d=1$). If the prior, then $\Lambda_0(aw)(bu)=L$ by \Cref{increasing}. If the latter, then observe that
\begin{equation*}
S_0(aw)^L(bu)=10^\ell\oplus bu.
\end{equation*}
Since $S_0(aw)(10^\ell)=10^\ell$ (by \Cref{last-letter}), one has that $\Lambda_0(aw)(bu)=2L$ (by (b) of \Cref{observe} and \Cref{increasing}). 
\end{proof}

\begin{rmk}\label{promoted-footnote}
\eqref{match} yields that $S_0(aw)^L(bu)=bu$ if the first letter of an even number of elements in the cycle of $S_0(w)$ containing $u$ is $1$, and $S_0(aw)^L(bu)=\overline{b}u$ otherwise. Note, however, that this is under the assumption that the first letter of $w$ is $0$.
\end{rmk}

\begin{lem}\label{cycle-length}
For $a\in\{0,1\}$ and $\ell\geq 2$, one has
$$\Lambda_a\left(a^\ell\right)\left(0^{\ell-1}1\right)=2^{\ceil{\log_2(\ell)}}.$$
\end{lem}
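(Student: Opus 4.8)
The plan is to compute $\Lambda_a(a^\ell)(0^{\ell-1}1)$ by exploiting the recursive doubling structure established in \Cref{power of 2}. Since $w=a^\ell$, by \Cref{symmetry} it suffices to treat the case $a=0$, so I would determine $\Lambda_0(0^\ell)(0^{\ell-1}1)$ and invoke symmetry for $a=1$. Writing $0^\ell=0\cdot 0^{\ell-1}$ and $0^{\ell-1}1=0\cdot 0^{\ell-2}1$, \Cref{power of 2} tells me that passing from length $\ell-1$ to length $\ell$ either leaves the cycle length unchanged or doubles it; which of the two occurs is governed precisely by \Cref{promoted-footnote}. Concretely, the doubling happens if and only if an \emph{odd} number of the words in the $S_0(0^{\ell-1})$-cycle of $0^{\ell-2}1$ have first letter $1$. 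So the whole problem reduces to tracking the parity of first-letter-$1$ occurrences along these cycles as $\ell$ grows.

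First I would set up an induction on $\ell$ with the target value $2^{\ceil{\log_2 \ell}}$, noting that this quantity stays fixed for $\ell$ in the range $(2^{k-1},2^k]$ and doubles exactly when $\ell$ crosses a power of $2$, i.e. when $\ell-1$ is itself a power of $2$. The base case $\ell=2$ gives $\Lambda_0(00)(01)=2^1$, which I can check directly from the fact (in the proof of \Cref{power of 2}) that $\calO_{00}=\{01,11\}$ and $S_0(00)$ swaps them. For the inductive step, I need to show that the doubling criterion from \Cref{promoted-footnote} fires exactly when $\ell-1$ is a power of $2$.

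The key computational step is to describe the $S_0(0^{\ell-1})$-cycle of $0^{\ell-2}1$ explicitly enough to count first letters. Using \eqref{match} and (a) of \Cref{observe}, I expect the cycle of $0^{\ell-1}1$ under $S_0(0^\ell)$ to consist of words that are sums (under $\oplus$) of shifted copies of $0^{i}10^{\ell-i-1}$, with the pattern of which shifts appear governed by a binomial-coefficient parity. Indeed I anticipate that the words in the cycle are indexed by $0\le t<\Lambda$ with the $j$-th letter of the $t$-th word equal to $\binom{t}{\text{something}}\bmod 2$, so that counting how many have first letter $1$ becomes a Lucas'-theorem style count of odd binomial coefficients. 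The parity of that count should be odd precisely on the crossing of a power of $2$, matching $2^{\ceil{\log_2\ell}}$.

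The hard part will be pinning down the exact combinatorial description of the cycle and proving that the parity of first-letter-$1$ words in the relevant cycle is odd \emph{if and only if} $\ell-1$ is a power of $2$. Getting the indexing right (and correctly handling the interplay between \Cref{increasing}, which only gives one inequality, and the doubling dichotomy) is where the real work lies; I would try to phrase the induction so that I carry along not just the cycle length but also the full first-letter profile of the cycle, since \Cref{promoted-footnote} needs that profile rather than just the length. An alternative, possibly cleaner, route is to prove directly by induction that $S_0(0^\ell)^{2^{\ceil{\log_2\ell}}}$ fixes $0^{\ell-1}1$ while no smaller power does, using \eqref{match} to reduce each claim to a statement about a shorter word and closing the induction via the power-of-$2$ crossing condition.
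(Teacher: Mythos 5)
Your outline is coherent and genuinely different from the paper's: you want to reduce, via \Cref{power of 2} and \Cref{promoted-footnote}, to deciding for each length whether the $S_0(0^{\ell-1})$-cycle of $0^{\ell-2}1$ contains an odd number of words with first letter $1$, whereas the paper never counts first letters at all — for $\ell=2^r$ it computes the halfway iterate explicitly, $S_a(a^{\ell})^{2^{r-1}}(0^{\ell-1}1)=\left(0^{2^{r-1}-1}1\right)^2$, by combining the prefix information from \Cref{prefix} with the suffix information supplied by the induction hypothesis and (b) of \Cref{observe}, deduces $S_a(a^\ell)^{2^r}(0^{\ell-1}1)=0^{\ell-1}1$, and then settles general $\ell$ with $2^{r-1}<\ell\leq2^r$ by sandwiching the cycle length between $2^{r-1}$ and $2^r$ (\Cref{increasing}, \Cref{power of 2}) and excluding $2^{r-1}$ with the prefix lemma. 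The problem is that your proof is not finished: the decisive step — an explicit enough description of the iterates of $0^{\ell-2}1$ under $S_0(0^{\ell-1})$ to show that the number of cycle elements with first letter $1$ is odd if and only if $\ell-1$ is a power of $2$ — is only announced (``I expect'', ``I anticipate'', ``should be''), never proved, and without it the induction does not close. You also cannot shortcut this by citing \Cref{single-run}, which is exactly the parity statement you need (shifted by one letter): in the paper that lemma is deduced \emph{from} \Cref{cycle-length}, so invoking it here would be circular.

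The combinatorics you anticipate is in fact correct, and the cleanest way to complete either of your routes is to make it precise: for $w=a^\ell$, formula \eqref{combined} shows $S_a(w)$ acts on $\{0,1\}^\ell$ as the $\mathbb{F}_2$-linear map $I+N$, where $N$ is the nilpotent shift sending $0^{\ell-1}1$ to $0^{\ell-2}10$, etc.; hence $S_a(w)^t(0^{\ell-1}1)$ has $i$-th letter $\binom{t}{\ell-i}\bmod 2$. From $(I+N)^{2^m}=I+N^{2^m}$ over $\mathbb{F}_2$ one reads off directly that the smallest power of $S_a(w)$ fixing $0^{\ell-1}1$ is $2^{\ceil{\log_2(\ell)}}$ (your ``alternative route'', with no need for \Cref{promoted-footnote} at all), and a Lucas-type count of odd binomial coefficients then also yields the parity criterion your first route needs. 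As submitted, though, the proposal identifies the right structure but stops exactly where the real work begins, so it does not yet constitute a proof.
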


\begin{proof}
By \Cref{symmetry}, we may assume that $a=1$ without loss of generality. Let $w=1^\ell$. We will fist prove the claim for $\ell=2^r$ using induction on $r$, where the base case of $r=0$ is trivial. Suppose the claim is true for $r-1$ (for some $r\geq 1$), and we will prove it for $r$. The word
$$0^{\ell-1-2^{r-1}}1=0^{2^{r-1}-1}1$$
is a prefix of $S_1(w)^{2^{r-1}}(0^{\ell-1}1)$ (by \Cref{prefix}). Furthermore, the word $$S_1\h{-2}\left(1^{2^{r-1}}\right)^{2^{r-1}}\h{-3}\left(0^{2^{r-1}-1}1\right)$$
is a suffix of $S_1(w)^{2^{r-1}}(0^{\ell-1}1)$ (by \Cref{last-letter}). By the induction hypothesis, this means that
$$0^{2^{r-1}-1}1$$
is a suffix of $S_1(w)^{2^{r-1}}(0^{\ell-1}1)$. Combining this prefix and suffix information yields that
\begin{align}
S_1(w)^{2^{r-1}}\h{-2}\left(0^{\ell-1}1\right)&=\left(0^{2^{r-1}-1}1\right)^2\label{halfway}\\
&=0^{\ell-1}1\oplus 0^{2^{r-1}-1}10^{2^{r-1}}.\nonumber
\end{align}
Applying $S_1(w)^{2^{r-1}}$ to both sides and using (b) of \Cref{observe}, one obtains
\begin{align*}
S_1(w)^{\ell}\h{-1}\left(0^{\ell-1}1\right)&=S_1(w)^{2^{r-1}}\h{-2}\left(0^{\ell-1}1\right)\oplus S_1(w)^{2^{r-1}}\h{-2}\left(0^{2^{r-1}-1}10^{2^{r-1}}\right)\\
&=\left(0^{2^{r-1}-1}1\right)^2\oplus 0^{2^{r-1}-1}10^{2^{r-1}}\text{ (using \eqref{halfway} and the induction hypothesis)}\\
&=0^{\ell-1}1.
\end{align*}
Consequently, $\Lambda_1(w)(0^{\ell-1}1)$ is a factor of $\ell=2^r$. Also, $\Lambda_1(w)(0^{\ell-1}1)> 2^{r-1}$ by \eqref{halfway}. This proves the lemma for $\ell=2^r$. Next, suppose $2^{r-1}<\ell\leq 2^r$, for some $r>0$. On one hand, \Cref{increasing} and \Cref{power of 2}, together with what was shown above, yields that $\Lambda_1(w)(0^{\ell-1}1)$ is either $2^{r-1}$ or $2^r$. On the other hand, we know that $0^{\ell-1-2^{r-1}}1$ is a prefix of $S_1(w)^{2^{r-1}}(0^{\ell-1}1)$ (since $\ell>2^{r-1}$). In particular,
$$S_1(w)^{2^{r-1}}\h{-2}\left(0^{\ell-1}1\right)\neq 0^{\ell-1}1.$$
The lemma now follows.
\end{proof}

\begin{cor}\label{single-run-cycle-length}
Let $a\in\{0,1\}$, and suppose $u$ has last letter $1$. Then
$$\Lambda_a\h{-2}\left(a^\ell\right)\h{-2}(u)=2^{\ceil{\log_2(\ell)}}.$$
\end{cor}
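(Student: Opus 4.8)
The plan is to compute the map $S_a(a^\ell)$ explicitly and recognise it, as a linear automorphism of the $\mathbb{F}_2$-vector space $\{0,1\}^\ell$, as a unipotent map whose deviation from the identity is a nilpotent shift. First I would substitute $w=a^\ell$ into the defining identity \eqref{combined}. Since $\overline{a}^\ell\oplus a^\ell=1^\ell$, its right-hand side collapses to $\left[1^\ell0\land u0\right]\oplus 0u=u0\oplus 0u$, whose first coordinate is $T_a(a^\ell)(u)=u_1$ and whose remaining $\ell$ coordinates give
$$S_a(a^\ell)(u)=(u_1\oplus u_2)(u_2\oplus u_3)\cdots(u_{\ell-1}\oplus u_\ell)\,u_\ell.$$
(That $S_a(a^\ell)$ is genuinely $\oplus$-linear is part (a) of \Cref{observe}.) Writing $\mathrm{id}$ for the identity map and setting $D:=S_a(a^\ell)\oplus\mathrm{id}$, where $(S_a(a^\ell)\oplus\mathrm{id})(u):=S_a(a^\ell)(u)\oplus u$, the key observation is that $D$ is the left shift
$$D(u_1u_2\cdots u_\ell)=u_2u_3\cdots u_\ell0.$$
It is nilpotent with $D^k(u)=u_{k+1}\cdots u_\ell0^k$ for $0\le k\le\ell$; in particular $D^\ell=0$ while $D^{\ell-1}\neq0$.

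Next I would exploit that we are working over $\mathbb{F}_2$: the freshman's-dream identity gives $(\mathrm{id}\oplus D)^{2^j}=\mathrm{id}\oplus D^{2^j}$ for every $j\ge0$, so $S_a(a^\ell)^{2^j}\oplus\mathrm{id}=D^{2^j}$. Hence $S_a(a^\ell)^{2^j}(u)=u$ if and only if $D^{2^j}(u)=0$. By \Cref{power of 2} the cycle of $S_a(a^\ell)$ through $u$ has length a power of $2$, so $\Lambda_a(a^\ell)(u)$ is exactly the smallest $2^j$ with $D^{2^j}(u)=0$. (Alternatively one can avoid citing \Cref{power of 2}: once $D^{2^j}(u)=0$ for some $j$, the orbit length divides $2^j$ and is therefore automatically a power of $2$.)

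The final step uses the hypothesis that $u$ has last letter $1$. For $k<\ell$ the word $D^k(u)=u_{k+1}\cdots u_\ell0^k$ carries the value $u_\ell=1$ in its $(\ell-k)$-th coordinate, so $D^k(u)\neq0$; and $D^k(u)=0$ for $k\ge\ell$. Thus $D^{2^j}(u)=0$ if and only if $2^j\ge\ell$, and the least such power of $2$ is $2^{\ceil{\log_2\ell}}$, which is exactly the claimed value of $\Lambda_a(a^\ell)(u)$. Specialising $u=0^{\ell-1}1$ recovers \Cref{cycle-length}.

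I expect the only real obstacle to be the bookkeeping in the first step: correctly reading off $S_a(a^\ell)$ from \eqref{combined} and confirming that $D$ is precisely the shift, paying attention to the final coordinate, where $u_\ell$ is fixed, which is consistent with the last-letter preservation of \Cref{last-letter}. Once the shift structure is established, the remaining steps are the routine characteristic-$2$ binomial identity and a one-line support computation, with \Cref{power of 2} (or the divisibility remark) supplying the passage from "$S_a(a^\ell)^{2^j}$ fixes $u$'' to the exact orbit length.
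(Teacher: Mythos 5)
Your argument is correct, but it follows a genuinely different route from the paper's. The paper's proof is a short reduction: it decomposes $u$ as the $\oplus$-sum of the single-$1$ words supported at the positions where $u$ has a $1$, applies the additivity of $S_a(a^\ell)$ from \Cref{observe} to split the orbit accordingly, and then concludes from \Cref{cycle-length} (itself proved by an induction on powers of $2$ via the prefix and suffix lemmas): the hypothesis that $u$ ends in $1$ guarantees the term $0^{\ell-1}1$, of period exactly $2^{\ceil{\log_2(\ell)}}$, is present, while the remaining single-$1$ terms have periods that are powers of $2$ dividing it. You instead read off from \eqref{combined} the explicit formula $S_a(a^\ell)(u)=(u_1\oplus u_2)\cdots(u_{\ell-1}\oplus u_\ell)u_\ell$ (which is consistent with the paper's \eqref{s_w}), identify $S_a(a^\ell)=\mathrm{id}\oplus D$ with $D$ the nilpotent left shift on the $\mathbb{F}_2$-vector space $\{0,1\}^\ell$, and use the characteristic-$2$ identity $(\mathrm{id}\oplus D)^{2^j}=\mathrm{id}\oplus D^{2^j}$ together with the support of $D^k(u)$ (nonzero for $k<\ell$ because $u_\ell=1$, zero for $k\geq\ell$) to pin down the minimal period; your divisibility remark correctly lets you dispense with \Cref{power of 2}. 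What your approach buys: it is self-contained, reproving \Cref{cycle-length} as the special case $u=0^{\ell-1}1$ rather than invoking it, and it exposes the unipotent structure of $S_a(a^\ell)$, from which every power $S_a(a^\ell)^n=\bigoplus_i\binom{n}{i}D^i \pmod 2$ is explicit. What the paper's approach buys: it stays entirely within the additivity/prefix/suffix toolkit that is reused later for words that are not single runs (e.g.\ in \Cref{opposite} and \Cref{direct-sum}), where no comparably clean closed form for $S_a(w)$ is available.
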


\begin{proof}
Let $w=a^\ell$ and $R=\{r\in[\ell]\mid u_r=1\}$. Evidently, we have
$$u=\bigoplus_{r\in R}0^{\ell-r}10^{r-1}.$$
(b) of \Cref{observe} yields that
\begin{equation*}
S_a(w)^i(u)=\bigoplus_{r\in R}S_a(w)^i\h{-2}\left(0^{\ell-r}10^{r-1}\right).
\end{equation*}
The corollary now follows from \Cref{cycle-length}. 
\end{proof}

\begin{lem}\label{single-run}
For $a\in\{0,1\}$ and $\ell\geq 2$, the cycle of $S_a(a^\ell)$ containing $0^{\ell-1}1$ has an odd number of words with first letter $1$ if and only if $\ell$ is a power of $2$.
\end{lem}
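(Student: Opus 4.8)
The plan is to reduce the statement to a concrete $\mathbb{F}_2$-linear computation and then count with Lucas's theorem. By \Cref{symmetry} we may assume $a=1$ and set $w=1^\ell$. First I would write $S_1(1^\ell)$ explicitly: from \eqref{combined} (equivalently, the recursion in the proof of \Cref{permutations}) one gets, for $u=u_1\ldots u_\ell$,
$$S_1(1^\ell)(u)_i=u_i\oplus u_{i+1}\ \ (1\le i<\ell),\qquad S_1(1^\ell)(u)_\ell=u_\ell.$$
Thus $S_1(1^\ell)$ is $\mathbb{F}_2$-linear, and as a matrix $B$ acting on column vectors it equals $I+N$, where $N$ sends $(u_1,\ldots,u_\ell)$ to $(u_2,\ldots,u_\ell,0)$ and satisfies $N^\ell=0$.

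Next I would iterate. Over $\mathbb{F}_2$ the binomial theorem gives $B^k=\sum_{j\ge0}\binom{k}{j}N^j$, and since $N^j$ has its $1$'s on the $j$-th superdiagonal, the $(1,\ell)$ entry of $B^k$ is $\binom{k}{\ell-1}\bmod 2$. As $0^{\ell-1}1$ is the $\ell$-th standard basis vector, the first letter of $S_1(1^\ell)^k(0^{\ell-1}1)$ equals $\binom{k}{\ell-1}\bmod 2$. By \Cref{cycle-length} the cycle of $S_1(1^\ell)$ through $0^{\ell-1}1$ has length $L=2^{\ceil{\log_2\ell}}$, so it consists precisely of the distinct words $S_1(1^\ell)^k(0^{\ell-1}1)$ for $0\le k<L$. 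The number to control is therefore
$$\#\left\{\,0\le k<L:\binom{k}{\ell-1}\text{ is odd}\,\right\}.$$

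Finally I would evaluate this count. By Lucas's theorem $\binom{k}{\ell-1}$ is odd exactly when the binary expansion of $\ell-1$ is a submask of that of $k$. Put $t=\ceil{\log_2\ell}$ and let $s$ be the number of $1$'s in the binary expansion of $\ell-1$. Since $\ell-1<2^t$, all $1$-bits of $\ell-1$ lie in the low $t$ positions, so the admissible $k\in[0,2^t)$ are obtained by letting the remaining $t-s$ bit positions vary freely; there are $2^{t-s}$ of them. This is odd iff $s=t$. When $\ell=2^r$ we have $t=r$ and $\ell-1=2^r-1$, whose $r$ low bits are all $1$, so $s=t$ and the count is $1$ (odd); when $2^{r-1}<\ell<2^r$ we have $t=r$ but $\ell-1\ne2^r-1$, so $s<t$ and the count is even. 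This yields the claimed equivalence.

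I expect the crux to be the middle step, namely recognising that the first letters along the orbit are governed by $\binom{k}{\ell-1}\bmod 2$; the computation $B=I+N$ over $\mathbb{F}_2$ is what makes this transparent. Given that identification, the cycle length is handed to us by \Cref{cycle-length} and the parity count reduces to the elementary fact that the binary digit sum of $\ell-1$ equals $\ceil{\log_2\ell}$ if and only if $\ell$ is a power of $2$.
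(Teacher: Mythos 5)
Your proof is correct, but it takes a genuinely different route from the paper. The paper argues by ``lifting'': it shows (in the spirit of \Cref{promoted-footnote}) that $\Lambda_1(1\cdot 1^\ell)(0^{\ell}1)=2\,\Lambda_1(1^\ell)(0^{\ell-1}1)$ precisely when the cycle of $S_1(1^\ell)$ through $0^{\ell-1}1$ contains an odd number of words with first letter $1$, and then reads off the parity by applying \Cref{cycle-length} to both $\ell$ and $\ell+1$, since $\ceil{\log_2(\ell+1)}=\ceil{\log_2 \ell}+1$ exactly when $\ell$ is a power of $2$. You instead compute the first letters along the orbit in closed form: from \eqref{combined}, $S_1(1^\ell)$ is the $\mathbb{F}_2$-linear map $B=I+N$ with $N$ the nilpotent shift, so the first letter of $S_1(1^\ell)^k(0^{\ell-1}1)$ is $\binom{k}{\ell-1}\bmod 2$, and Lucas's theorem gives the exact count $2^{t-s}$ of first-letter-$1$ words in the cycle (with $t=\ceil{\log_2\ell}$ and $s$ the binary digit sum of $\ell-1$), which is odd iff $\ell-1=2^t-1$. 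All the steps check out: the explicit formula $S_1(1^\ell)(u)_i=u_i\oplus u_{i+1}$, $S_1(1^\ell)(u)_\ell=u_\ell$ follows directly from \eqref{combined}, linearity is consistent with (a) of \Cref{observe}, and your use of \Cref{cycle-length} to enumerate the cycle as $\{B^k(0^{\ell-1}1)\mid 0\le k<2^t\}$ involves no circularity since that lemma is proved independently. What your approach buys is strictly more information (an exact count rather than a parity, and in fact $B^{2^t}=I+N^{2^t}=I$ would let you reprove \Cref{cycle-length} itself in one line); what it costs is the appeal to Lucas's theorem, an external though standard fact, whereas the paper stays entirely within its own orbit machinery.
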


\begin{proof}
By \Cref{symmetry}, we may assume, without loss of generality, that $a=1$. Let $w=1^\ell$ and $L=\Lambda_1(1^{\ell})(0^{\ell-1}1)$. We claim that $\Lambda_1(1w)(0^{\ell}1)=2L$ if, and only if, the cycle of $S_1(w)$ containing $0^{\ell-1}1$ has an odd number of words with first letter $1$. Using this claim, together with \Cref{cycle-length}, the statement of the lemma follows.\\

The above claim essentially follows from \Cref{promoted-footnote}, but we reproduce the argument here. Let $u\in\{0,1\}^{\ell+1},b\in\{0,1\}$, and $u'\in\{0,1\}^{\ell}$ with $u=bu'$. Let the first letter of $u'$ be $c$. From the definition \eqref{logical-defn}, the first letter of $S_1(1w)(u)$ is
\begin{align*}
(1\land c)\oplus b&=c\oplus b.
\end{align*}
Hence, one has
$$S_1(1w)(u)=(c\oplus b)S_1(w)(u').$$
Repeated applications of the above yield that if the cycle of $S_1(w)$ containing $0^{\ell-1}1$ has an even number of words with first letter $1$, then $S_1(1w)^{L}(0^{\ell}1)=0^l1$, and otherwise $S_1(1w)^{L}(0^{\ell}1)=10^{\ell-1}1$. In the first case we see that $\Lambda_1(1w)(0^\ell1)=L$, and in the second case we see that $\Lambda_1(1w)(0^\ell1)=2L$. This proves the above claim.
\end{proof}

\section{Sufficient conditions for satisfying {\em Properties P} and \emph{Q}}\label{sec6}

\begin{thm}\label{simple}
Let $a\in\{0,1\}$ and $w\in\{0,1\}^\ast$ (possibly with $|w|<2$). Let $k>1$ be a power of $2$ and $1<j\leq k$. The pair $\left(a^kw\ol{a}a^{j-1},0^{k-1}10^{|w|+j-1}1\right)$ satisfies {\em Property Q}.
\end{thm}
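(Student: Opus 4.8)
The plan is to verify the hypotheses of \Cref{imp-suff-cond} for the pair $(W,U)$, where I abbreviate $W=a^kw\ol{a}a^{j-1}$ and $U=0^{k-1}10^{|w|+j-1}1$ and set $\ell=|W|=|U|=k+|w|+j$. I apply the lemma with its two distinguished letters taken to be $\ol{a}$ (playing the role requiring a fixed point) and $a$ (playing the role requiring an odd cycle). Writing $e_i:=0^{i-1}10^{\ell-i}$ for the unit words, the first observation is that $U=e_k\oplus e_\ell$, since the two $1$'s of $U$ sit at position $k$ and at the final position $\ell$. Because positions $k$ and $\ell$ of $W$ both carry the letter $a$, the relations \eqref{s_w} give $S_{\ol a}(W)(e_k)=e_k$ and $S_{\ol a}(W)(e_\ell)=e_\ell$, so linearity (part (a) of \Cref{observe}) yields $S_{\ol a}(W)(U)=U$; moreover $T_{\ol a}(W)(U)=0$ because $U$ has first letter $0$. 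This disposes of the first two hypotheses at once, and all the work is in the cycle-counting condition for $S_a(W)$.

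For that condition the key idea is to read $S_a(W)$ as the $\mathbb{F}_2$-linear operator $A=I+N$ on $\mathbb{F}_2^\ell$, where, by \eqref{s_w}, $Ne_i=e_{i-1}$ when $i\geq 2$ and $W_i=a$, and $Ne_i=0$ otherwise. Thus $N$ propagates a $1$ leftward through maximal runs of the letter $a$ in $W$, and it is stopped by any occurrence of $\ol a$. Concretely, positions $1,\ldots,k$ of $W$ are all $a$, giving $N^t e_k=e_{k-t}$ for $0\leq t\leq k-1$ and $N^t e_k=0$ for $t\geq k$; and the trailing block $a^{j-1}$ is bounded on the left by the explicit $\ol a$ at position $k+|w|+1$, giving $N^t e_\ell=e_{\ell-t}$ for $0\leq t\leq j-1$ and $N^t e_\ell=0$ for $t\geq j$. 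Expanding $A^m=(I+N)^m=\sum_t\binom{m}{t}N^t\pmod 2$ then produces $A^m e_k=\sum_{t=0}^{k-1}\binom{m}{t}e_{k-t}$ and $A^m e_\ell=\sum_{t=0}^{j-1}\binom{m}{t}e_{\ell-t}$, whose supports $\{1,\ldots,k\}$ and $\{k+|w|+1,\ldots,\ell\}$ are disjoint since $|w|\geq 0$.

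This decoupling makes everything explicit. Because the two supports are disjoint, $A^m U=U$ holds if and only if $\binom{m}{t}$ is even for all $1\leq t\leq k-1$ and for all $1\leq t\leq j-1$; by the same binomial computation that underlies \Cref{cycle-length}, the orbits of $e_k$ and $e_\ell$ have sizes $2^{\ceil{\log_2 k}}$ and $2^{\ceil{\log_2 j}}$, so the $S_a(W)$-cycle of $U$ has length $\mathrm{lcm}\!\left(2^{\ceil{\log_2 k}},2^{\ceil{\log_2 j}}\right)=k$, using that $k$ is a power of $2$ and $1<j\leq k$. Its members are exactly $A^0U,\ldots,A^{k-1}U$, a genuine cycle of $S_a(W)|_{\calO_W(U)}$. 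Since $A^m e_\ell$ never touches position $1$, the first letter of $A^m U$ equals the first coordinate of $A^m e_k$, namely $\binom{m}{k-1}\pmod 2$. Writing $k=2^r$ and invoking Lucas' theorem, $\binom{m}{2^r-1}$ is odd for $0\leq m<2^r$ precisely when $m=2^r-1$, so the cycle contains exactly one word with first letter $1$. As this count is odd, \Cref{imp-suff-cond} concludes that $(W,U)$ satisfies \emph{Property Q}.

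The main obstacle—and the point at which the hypothesis ``$k$ a power of $2$'' becomes essential—is this final count. More generally Lucas' theorem shows that the number of $m\in\{0,\ldots,2^{\ceil{\log_2 k}}-1\}$ with $\binom{m}{k-1}$ odd is a power of $2$ whose exponent is $\ceil{\log_2 k}$ minus the number of $1$'s in the binary expansion of $k-1$; this count is odd (indeed equal to $1$) exactly when $k-1=2^{\ceil{\log_2 k}}-1$ is a string of ones, that is, when $k$ is a power of $2$. The disjointness of the two supports is what lets both the period and the first-letter count be read off from the single run $e_k$ alone, so that an arbitrary middle block $w$—which may plant further barriers inside $W$—causes no difficulty, since the chosen unit words $e_k$ and $e_\ell$ sidestep those interior complications entirely.
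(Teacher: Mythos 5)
Your proof is correct, and it follows the paper's overall strategy: you verify the hypotheses of \Cref{imp-suff-cond} with the same assignment of roles ($\ol{a}$ as the letter fixing the word, $a$ as the letter supplying the odd cycle) and the same splitting of $0^{k-1}10^{|w|+j-1}1$ into the two unit words supported at positions $k$ and $\ell$. Where you diverge is in the analysis of the $S_a$-cycle: the paper uses (b) of \Cref{observe} to identify the two pieces with the single-run actions $S_a\left(a^k\right)$ on $0^{k-1}1$ and $S_a\left(a^{j}\right)$ on $0^{j-1}1$, then quotes \Cref{increasing}, \Cref{cycle-length} and \Cref{single-run} to obtain the cycle length and the odd count of words with first letter $1$; you instead exploit the $\mathbb{F}_2$-linearity of $S_a(W)$ (part (a) of \Cref{observe}) to write it as $I+N$ with $N$ nilpotent along each run of $a$'s, expand $(I+N)^m$ binomially, and read off both the period and the first letter of the $m$-th iterate as $\binom{m}{k-1}\bmod 2$, which by Lucas' theorem is odd for exactly one $m\in\{0,\dots,k-1\}$ precisely because $k$ is a power of $2$. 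Your route is more self-contained and makes the role of the power-of-$2$ hypothesis completely explicit (in effect it re-proves \Cref{cycle-length} and the needed direction of \Cref{single-run} in closed form), whereas the paper's route reuses its general single-run lemmas, which it needs anyway for \Cref{one-run}, \Cref{long-prefix} and \Cref{two-runs}; one minor attribution quibble is that \Cref{cycle-length} is proved in the paper by induction rather than by the binomial computation you ascribe to it, but your own computation stands independently, so this does not affect correctness.
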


\begin{proof}
Let $\ell=|w|$, $w'=a^kw\ol{a}a^{j-1}$ and $u'=0^{k-1}10^{\ell+j-1}1$. We will show that $w'$ and $u'$ satisfy the hypothesis of \Cref{imp-suff-cond} by showing that the following hold.
\begin{enumerate}[(i)]
\item $S_{\ol{a}}(w')(u')=u'$.

\item $T_{\ol{a}}(w')(u')=0$.

\item The cycle of $S_a(w')$ containg $u'$ has an odd number of words with first letter $1$.
\end{enumerate}
(i) and (ii) are easy to verify (we need $j>1$ for (i) to hold and $k>1$ for (ii) to hold). For (iii), one obtains the following using (b) of \Cref{observe}.
\begin{align}
S_a(w')^i(u')&=S_a(w')^i\h{-2}\left(0^{k-1}10^{\ell+j}\right)\oplus S_a(w')^i\h{-2}\left(0^{k+\ell+j-1}1\right)\nonumber\\
&=\left(S_a\h{-1}\left(a^k\right)^i\h{-2}\left(0^{k-1}1\right)0^{\ell+j}\right)\oplus\left(0^{k+\ell}S_a\h{-1}\left(\ol{a}a^{j-1}\right)^i\h{-2}\left(0^{j-1}1\right)\right)\nonumber\\
&=\left(S_a\h{-1}\left(a^k\right)^i\h{-2}\left(0^{k-1}1\right)0^{\ell+j}\right)\oplus\left(0^{k+\ell}S_a\h{-1}\left(a^{j}\right)^i\h{-2}\left(0^{j-1}1\right)\right).\label{prefix1}
\end{align}
By \Cref{increasing}, one has
$$\Lambda_a\h{-1}\left(a^{j}\right)\h{-2}\left(0^{j-1}1\right)\leq\Lambda_a\h{-1}\left(a^{k}\right)\h{-2}\left(0^{k-1}1\right).$$
Hence, \eqref{prefix1} yields that
\begin{equation}\label{cycle1}
\Lambda_a(w')(u')=\Lambda_a\h{-1}\left(a^{k}\right)\h{-2}\left(0^{k-1}1\right).
\end{equation}
The cycle of $S_a(a^k)$ containing $0^{k-1}1$ has an odd number of words with first letter $1$ (by \Cref{single-run}, since $k$ is a power of $2$), so \eqref{prefix1} and \eqref{cycle1} show that the cycle of $S_a(w')$ containing $u'$ also has an odd number words with first letter $1$.
\end{proof}

Although \Cref{imp-suff-cond} only gives a sufficient condition for \em{Property P} to to be satisfied, below we demonstrate how the linear algebraic machinery developed so far can also be used to examine its failure in some simple situations.

\begin{thm}\label{one-run}
Let $a\in\{0,1\}$ and $\ell\geq2$. \em{Property P} is satisfied by $w:=a^\ell$ if and only if $\ell$ is a power of $2$.
\end{thm}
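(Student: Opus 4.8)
The plan is to prove both directions of the equivalence for $w=a^\ell$, treating the ``if'' direction via the sufficient condition already established and the ``only if'' direction by a direct analysis of the eigenvalues of $M(a^\ell)$. For the forward (``if'') direction, suppose $\ell$ is a power of $2$. I would invoke \Cref{imp-suff-cond} with the pair $(w,u)=(a^\ell,0^{\ell-1}1)$: by \Cref{imp-remark} the hypotheses $S_{\overline{a}}(w)(u)=u$ and $T_{\overline{a}}(w)(u)=0$ are automatic for $u=0^{\ell-1}1$, so it remains to exhibit a cycle of $S_a(a^\ell)|_{\calO_w}$ with an odd number of words having first letter $1$. But \Cref{single-run} states exactly that the cycle of $S_a(a^\ell)$ containing $0^{\ell-1}1$ has an odd number of words with first letter $1$ precisely when $\ell$ is a power of $2$. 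Hence \Cref{imp-suff-cond} applies and $w=a^\ell$ satisfies \emph{Property P}.

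For the reverse (``only if'') direction I would argue the contrapositive: if $\ell$ is \emph{not} a power of $2$, then $M(a^\ell)$ has an eigenvalue of absolute value $2$, so by \Cref{eval-growth} the growth bound $O(N^{1-\epsilon})$ need not hold, and indeed I expect to show the partial sum grows like $N$ (or at least that \emph{Property P} fails). By \Cref{symmetry} I may assume $a=1$, so $w=1^\ell$. The strategy is to build an explicit eigenvector of $M(1^\ell)$ with eigenvalue $2$. By \Cref{eigenval-2}(a), such a vector must simultaneously satisfy $M_0(w)v=M_1(w)v=v$, and by part (b) all its entries are nonzero; conversely, producing a common $+1$-eigenvector of $M_0(w)$ and $M_1(w)$ with all entries nonzero yields an eigenvalue-$2$ eigenvector of $M(w)=M_0(w)+M_1(w)$. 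By \Cref{sign-perm-eigenval}, $M_a(w)$ has a nonvanishing $+1$-eigenvector if and only if every cycle of $\sigma_{M_a(w)}$ has an even number of $-1$'s, i.e.\ (using the first-letter characterization preceding \Cref{imp-suff-cond}) an even number of words with first letter $1$. Thus the key is to show that when $\ell$ is not a power of $2$, \emph{every} cycle of both $S_0(1^\ell)|_{\calO_w}$ and $S_1(1^\ell)|_{\calO_w}$ contains an even number of words with first letter $1$.

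The main obstacle is this cycle-counting statement for all cycles, not merely the distinguished cycle through $0^{\ell-1}1$. For the cycle through $0^{\ell-1}1$ itself, \Cref{single-run} already gives an even count when $\ell$ is not a power of $2$. To handle an arbitrary $u\in\calO_w$ (which by \Cref{last-letter} has last letter $1$), I would decompose $u=\bigoplus_{r\in R}0^{\ell-r}10^{r-1}$ over its support $R$ and use the linearity in \Cref{observe}(a) together with \Cref{single-run-cycle-length}, which pins the cycle length of each generator $0^{\ell-r}10^{r-1}$ under $S_a(a^\ell)$ to the common value $2^{\lceil\log_2\ell\rceil}$. Since $\ell$ is not a power of $2$, this length strictly exceeds $\ell$, and I expect the ``first-letter'' counting to reduce—via \eqref{s_w} and \Cref{prefix}—to summing parities of first letters over a full orbit whose length $2^{\lceil\log_2\ell\rceil}$ forces an even count. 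I anticipate the delicate point is bookkeeping the first-letter contributions of $S_a(w)^i(u)$ coherently across the shifted generators, so I would organize the argument so that the parity of first-letter-$1$ occurrences in the cycle of $u$ equals a sum of the corresponding parities for the single-$1$ words $0^{\ell-r}10^{r-1}$, each of which is even by the $\ell$-not-a-power-of-$2$ case of \Cref{single-run} applied after a suffix reduction via \Cref{increasing}. Once every cycle has an even number of first-letter-$1$ words, \Cref{sign-perm-eigenval} furnishes the common nonvanishing $+1$-eigenvector, producing the eigenvalue $2$ and establishing the failure of \emph{Property P}.
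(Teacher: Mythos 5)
Your ``if'' direction coincides with the paper's: \Cref{single-run} plus \Cref{imp-suff-cond} (with \Cref{imp-remark}) is exactly how the paper handles the case where $\ell$ is a power of $2$. The problem is the ``only if'' direction. Exhibiting an eigenvalue-$2$ eigenvector of $M(a^\ell)$ does \emph{not} establish the failure of \emph{Property P}: \Cref{eval-growth} is a one-way implication (all eigenvalues of modulus $<2$ imply the $O(N^{1-\epsilon})$ bound), and its converse is not available — indeed the paper's conclusion even remarks that one expects words satisfying \emph{Property P} whose matrix nevertheless has $2$ as an eigenvalue. To show the partial sum genuinely grows linearly you must show that the vector being iterated actually has a nonzero component along the eigenvalue-$2$ eigendirection. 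This is where the bulk of the paper's proof lives: writing $V(2^n-1)=v(0)+\sum_{i=0}^{n-1}M^i v(1)$ via \Cref{matrix-recurrence}, noting $M(w)=M_a(w)+I$ is diagonalizable with an orthonormal eigenbasis and distinct eigenvalues (by \Cref{sign-cyclic-eigenval}) with $2$ the only eigenvalue of modulus $\geq 2$ (by \Cref{eigenval-2}(c)), and then proving $\langle v(w)(1),x\rangle\neq 0$ for the explicit $\pm1$ eigenvector $x$, which requires the prefix information from \Cref{prefix} and a case analysis on $a$ and on whether $\ell=2^{L-1}+1$ or $\ell\geq 2^{L-1}+2$. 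Your proposal stops at the existence of the eigenvector, so the key quantitative step is missing.

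A secondary, non-fatal point: for $w=a^\ell$ the map $S_{\overline{a}}(w)$ is the identity (and $T_{\overline{a}}(w)\equiv 0$), so $\calO_w$ is a single cycle of $S_a(w)$, namely the orbit of $0^{\ell-1}1$, of length $2^{\lceil\log_2\ell\rceil}$ by \Cref{cycle-length}. Hence your program of proving an even first-letter-$1$ count for \emph{every} cycle of both $S_0(1^\ell)|_{\calO_w}$ and $S_1(1^\ell)|_{\calO_w}$, via the decomposition of an arbitrary $u\in\calO_w$ over its support, is unnecessary: \Cref{single-run} applied to the one relevant cycle already gives the even count when $\ell$ is not a power of $2$, and $M_{\overline a}(w)=I$ trivially has $+1$-eigenvectors. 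So the existence of the eigenvalue-$2$ eigenvector is easier than you anticipate, but the deduction you draw from it is the genuine gap.
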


\begin{proof}
If $\ell$ is a power of $2$, the claim follows using \Cref{single-run} and \Cref{imp-suff-cond}. Next, suppose $\ell$ is not a power of $2$. Since $S_{\overline{a}}(w)$ is the identity permutation, one has
$$\calO_w=\left\{S_a(w)^i\h{-2}\left(0^{\ell-1}1\right)\mid i\geq 0\right\}.$$
Using this observation together with \Cref{cycle-length}, we may index the elements of $\calO_w$ as
$$u_i:=S_a(w)^{i}\h{-2}\left(0^{\ell-1}1\right)=\sigma_{M_a(w)}^i\h{-2}\left(0^{\ell-1}1\right)\text{, for }L=\ceil{\log_2(\ell)}\text{ and }0\leq i<2^{L}.$$
From \Cref{single-run} and \Cref{sign-cyclic-eigenval}, we know that $M_a(w)$ has an eigenvector $x$ with eigenvalue $1$ and entries in $\{1,-1\}$. Since $\sigma_{M_{a}(w)}=(u_0\,u_2\,\ldots\, u_{2^L-1})$ (in cyclic notation), we can use the same idea as that used in the proof of \Cref{sign-cyclic-eigenval} to find this eigenvector explicitly. Let $R$ be the set of $r\in[2^L]$ such that the $u_r$-th row of $M_a(w)$ has a $-1$. Set the $u_q$-th entry $x(u_q)$ of $x$ to
$$x(u_q)=(-1)^{\hash\{r\geq q\,\mid\, r\in R\}}.$$
The $u_r$-th row of $M_a(w)$ has a $-1$ if and only if $T_a(u_{r})=1$, so one obtains that
$$x(u_q)=(-1)^{\hash\{r\geq q\,\mid\,T_a(u_{r})=1\}}.$$
Also, $T_a(u_r)=1$ if and only if $u_r$ has first letter $1$, so
$$x(u_q)=(-1)^{\hash\{r\geq q\,\mid\,u_r\text{ has first letter $1$}\}}.$$
$0^{\ell-1-r}$ is a prefix of $S_a(w)^{r}(0^{\ell-1}1)=u_r$ for $0\leq r<\ell$ (by \Cref{prefix}), so $1$ cannot be the first letter of $u_r$ for $0\leq r<\ell-1$. Thus, one has
\begin{align}\label{obs1}
x(u_q)=(-1)^{|R|}\text{ for }0\leq q\leq \ell-1.
\end{align}
Using \Cref{matrix-recurrence}, together with induction on $n$, yields the following, where $M=M(w), V=V(w),=v(w)$ and $c=c(w)$.
\begin{align}
V\hspace{-2pt}\left(2^n-1\right)&=M^nV(0)+\sum_{i=0}^{n-1}M^ic\nonumber\\
&=M^nv(0)+\sum_{i=0}^{n-1}M^i((I-M)v(0)+v(1))\nonumber\\
&=v(0)+\sum_{i=0}^{n-1}M^{i}v(1)\label{recurrence}.
\end{align}
We will now show that $\langle v(w)(1),x\rangle\neq 0$, where $\langle\cdot,\cdot\rangle$ is the standard inner product. Since $M(w)=M_a(w)+M_{\overline{a}}(w)=M_a(w)+I$, and $M_a(w)$ is an orthonormal matrix with all eigenvalues distinct (by \Cref{sign-cyclic-eigenval}), $M(w)$ has an orthomormal eigenbasis and all eigenvalues distinct. Furthermore, $2$ is the only possible eigenvalue of $M(w)$ with absolute value at least $2$ (by (c) of \Cref{eigenval-2}). Consequently, if we show that $\langle v(w)(1),x\rangle\neq 0$, then, by \eqref{recurrence}, there would exist $\delta>0$ such that all entries of $V(w)\hspace{-2pt}\left(2^n-1\right)$ have absolute value at least $2^n\delta$ (since all entries of $x$ are non-zero). This would prove the theorem.\\

To show that $\langle v(w)(1),x\rangle\neq 0$, it suffices to show that at least $2^{L-1}+1$ of the $2^L$ terms in the component-wise sum for $\langle v(w)(1),x\rangle$ are equal (since all terms take values $\pm1$).\\

\textbf{Case I:} $a=0$.\\
All entries of $v(w)(1)$ are $1$. Combining this with \eqref{obs1}, one has that at least $\ell>2^{L-1}$ terms in the component-wise sum for $\langle v(w)(1),x\rangle$ are equal.\\

\textbf{Case II:} $a=1$.\\
The $u_q$-th entry of $v(w)(1)$ is $1$ if the first letter of $u_q$ is $0$ and it is $-1$ otherwise. Since $0^{\ell-1-r}$ is a prefix of $u_r$ for $0\leq r<\ell-1$ (as noted previously), the $u_q$-th entry of $v(w)(1)$ is $1$ for $0\leq q<\ell-1$.\\

\textbf{Case IIa:} $\ell\geq 2^{L-1}+2$.\\
Combining the above observation with \eqref{obs1}, we see that at least $\ell-1>2^{L-1}$ terms in the component-wise sum for $\langle v(w)(1),x\rangle$ are equal.\\

\textbf{Case IIb:} $\ell=2^{L-1}+1$.\\
By the argument used to prove \Cref{matrix-size-bound-1}, we know that
$$S_1(w)^{2^{L-1}}\hspace{-3pt}\left(0^{2^{L-1}}1\right)$$
has first letter $1$. By \Cref{last-letter}, it also has suffix
$$S_1\hspace{-2pt}\left(1^{2^{L-1}}\right)^{2^{L-1}}\hspace{-3pt}\left(0^{2^{L-1}-1}1\right).$$
By \Cref{cycle-length}, this suffix is just $0^{2^{L-1}-1}1$. Hence, one has 
$$S_1(w)^{2^{L-1}}\hspace{-3pt}\left(0^{2^{L-1}}1\right)=10^{2^{L-1}-1}1.$$
By (b) of \Cref{observe}, this yields
\begin{align*}
S_1(w)^{2^{L-1}+r}\hspace{-2pt}\left(0^{2^{L-1}}1\right)&=S_1(w)^{r}\hspace{-2pt}\left(10^{2^{L-1}}\right)\oplus S_1(w)^r\hspace{-2pt}\left(0^{2^{L-1}}1\right)\\
&=10^{2^{L-1}}\oplus S_1(w)^r\hspace{-2pt}\left(0^{2^{L-1}}1\right).
\end{align*}
The second term in the above has prefix $0^{\ell-1-r}$ for $0\leq r<\ell-1=2^{L-1}$ (as noted previously), so, in toto, the first letter of
$$S_1(w)^{r}\hspace{-2pt}\left(0^{2^{L-1}}1\right)$$
is $1$ for $2^{L-1}\leq r<2^L$. Combining this with the earlier observation for case II, one has
\begin{align*}
u_q\text{-th entry of }v(w)(1)=\begin{cases}
1 & 0\leq q< 2^{L-1}\\
-1 & 2^{L-1}\leq q<2^L.
\end{cases}
\end{align*}
Not all entries of $x$ are equal (since $T_1(w)(u)=1$ for at least one $u\in\calO_w$), so, by \eqref{obs1}, there exists an integer $q_0$, with $2^{L-1}<q_0<2^L$, such that the $u_{q_0}$-th entry of $x$ is $-(-1)^{|R|}$. Hence, one has that at least $2^{L-1}+1$ terms in the component-wise sum for $\langle v(w)(1),x\rangle$ are equal.
\end{proof}

\begin{thm}\label{long-prefix}
Let $w\in\{0,1\}^{\ast}$ (possibly with $|w|<2$), $a\in\{0,1\}$, and $k$ be a power of $2$ such that $a^k$ is not a factor of $w$. {\em Property Q} is satisfied by $\left(a^k\overline{a}w,0^{k+|w|}1\right)$. In particular, $w$ satisfies {\em Property P}.\\
More generally, if $u\in\{0,1\}^{|w|}, u\neq 0^{|w|})$, and $b\in\{0,1\}$ such that $S_b(w)(u)=u$ and $T_b(w)(u)=0$, then $\left(a^k\overline{a}w,0^{k+1}u\right)$ satisfies {\em Property Q}.
\end{thm}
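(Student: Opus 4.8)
The plan is to verify the hypotheses of \Cref{imp-suff-cond} for the pair $(w',u')$, where I set $w'=a^k\ol{a}w$, $u'=0^{k+1}u$, and $\ell=|w|$, using the given letter $b$ as the fixing letter and the run letter $a$ as the cycle letter. Concretely I must produce: (i) $S_b(w')(u')=u'$; (ii) $T_b(w')(u')=0$; and (iii) some cycle of $S_a(w')|_{\calO_{w'}(u')}$ with an odd number of words having first letter $1$. For (i), the last $\ell$ letters of $w'$ are $w$ and the last $\ell$ letters of $u'$ are $u$, so by (b) of \Cref{observe} the last $\ell$ coordinates of $S_b(w')(u')$ equal $S_b(w)(u)=u$; a direct check with \eqref{logical-defn} shows the first $k$ coordinates remain $0$, while the $(k+1)$-th coordinate equals $(\ol{b}\oplus w_1)\wedge u_1=T_b(w)(u)=0$. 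Hence $S_b(w')(u')=u'$. For (ii), since $u'$ begins with $0$, the description of the $-1$'s preceding \Cref{imp-suff-cond} gives $T_b(w')(u')=0$ immediately.

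For (iii) the crucial structural point is that under $S_a(w')$ the coordinates split into two autonomous blocks. Because $w'_1=\cdots=w'_k=a$ while $w'_{k+1}=\ol{a}$, formula \eqref{logical-defn} shows the first $k$ coordinates evolve exactly by $S_a(a^k)$ (the letter $\ol{a}$ at position $k+1$ decouples coordinate $k$ from coordinate $k+1$), whereas the last $\ell+1$ coordinates evolve by $S_a(\ol{a}w)$ via (b) of \Cref{observe}. Thus for any $z\in\calO_{w'}(u')$ with first $k$ coordinates $\pi$ and last $\ell+1$ coordinates $\rho$, the $S_a(w')$-cycle $C$ of $z$ has length $\operatorname{lcm}\!\left(\Lambda_a(a^k)(\pi),\Lambda_a(\ol{a}w)(\rho)\right)$, and the number of words in $C$ with first letter $1$ equals $\bigl(|C|/\Lambda_a(a^k)(\pi)\bigr)\,p$, where $p$ is the number of first-letter-$1$ words in the $S_a(a^k)$-orbit of $\pi$. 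I would take $\pi=0^{k-1}1$: by \Cref{cycle-length} its $S_a(a^k)$-orbit has length $k$ (as $k$ is a power of $2$), and by \Cref{single-run} the count $p$ is odd. Since all these lengths are powers of $2$, the cycle count is odd precisely when $\Lambda_a(\ol{a}w)(\rho)\le k$, in which case $|C|=k$ and the count equals $p$.

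Two ingredients remain, and they are exactly where the hypotheses on $k$ and $u$ enter. First, the suffix bound: if $a^k$ is not a factor of $w$ then every maximal run of $a$'s in $\ol{a}w$ has length $\le k-1$, and I claim $\Lambda_a(\ol{a}w)(\rho)\le k$ for all $\rho$. By (a) of \Cref{observe} it suffices to bound the cycle of each single-$1$ word $0^{p-1}10^{\ell+1-p}$; such a $1$ spreads only leftwards through the maximal block $\ol{a}a^{r}$ (with $r\le k-1$) terminating at its position, so by (b) of \Cref{observe}, \Cref{cycle-length} and the doubling dichotomy of \Cref{power of 2} (cf. \Cref{promoted-footnote} and \Cref{single-run}) its cycle length is $2^{\ceil{\log_2(r+1)}}\le k$. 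Second, existence of the right orbit element: since $u\ne 0^{\ell}$ we have $0u\ne 0^{\ell+1}$, so moving the leftmost $1$ to the front as in the proof of \Cref{matrix-size-bound-1} (via \eqref{s_w}) shows $\calO_{\ol{a}w}(0u)$ contains a word with first letter $1$; taking a path to the first such word and lifting it through the $S_{\cdot}(w')$ (which, by the autonomy above, keeps the first $k$ coordinates at $0^{k}$ as long as coordinate $k+1$ is $0$) yields $y=0^{k}1\cdots\in\calO_{w'}(u')$, and then $z:=S_{\ol{a}}(w')(y)$ has first $k$ coordinates exactly $0^{k-1}1$. Applying the count formula to this $z$, together with the suffix bound, produces an $S_a(w')$-cycle with an odd number of first-letter-$1$ words, so \Cref{imp-suff-cond} gives Property Q.

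I expect the suffix bound to be the main obstacle: unlike the single-run situation of \Cref{simple}, the leading run $a^k$ is not activated by $u'$ itself, so I must both manufacture an orbit element whose prefix lands in the odd-count $S_a(a^k)$-orbit of $0^{k-1}1$ and simultaneously prevent the suffix dynamics from enlarging the cycle past $k$; when $a^k$ is a factor of $w$ this bound genuinely fails (a run of length $k$ already gives $\Lambda_a(\ol{a}a^{k})(0^k1)=2k$), so the hypothesis is essential to the argument. Finally, the first assertion is the special case $u=0^{\ell-1}1$ (which satisfies the hypothesis for a suitable $b$), whence $u'=0^{|w'|-1}1$ and \Cref{eval-growth} upgrades Property Q to Property P; the degenerate cases $|w|<2$ are checked directly or reduce to \Cref{one-run} and \Cref{simple}.
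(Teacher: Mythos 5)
Your proposal is correct and follows essentially the same route as the paper: you verify the hypotheses of \Cref{imp-suff-cond} for $(a^k\ol{a}w,0^{k+1}u)$, manufacture an orbit element with prefix $0^{k-1}1$ by marching the leftmost $1$ forward (the paper does this with an explicit word $h$), decouple the $S_a$-dynamics into the $a^k$-prefix and $\ol{a}w$-suffix blocks, and combine \Cref{cycle-length} and \Cref{single-run} with the bound $\Lambda_a(\ol{a}w)(\cdot)\le k$ to get an odd count of first-letter-$1$ words. Your inline single-$1$-decomposition argument for that suffix bound is exactly the content of the paper's \Cref{opposite}, \Cref{same} and \Cref{upper-bound}, so there is no substantive difference in method.
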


\begin{rmk}
The condition $S_b(w)(u)=u$ is equivalent to saying that $w_r=\ol{b}$ whenever $1\leq r\leq |u|$ with $u_r=1$. The condition $T_b(w)(u)=0$ is equivalent to saying that $b$ is not the first letter of $w$ or $1$ is not the first letter of $u$ (cf. the discussion just before \Cref{imp-suff-cond}).
\end{rmk}

{\em In order to apply \Cref{imp-suff-cond} to prove \Cref{long-prefix}, we require some lemmas.}

\begin{lem}\label{opposite}
Let $a\in\{0,1\}$, and $w=\overline{a}^{j_m}a^{i_m}\ldots \overline{a}^{j_1}a^{i_1}$ with $i_r,j_r\geq1$ for all $r$. All cycles of $S_a(w)$ have length at most $\max\limits_{r}\,2^{\ceil{\log_2(i_r+1)}}$. All cycles of $S_{\overline{a}}(w)$ have length at most $ \max\left\{2^{\ceil{\log_2(j_m)}},2^{\ceil{\log_2(j_r+1)}}\mid r<m\right\}$.
\end{lem}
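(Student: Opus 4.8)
The plan is to reduce the analysis of arbitrary cycles to the orbits of the \emph{single-$1$} vectors $e_p:=0^{p-1}10^{\ell-p}$ under $S_a(w)$ and $S_{\ol a}(w)$, where $\ell=|w|$. Since every $u\in\{0,1\}^\ell$ decomposes as $u=\bigoplus_{p:\,u_p=1}e_p$, linearity of $S_c(w)$ (part (a) of \Cref{observe}) gives $S_c(w)^i(u)=\bigoplus_p S_c(w)^i(e_p)$ for every $i$; consequently $S_c(w)^i(u)=u$ whenever $i$ is a common multiple of the orbit lengths $\Lambda_c(w)(e_p)$. As $S_c(w)$ is a bijection (\Cref{permutations}) and each $\Lambda_c(w)(e_p)$ is a power of $2$ (\Cref{power of 2}), it follows that $\Lambda_c(w)(u)$ divides the least common multiple of the $\Lambda_c(w)(e_p)$, which is simply their maximum. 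Thus it suffices to bound $\Lambda_c(w)(e_p)$ for each position $p$ and each $c\in\{a,\ol a\}$.

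First I would record the local behaviour of $S_c(w)$ on a single $1$. A direct reading of \eqref{combined} shows that $S_c(w)(e_p)$ carries a $1$ at position $p$, a $1$ at position $p-1$ precisely when $w_p=c$, and zeros elsewhere; in particular the support of an orbit can only move leftward, and is blocked as soon as it meets a letter $\ol c$. To make this precise, let $g=g_c(p)$ be the length of the maximal block of $c$'s in $w$ that ends at position $p$ (so $g=0$ and $e_p$ is fixed when $w_p=\ol c$), and let $s=p-g+1$ be the left endpoint of that block. I would show, by induction on the number of iterations and again using \eqref{combined}, that every iterate $S_c(w)^i(e_p)$ is supported in the window $[\max(s-1,1),\,p]$. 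Moreover, reading \eqref{combined} one checks that the action of $S_c(w)$ on vectors supported in this window depends only on the restriction of $w$ to the window; that is, restriction to the window conjugates $S_c(w)$ (acting on such vectors) to $S_c\big(w[\max(s-1,1),p]\big)$. Hence $\Lambda_c(w)(e_p)=\Lambda_c(\ol c c^g)(0^g1)$ when $s\geq2$, and $\Lambda_c(w)(e_p)=\Lambda_c(c^p)(0^{p-1}1)$ when $s=1$.

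It then remains to bound these two reduced quantities. The boundary case is immediate from \Cref{cycle-length}: $\Lambda_c(c^p)(0^{p-1}1)=2^{\ceil{\log_2 p}}$. For the interior case I would argue that $\Lambda_c(\ol c c^g)(0^g1)\leq 2^{\ceil{\log_2(g+1)}}$. By \Cref{increasing} and \Cref{power of 2} this quantity is a power of $2$ lying between $\Lambda_c(c^g)(0^{g-1}1)=2^{\ceil{\log_2 g}}$ and twice that value, so it equals $2^{\ceil{\log_2 g}}$ or $2^{\ceil{\log_2 g}+1}$. When $g$ is a power of $2$ one has $2^{\ceil{\log_2(g+1)}}=2^{\ceil{\log_2 g}+1}$, so both possibilities respect the claimed bound. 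When $g$ is not a power of $2$ one has $2^{\ceil{\log_2(g+1)}}=2^{\ceil{\log_2 g}}$, so I must rule out the doubling: by \Cref{single-run} the cycle of $S_c(c^g)$ through $0^{g-1}1$ then has an even number of words with first letter $1$, whence \Cref{promoted-footnote} yields $\Lambda_c(\ol c c^g)(0^g1)=\Lambda_c(c^g)(0^{g-1}1)$, as needed.

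Finally I would assemble the bounds. For $S_a(w)$, since $w$ begins with $\ol a^{j_m}$ every run of $a$'s is interior, so for each $p$ the associated $g$ satisfies $g\leq i_r$ for the run of $a$'s meeting $p$ (and $g=0$ if $w_p=\ol a$), giving $\Lambda_a(w)(e_p)\leq\max_r 2^{\ceil{\log_2(i_r+1)}}$. For $S_{\ol a}(w)$ the leftmost run $\ol a^{j_m}$ occupies positions $[1,j_m]$ and falls into the boundary case, contributing at most $2^{\ceil{\log_2 j_m}}$, while every other run $\ol a^{j_r}$ (with $r<m$) is interior and contributes at most $2^{\ceil{\log_2(j_r+1)}}$. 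Taking the maximum over $p$ and passing to the least common multiple as in the first step yields the two stated bounds. The main obstacle is the second paragraph: proving the window-containment of orbits and the conjugation to the truncated word $w[\max(s-1,1),p]$ cleanly from \eqref{combined}, and in particular isolating the leftmost run so as to recover the sharp exponent $2^{\ceil{\log_2 j_m}}$ (with no ``$+1$'') that distinguishes it from the interior runs.
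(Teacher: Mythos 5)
Your proposal is correct and follows essentially the same route as the paper: decompose $u$ via linearity of $S_c(w)$ (part (a) of \Cref{observe}) into pieces localized to individual runs of $c$'s, observe that each piece evolves under $S_c$ of a single-run word, and invoke the cycle-length computation for $a^\ell$, with the leftmost run treated separately because it has no letter to its left (the paper handles this exact point in a footnote). The only substantive difference is your third paragraph: the paper disposes of the interior case in one line by noting that the first letter of a word never enters into $S_c$, so $S_c(\ol{c}c^g)=S_c(c^{g+1})$ and \Cref{cycle-length} applies directly with $\ell=g+1$, whereas your detour through \Cref{power of 2}, \Cref{single-run} and \Cref{promoted-footnote} reaches the same bound but silently requires $g\geq 2$ for those statements to apply as written.
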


\begin{proof}
Without loss of generality we may assume that $a=1$, by \Cref{symmetry}. Hence, one has
$$w=0^{j_m}1^{i_m}\ldots 0^{j_1}1^{i_1}.$$
Let $u\in\{0,1\}^{|w|}$ and write $u=v_mu_m\ldots v_1u_1$ for words $u_r\in\{0,1\}^{i_r}$ and $v_r\in\{0,1\}^{j_r}$. We wish to prove the following.
\begin{align}
\Lambda_{1}(w)(u)&\leq\max\limits_{r}\,2^{\ceil{\log_2(i_r+1)}}\label{1-eq}\\
\Lambda_{0}(w)(u)&\leq \max\left\{2^{\ceil{\log_2(j_m)}},2^{\ceil{\log_2(j_r+1)}}\mid r<m\right\}.\label{0-eq}
\end{align}
We will only prove \eqref{1-eq}, since the proof of \eqref{0-eq} is essentially identical. The main idea is to apply (b) of \Cref{observe}, for which we observe the following. Let $b\in\{0,1\}$ and $x,y,z\in\{0,1\}^\ast$ with $|y|\geq2$. By \eqref{logical-defn} we have
\begin{align*}
S_1\hspace{-2pt}\left(x01^{|y|}z\right)\h{-2}\left(0^{|x|}by0^{|z|}\right)&=0^{|x|}\,S_1\h{-2}\left(01^{|y|}\right)\h{-2}(by)\h{1}0^{|z|}.
\end{align*}
Furthermore, \eqref{logical-defn} yields $S_1(01^{|y|})=S_1(1^{|y|+1})$. Hence the above can be stated as
\begin{align*}
S_1\h{-2}\left(x01^{|y|}z\right)\h{-2}\left(0^{|x|}by0^{|z|}\right)&=0^{|x|}\,S_1\h{-2}\left(1^{|y|+1}\right)\h{-2}(by)\,0^{|z|}.
\end{align*}
By repeated applications of the above, we obtain
\begin{align}\label{block-action}
S_1\h{-2}\left(x01^{|y|}z\right)^i\h{-2}\left(0^{|x|}by0^{|z|}\right)&=0^{|x|}\,S_1\h{-2}\left(1^{|y|+1}\right)^i(by)\,0^{|z|}.
\end{align}
Now we apply (b) of \Cref{observe} as follows to break the problem at hand into simpler pieces which can be analysed using the above.
\begin{align*}
S_1(w)^i(u)&=S_1(w)^i\h{-2}\left(v_m0^{i_m}\ldots v_20^{i_2}v_10^{i_1}\right)\h{-1}\oplus\bigoplus_{r=1}^mS_1(w)^i(0\ldots0u_r0\ldots0)\\
&=v_m0^{i_m}\ldots v_20^{i_2}v_10^{i_1}\oplus\bigoplus_{r=1}^mS_1(w)^i(0\ldots0u_r0\ldots0).
\end{align*}
The number of $0$'s to the left of $u_r$ in the above is $(j_m+\ldots+j_r)+(i_m+\ldots+i_{r+1})$. Likewise, the number of $0$'s to the right of $u_r$ is $(j_{r-1}+\ldots j_1)+(i_{r-1}+\ldots i_1)$. Hence \eqref{block-action} yields
$$S_1(w)^i(u)=v_m0^{i_m}\ldots v_20^{i_2}v_10^{i_1}\oplus\bigoplus_{r=1}^m0\ldots0\,S_1\h{-2}\left(1^{|u_r|+1}\right)^i(0u_r)\,0\ldots0.$$
Here the number of $0$'s to the left is $(j_m+\ldots+j_r)+(i_m+\ldots+i_{r+1})-1$\footnote{That this quantity is non-negative for all $r$ is important to note, since it is the reason behind the slight difference between the form of the right sides of \eqref{1-eq} and \eqref{0-eq}.} and to the right is $(j_{r-1}+\ldots j_1)+(i_{r-1}+\ldots i_1)$. Since $|u_r|=i_r$, \eqref{1-eq} now follows using \Cref{single-run-cycle-length}. 
\end{proof}

\begin{lem}\label{same}
Let $a\in\{0,1\}$ and $w=a^{i_{m+1}}\overline{a}^{j_m}a^{i_m}\ldots \overline{a}^{j_1}a^{i_1}$ with $i_r,j_r\geq1$. All cycles of $S_a(w)$ have length at most $ \max\left\{2^{\ceil{\log_2(i_{m+1})}},2^{\ceil{\log_2(i_r+1)}}\mid r\leq m\right\}$. All cycles of $S_{\overline{a}}(w)$ have length at most $\max\limits_{r}\,2^{\ceil{\log_2(j_r+1)}}$.
\end{lem}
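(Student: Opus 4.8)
The argument runs parallel to that of \Cref{opposite}. By \Cref{symmetry} I may assume $a=1$, so that
$$w=1^{i_{m+1}}0^{j_m}1^{i_m}\ldots 0^{j_1}1^{i_1}.$$
There are two bounds to establish, one for the cycles of $S_1(w)$ and one for those of $S_0(w)$. The $S_0$-bound reduces directly to \Cref{opposite}, while the $S_1$-bound needs the same block-by-block analysis as \Cref{opposite} together with one new ingredient arising from the leading run $1^{i_{m+1}}$.

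For the cycles of $S_0=S_{\overline a}$, prepend a single $\overline a=0$ to obtain
$$\overline a w=0^{1}1^{i_{m+1}}0^{j_m}1^{i_m}\ldots 0^{j_1}1^{i_1},$$
which is exactly of the form treated in \Cref{opposite}, with $m+1$ alternating blocks and leading exponent $1$. The $S_{\overline a}$-estimate there bounds every cycle length of $S_0(\overline a w)$ by $\max\{2^{\ceil{\log_2 1}},2^{\ceil{\log_2(j_r+1)}}\mid r\leq m\}=\max_r 2^{\ceil{\log_2(j_r+1)}}$. Since $w$ and $u$ are suffixes of $\overline a w$ and $0u$, \Cref{increasing} gives $\Lambda_0(w)(u)\leq\Lambda_0(\overline a w)(0u)$, and the bound for $S_0(w)$ follows.

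For the cycles of $S_1=S_a$ I mirror the proof of \eqref{1-eq}. Writing $u=u_{m+1}v_mu_m\ldots v_1u_1$ with $u_r\in\{0,1\}^{i_r}$ and $v_r\in\{0,1\}^{j_r}$, the linearity in (a) of \Cref{observe} expresses $S_1(w)^i(u)$ as an $S_1(w)$-fixed part (supported on the $0$-runs of $w$) plus the contributions coming from the single blocks $u_r$; as all cycle lengths are powers of $2$ (\Cref{power of 2}), the cycle length through $u$ is the maximum of those of the blocks, so it suffices to bound each block. Each of $u_1,\ldots,u_m$ sits just to the right of a $0$-run of $w$, so \eqref{block-action} applies verbatim and bounds its cycle length by $2^{\ceil{\log_2(i_r+1)}}$. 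The new point is the leading block $u_{m+1}$, occupying the run $1^{i_{m+1}}$ with no $0$ before it: a direct check from \eqref{logical-defn} shows that $S_1(w)$ acts on a word supported in the first $i_{m+1}$ coordinates exactly as $S_1(1^{i_{m+1}})$ does, since for $j<i_{m+1}$ the output coordinate uses $w_{j+1}=1$ and the inputs at $j,j+1$, while coordinate $i_{m+1}$ is fixed because $w_{i_{m+1}+1}=0$. Hence this block contributes cycle length $\Lambda_1(1^{i_{m+1}})(u_{m+1})$, and the maximal cycle length of $S_1(1^{i_{m+1}})$ is $2^{\ceil{\log_2(i_{m+1})}}$: it equals this on every word ending in $1$ by \Cref{single-run-cycle-length}, while a word with suffix $10^k$ reduces, on fixing its trailing zeros, to the action of $S_1(1^{i_{m+1}-k})$ and so has cycle length at most $2^{\ceil{\log_2(i_{m+1}-k)}}\leq 2^{\ceil{\log_2(i_{m+1})}}$. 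The absence of the ``$+1$'' here is exactly what separates the leading run from the interior ones, and taking the maximum over all blocks gives the stated $S_1$-bound.

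I expect the leading-block step to be the main obstacle: because \eqref{logical-defn} couples each coordinate with its right neighbour, one must verify carefully that the missing preceding $0$ turns the leading run into an uncoupled copy of $S_1(1^{i_{m+1}})$ carrying no length-inflating ``$+1$'', and then bound the cycle length of $S_1(1^{i_{m+1}})$ on words that do not end in $1$. The rest is a transcription of \Cref{opposite}.
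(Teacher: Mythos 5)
Your proof is correct. The paper disposes of this lemma with a one-line proof (``identical to the proof of \Cref{opposite}''), i.e.\ it silently repeats the block decomposition of \eqref{1-eq}--\eqref{0-eq}; your write-up follows that same skeleton for the $S_a$-half but differs in two ways worth noting. First, for the $S_{\ol a}$-bound you do not redo the block analysis at all: you prepend a single $\ol a$ so that $\ol a w$ falls under \Cref{opposite} with leading exponent $1$ (contributing only $2^{\ceil{\log_2 1}}=1$), and then transfer the bound to $w$ via \Cref{increasing}. This is a clean reduction that the paper does not make explicit, and it buys you the $S_{\ol a}$-statement for free. Second, you spell out the one genuinely new point that ``identical'' sweeps under the rug: the leading run $a^{i_{m+1}}$ has no preceding $\ol a$ to borrow, so its block evolves as an uncoupled copy of $S_a(a^{i_{m+1}})$ (your coordinate check from \eqref{logical-defn} is right — the last coordinate of the block stays put because the neighbouring letter of $w$ is $\ol a$ and the neighbouring input bit is $0$), which is exactly why that block contributes $2^{\ceil{\log_2(i_{m+1})}}$ with no ``$+1$''. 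Your supplementary bound for words not ending in $1$ (strip the trailing zeros, reduce to $S_a(a^{i_{m+1}-k})$, apply \Cref{single-run-cycle-length}) is also needed — the same gap is present but unaddressed in the paper's own proof of \Cref{opposite}, which cites \Cref{single-run-cycle-length} even though the borrowed word $0u_r$ need not end in $1$ — so your version is, if anything, more complete. The argument that the cycle length of $u$ is the maximum of the block cycle lengths is justified by the disjoint supports of the block contributions together with \Cref{power of 2}, as you indicate.
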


\begin{proof}
Identical to the proof of \Cref{opposite}.
\end{proof}

\begin{cor}\label{upper-bound}
If $a\in\{0,1\}$ and $k$ is a power of $2$ such that $a^k$ is not a factor of $w$, then all cycles of $S_a(w)$ have length at most $k$.
\end{cor}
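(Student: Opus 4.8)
The plan is to prove \Cref{upper-bound} as a direct consequence of the structural bounds established in \Cref{opposite} and \Cref{same}. The key observation is that these two lemmas together cover every possible way that the maximal run of $a$'s in $w$ can be situated: either $w$ begins with a run of $\overline{a}$ (the case handled by \Cref{opposite}, where $w=\overline{a}^{j_m}a^{i_m}\ldots\overline{a}^{j_1}a^{i_1}$) or $w$ begins with a run of $a$ (the case handled by \Cref{same}, where $w=a^{i_{m+1}}\overline{a}^{j_m}a^{i_m}\ldots\overline{a}^{j_1}a^{i_1}$). In both decompositions, the $i_r$ are precisely the lengths of the maximal runs of $a$ appearing in $w$. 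So the first step is to write $w$ in whichever of these two canonical run-length forms applies, keeping in mind the degenerate cases where $w$ consists of a single run or is very short.

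Next I would extract the relevant bound on the cycle lengths of $S_a(w)$. In the \Cref{opposite} case this bound is $\max_r 2^{\ceil{\log_2(i_r+1)}}$, and in the \Cref{same} case it is $\max\{2^{\ceil{\log_2(i_{m+1})}},\,2^{\ceil{\log_2(i_r+1)}}\mid r\leq m\}$. The crucial input from the hypothesis is that $a^k$ is not a factor of $w$, which means every maximal run of $a$ in $w$ has length strictly less than $k$, i.e. $i_r<k$, equivalently $i_r\leq k-1$ for every run length $i_r$ (and likewise $i_{m+1}\leq k-1$ in the \Cref{same} case). The plan is then to show that each term $2^{\ceil{\log_2(i_r+1)}}$ in the maximum is at most $k$.

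The heart of the argument is the inequality $2^{\ceil{\log_2(i_r+1)}}\leq k$ whenever $i_r\leq k-1$ and $k$ is a power of $2$. Since $i_r\leq k-1$ gives $i_r+1\leq k$, and $k=2^{\log_2 k}$ is itself a power of $2$, one has $\log_2(i_r+1)\leq\log_2 k$, and because $\log_2 k$ is already an integer, $\ceil{\log_2(i_r+1)}\leq\log_2 k$; exponentiating yields $2^{\ceil{\log_2(i_r+1)}}\leq k$. The slightly more delicate term is $2^{\ceil{\log_2(i_{m+1})}}$ in the \Cref{same} case, where we only have $i_{m+1}\leq k-1<k$; here the same reasoning applies directly since $i_{m+1}\leq k-1$ forces $\ceil{\log_2 i_{m+1}}\leq\log_2 k$. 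Applying this termwise bound shows the maximum in each lemma is at most $k$, which is exactly the desired conclusion.

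I expect the main obstacle to be purely bookkeeping rather than conceptual: one must verify that the run-length decomposition in \Cref{opposite} or \Cref{same} genuinely captures the hypothesis ``$a^k$ is not a factor of $w$'' as a clean bound $i_r<k$ on every $a$-run length, and one must be careful with the degenerate or boundary cases (for instance when $w$ has no $\overline{a}$ at all, when $w$ is empty or a single letter, or when $a^0=\varepsilon$ trivially is a factor). Once the case split is set up correctly, the elementary inequality on ceilings of logarithms closes the argument with no real difficulty.
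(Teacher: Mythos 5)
Your central arithmetic is right and is exactly what the paper leaves implicit: the hypothesis that $a^k$ is not a factor of $w$ means every maximal $a$-run length $i$ satisfies $i+1\leq k$, and since $k$ is a power of $2$ this gives $2^{\ceil{\log_2(i+1)}}\leq k$, so the maxima in \Cref{opposite} and \Cref{same} are at most $k$. The paper's own proof is the same two-lemma reduction, stated in one line.

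However, your case split is misaligned with the lemmas as stated. Both decompositions you cite end with a block $a^{i_1}$ with $i_1\geq1$, so, taken with the letter $a$ of the corollary, \Cref{opposite} and \Cref{same} only cover words whose \emph{last} letter is $a$; classifying by the first letter alone leaves out every $w$ ending in $\ol{a}$ (for instance $w=\ol{a}\h{1}a\h{1}\ol{a}$ begins with $\ol{a}$ but admits neither of the two forms you wrote down). The paper instead splits on whether the first and last letters of $w$ agree: when the last letter of $w$ is $\ol{a}$, one applies \Cref{opposite} (if $w$ starts with $a$) or \Cref{same} (if $w$ starts with $\ol{a}$) with the roles of $a$ and $\ol{a}$ interchanged, and then uses the \emph{second} assertion of the lemma, i.e.\ the bound on the cycles of $S_{\ol{b}}$ for $b=\ol{a}$, which is $S_a(w)$ in the original labelling (cf.\ \Cref{symmetry}). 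Those bounds are again maxima of terms $2^{\ceil{\log_2(j_r+1)}}$ over the $a$-run lengths $j_r$ (with the leftmost term being $2^{\ceil{\log_2(j_m)}}$ in the \Cref{opposite} case, which is even smaller), so your inequality closes these cases as well; but the assignment of cases to lemmas, and which half of each lemma is used, needs to be corrected along these lines. A last boundary case worth one sentence (not addressed by the paper either): if $w$ contains no occurrence of $a$ at all — which can actually arise in the application inside \Cref{long-prefix}, where the corollary is invoked for $\ol{a}w$ with $w$ possibly empty or a power of $\ol{a}$ — neither lemma applies, but then $S_a(w)$ is the identity by \eqref{combined}, so all cycles have length $1\leq k$.
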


\begin{proof}
If $w$ starts and ends with different letters, apply \Cref{opposite}. Otherwise, apply \Cref{same}.
\end{proof}

\begin{proof}[Proof of \Cref{long-prefix}]
Let $\ell=|w|=|u|$. It is clear that the first statement follows from the second by \Cref{imp-remark}, so it suffices to prove the second statement. We will show that $w':=a^k\overline{a}w$ and $u':=0^{k+1}u$ satisfy the hypothesis of \Cref{imp-suff-cond} by showing that the following hold.
\begin{enumerate}[(i)]
\item $S_b(w')(u')=u'$.

\item $T_b(w')(u')=0$.

\item There exists a word $u''\in\calO_{w'}(u')$ such that the cycle of $S_a(w')$ containg $u''$ has an odd number of words with first letter $1$.
\end{enumerate}

By hypothesis on $u$, it is easy to see that (i) holds. Since $u'$ has first letter $0$, we see that (ii) holds. To construct $u''$ as in (iii), suppose $u$ has prefix $0^{r-1}1$ for some $r\geq 1$ (since $u\neq0^\ell$). Let $h=\overline{a}w_\ell\ldots w_{\ell-r}$ and define
$$u''=S_{h}(w')(u')\in\calO_{w'}(u')$$
We see that $u''$ has prefix $0^{k-1}1$, so in particular we have
$$u''=0^{k-1}1\,S_{h}(\overline{a}w)(0u)$$
Using the fact that $\ol{a}$ is a prefix of $\ol{a}w$, this yields
\begin{align}\label{split1}
S_a(w')^i(u'')=\left[S_a\h{-2}\left(a^k\right)^i\h{-2}\left(0^{k-1}1\right)\right]\left[S_a(\ol{a}w)^i\circ S_{h}(\ol{a}w)(0u)\right]
\end{align}
By \Cref{cycle-length} we have $\Lambda_a(a^k)(0^{k-1}1)=k$ (since $k$ is a power of $2$). Since $a^k$ is not a factor of $\ol{a}w$, \Cref{upper-bound} yields $\Lambda_a(\ol{a}w)[S_{h}(\ol{a}w)(0u)]\leq k$. Combining these two facts using \eqref{split1}, we obtain $\Lambda_a(w')(u'')=k$ — in particular
\begin{equation}\label{same-cycle-length}
\Lambda_a(w')(u'')=\Lambda_a\h{-2}\left(a^k\right)\h{-3}\left(0^{k-1}1\right)
\end{equation}
Since the cycle of $S_a(a^k)$ containing $0^{k-1}1$ has an odd number of words with first letter $1$ (by \Cref{single-run}, since $k$ is a power of $2$), \eqref{split1} and \eqref{same-cycle-length} show that the cycle of $S_a(w')$ containing $u''$ also has an odd number of words with first letter $1$.
\end{proof}

\begin{thm}\label{two-runs}
Let $a\in\{0,1\}$, $j,k\geq1$ and $w=a^j\overline{a}^k$. There exists $\epsilon>0$ such that
$$\sum_{n=0}^N(-1)^{s_w(n)}\in O\left(N^{1-\epsilon}\right)$$
More generally, for $u\in\{0,1\}^k$ $(u\neq0^k)$ there exists $\epsilon>0$ such that
$$\left\|V\h{-2}\left(w,0^{j}u\right)\h{-2}(N)\right\|\in O\left(N^{1-\epsilon}\right)$$
\end{thm}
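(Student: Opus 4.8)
The plan is to prove the general statement and read the first display off from it. Since $0^j(0^{k-1}1)=0^{\ell-1}1$, the case $u=0^{k-1}1$ of the general claim says exactly that $(w,0^{\ell-1}1)$ satisfies \emph{Property Q}, whence $w$ has \emph{Property P} and the first display follows from \Cref{eval-growth}. By \Cref{symmetry} I may assume $a=1$, so $w=1^j0^k$, $\ell=j+k$, and $U:=0^ju$ with $u\neq 0^k$. I will verify the hypotheses of \Cref{imp-suff-cond}. Because the non-zero entries of $M_c(w,U)$ equal $-1$ only when $c$ is the first letter of $w$, the letter playing the role of both $a$ and $b$ in \Cref{imp-suff-cond} is forced to be $1$; and $S_1(w)(U)=U$, $T_1(w)(U)=0$ hold because $w$ begins with $1^j$ and $U$ begins with $0^j$ (cf. the discussion before \Cref{imp-suff-cond} and \Cref{imp-remark}). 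Thus it remains to produce one cycle of $S_1(w)$ on $\calO_w(U)$ carrying an odd number of words whose first letter is $1$.

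The key structural observation is that $S_1(w)$ acts on the length-$j$ prefix exactly as the single unipotent Jordan block $S_1(1^j)$ while fixing the last $k$ letters, whereas $S_0(w)$ acts on the length-$(k+1)$ suffix as $S_0(0^{k+1})$ while fixing the first $j-1$ letters. Consequently, along an $S_1(w)$-cycle the suffix is constant and the first-letter count equals the number of first-letter-$1$ words in the corresponding $S_1(1^j)$-cycle of the prefix. Writing $S_1(1^j)=I+P$ with $P$ the nilpotent shift $(Pp)_i=p_{i+1}$ and using that every cycle length $L$ is a power of two (\Cref{power of 2}), the geometric-series identity $\sum_{m=0}^{L-1}(I+P)^m=P^{L-1}$ over $\mathbb{F}_2$ gives that the parity of this count is $(P^{L-1}p)_1=p_L$. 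Hence the $S_1(1^j)$-cycle of a prefix $p$ has odd first-letter count if and only if its length $L$ satisfies $L\le j$ and $p_L=1$. In particular the prefixes $0^{L-1}10^{j-L}$ with $L$ a power of two and $L\le j$ all give odd cycles: $0^{j-1}1$ works precisely when $j$ is a power of two, and the fixed point $10^{j-1}$ ($L=1$) works for every $j$.

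So the task reduces to reaching such a prefix inside $\calO_w(U)$. First I would show $0^{j-1}1$ is always reachable: since $S_0(w)$ fixes letters $1,\dots,j-1$ and acts on the length-$(k+1)$ suffix $0u$ as the single run $S_0(0^{k+1})$ (by \Cref{observe}), and since $u\neq 0^k$, a short single-run computation—the leading letter of $S_0(0^{k+1})^{\rho}(0u)$ is $1$, where $\rho$ is the least index with $u_\rho=1$—shows that $S_0(w)^{\rho}(U)$ has prefix $0^{j-1}1$. Combined with the criterion above this already settles the case where $j$ is a power of two.

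The main obstacle is the remaining case, $j$ not a power of two, where $0^{j-1}1$ has even count and one must reach a prefix of cycle length $L<j$ (for instance the fixed point $10^{j-1}$, or some $0^{L-1}10^{j-L}$ with $L$ a power of two). Starting from a word with prefix $0^{j-1}1$, the idea is to interleave applications of $S_1(w)$, which translate the injected $1$ leftward through the prefix Jordan block, with applications of $S_0(w)$, which toggle the boundary coordinate $j$ precisely when the current suffix carries a $1$ in coordinate $j+1$, so as to clear the spurious coordinates and steer the prefix onto the target. The delicate point—and the crux of the whole proof—is a controllability statement: that the suffix, evolving under the single run $S_0(0^{k+1})$, does present a $1$ in coordinate $j+1$ at each moment this is required. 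I would establish this by an $\mathbb{F}_2$-affine analysis of the two Jordan blocks $S_1(1^j)$ and $S_0(0^{k+1})$ coupled along coordinate $j$: the $S_1(1^j)$-module generated by the coordinate vector $e_j$ is all of $\mathbb{F}_2^j$, while $S_0(w)$ furnishes the coordinate-$j$ toggles, and \Cref{last-letter}, \Cref{observe}, together with the single-run cycle-length results \Cref{cycle-length} and \Cref{single-run-cycle-length}, pin down the suffix period finely enough to guarantee the toggles occur when needed. Once a prefix meeting the criterion is reached, \Cref{imp-suff-cond} completes the proof.
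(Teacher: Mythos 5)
Your overall strategy is the paper's: verify the hypotheses of \Cref{imp-suff-cond} for $(w,0^ju)$ with both the fixing letter and the cycle letter equal to $a$ (you are right that the cycle letter must be the first letter of $w$ for the first-letter count to measure the $-1$'s), using the decomposition of $S_a(w)$ as the prefix block $S_a(a^j)$ fixing the last $k+1$ coordinates and of $S_{\ol{a}}(w)$ as the suffix block $S_{\ol{a}}(\ol{a}^{k+1})$ fixing the first $j-1$ coordinates. Your parity criterion is correct and pleasant: since cycle lengths are powers of two, $\sum_{m=0}^{L-1}(I+P)^m=P^{L-1}$ over $\mathbb{F}_2$, so a cycle with prefix $p$ is odd exactly when its length $L$ satisfies $L\leq j$ and $p_L=1$; combined with reaching the prefix $0^{j-1}1$ via $S_{\ol a}(w)^{\rho}(0^ju)$, this genuinely settles the case where $j$ is a power of two.

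For $j$ not a power of two, however — which is the substance of the theorem — your argument stops at an unproved ``controllability statement,'' and this is a real gap, not a detail. Worse, the statement you propose to prove is the wrong target: you cannot arrange for the suffix to present a $1$ in coordinate $j+1$ ``at each moment this is required,'' because the suffix evolves autonomously under $S_{\ol a}(\ol a^{k+1})$; but for the same reason no timing control is needed. Since $S_a(w)$ does not move coordinates $j,\dots,\ell$ at all, it does not advance the suffix clock, so you may simply apply $S_{\ol a}(w)$ in blocks whose net effect is a single toggle of coordinate $j$ (such blocks recur forever, because $u\neq0^k$ makes the evolving suffix have first letter $1$ at infinitely many times of a periodic evolution) and interleave arbitrarily many applications of $S_a(w)$ between consecutive toggles. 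Consequently, words whose length-$j$ prefix is $\bigoplus_{t\in T}S_a(a^j)^t\h{-2}\left(0^{j-1}1\right)$ lie in $\calO_w(0^ju)$ for every finite set $T$ of exponents. The missing ingredient in your sketch is the algebraic fact that some such XOR equals $10^{j-1}$ (a fixed point of $S_a(w)$ with first letter $1$, hence an odd singleton cycle). That identity is exactly what the paper supplies: \Cref{direct-sum} shows that taking $T$ to be the set of subset-sums of the binary expansion produces the word $10^{j-1}$, and \Cref{general} packages the interleaving just described. Equivalently you could argue that $e_1$ lies in the $\mathbb{F}_2$-span of $\left\{(I+P)^te_j\mid t\geq0\right\}$ because the mod-$2$ Pascal matrix is unitriangular, and then realize the span element as a set-XOR of orbit elements. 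Either way, it is this reachability identity — not any fine control of the suffix period via \Cref{cycle-length} or \Cref{single-run-cycle-length} — that is the crux, and it is absent from your proposal.
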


The proof requires some lemmas.

\begin{lem}\label{direct-sum}
Let $j\geq1$ with binary expansion
$$j=2^{p_1}+\ldots+2^{p_r},\,0\leq p_1<\ldots<p_r$$
To each set $I\subset [r]$, associate a number $j_I$ given by
$$j_I=\sum_{i\in I}2^{p_i}$$
For $a\in\{0,1\}$ we have
$$\bigoplus_{I\subset [r]}S_a\h{-2}\left(a^{j+1}\right)^{j_I}\h{-3}\left(0^{j}1\right)=10^j$$
\end{lem}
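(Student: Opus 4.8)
The plan is to exploit the $\mathbb{F}_2$-linearity of $S_a(a^{j+1})$ in order to recast the large XOR as a single power of a nilpotent operator. By \Cref{symmetry} we may assume $a=1$ without loss of generality, so write $\sigma=S_1(1^{j+1})$ and $e=0^j1$; by (a) of \Cref{observe}, $\sigma$ is additive, and since $S_1(1^{j+1})(0^{j+1})=0^{j+1}$ this makes $\sigma$ a linear endomorphism of the $\mathbb{F}_2$-vector space $(\{0,1\}^{j+1},\oplus)$. The first step is the purely formal observation that, since $j_I=\sum_{i\in I}2^{p_i}$ ranges over all submasks of $j$ as $I$ ranges over subsets of $[r]$, the operator identity
\[
\bigoplus_{I\subseteq[r]}\sigma^{j_I}=\sum_{I\subseteq[r]}\prod_{i\in I}\sigma^{2^{p_i}}=\prod_{i=1}^{r}\left(\mathrm{id}+\sigma^{2^{p_i}}\right)
\]
holds in the commutative ring $\mathbb{F}_2[\sigma]$ (with the empty product read as $\mathrm{id}$), simply because $\prod_i(\mathrm{id}+x_i)=\sum_I\prod_{i\in I}x_i$ and $\prod_{i\in I}\sigma^{2^{p_i}}=\sigma^{j_I}$.

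The second, and key, step is to collapse this product using the characteristic-$2$ Frobenius identity. Since $\mathbb{F}_2[\sigma]$ is commutative of characteristic $2$, squaring is a ring homomorphism, so $(\mathrm{id}+\sigma)^{2^p}=\mathrm{id}+\sigma^{2^p}$ for every $p\geq0$. Hence
\[
\prod_{i=1}^{r}\left(\mathrm{id}+\sigma^{2^{p_i}}\right)=\prod_{i=1}^{r}\left(\mathrm{id}+\sigma\right)^{2^{p_i}}=(\mathrm{id}+\sigma)^{\,2^{p_1}+\cdots+2^{p_r}}=(\mathrm{id}+\sigma)^{j},
\]
so that the entire left-hand side of the lemma equals $(\mathrm{id}+\sigma)^j(e)$. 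This is the point where the binary expansion of $j$ does all the work, and I expect this algebraic reformulation — recognizing the subset-sum XOR as a product and then telescoping it via the freshman's dream — to be the crux of the argument; everything after it is a direct computation.

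It then remains to identify $\sigma$ concretely and evaluate $(\mathrm{id}+\sigma)^j(e)$. Writing $e_i=0^{i-1}10^{j+1-i}$ for the standard basis, the equations \eqref{s_w} (specialized to $w=1^{j+1}$) give $\sigma(e_1)=e_1$ and $\sigma(e_i)=e_{i-1}\oplus e_i$ for $2\leq i\leq j+1$. Thus $\sigma=\mathrm{id}+N$, where $N$ is the nilpotent shift with $Ne_i=e_{i-1}$ for $i\geq2$ and $Ne_1=0$. Over $\mathbb{F}_2$ this yields the decisive simplification $\mathrm{id}+\sigma=N$, whence $(\mathrm{id}+\sigma)^j(e)=N^j(e_{j+1})=e_1$, since each application of $N$ lowers the index by one and $e=e_{j+1}$. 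As $e_1=10^j$, the lemma follows. I anticipate no real difficulty in this last paragraph beyond bookkeeping of indices; the only mild care needed is to read off the action of $\sigma$ correctly from \eqref{s_w}.
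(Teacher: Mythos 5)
Your proof is correct, and it takes a genuinely different route from the paper. The paper's proof works combinatorially: it establishes, by induction on $r$, an explicit description of the support of each word $S_a(a^{j+1})^{j_I}(0^j1)$ (namely that position $i$ carries a $1$ exactly when $i=j_J$ for some $J\subset I$), and then finishes with a counting argument — the number of supersets of a fixed $J$ is $2^{r-|J|}$, odd only for $J=[r]$, so only position $j$ survives the XOR. You instead exploit the $\mathbb{F}_2$-linearity of $\sigma=S_a(a^{j+1})$ (which is exactly part (a) of \Cref{observe}): the subset-XOR becomes the operator $\prod_{i=1}^r(\mathrm{id}+\sigma^{2^{p_i}})$, the freshman's dream in the commutative characteristic-$2$ ring $\mathbb{F}_2[\sigma]$ collapses this to $(\mathrm{id}+\sigma)^j$, and reading off $\sigma=\mathrm{id}+N$ from \eqref{s_w} (with $N$ the nilpotent shift killing $10^j$) gives $(\mathrm{id}+\sigma)^j(0^j1)=N^j(0^j1)=10^j$. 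Your identification of $\sigma$ agrees with \eqref{s_w} and with \eqref{combined}, and the evaluation $N^j e_{j+1}=e_1$ is exact, so there is no gap. The trade-off: the paper's induction yields explicit structural information about each intermediate word $S_a(a^{j+1})^{j_I}(0^j1)$ (e.g.\ the identity $S_a(a^{j+1})^{j}(0^{j}1)=10^{j-1}1$ established along the way), whereas your argument is shorter, avoids both the induction and the superset-counting step, and makes transparent \emph{why} the binary expansion of $j$ enters — it is precisely the Frobenius identity $(\mathrm{id}+\sigma)^{2^p}=\mathrm{id}+\sigma^{2^p}$; the unipotent form $\sigma=\mathrm{id}+N$ also re-explains, for this particular $w$, why cycle lengths are powers of $2$.
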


\begin{proof}
Let $w=a^{j+1}$ and $u^I=S_a(w)^{j_I}(0^k1)$ with letters
$$u^I=u^I_{j}\ldots u^I_0$$
We claim that
\begin{equation}\label{claim-I}
u^I_i=\begin{cases}
1 & i=j_J\text{ for some }J\subset I\\
0 & \text{otherwise}
\end{cases}
\end{equation}
Using the above, the lemma can be proved by a straighforward counting argument as follows. Let
\begin{align*}
u=u_j\ldots u_0&=\bigoplus_{I\subset [r]}u^I\\
\implies u_i&=\bigoplus_{I\subset [r]}u^I_i
\end{align*}
If $i$ is not of the form $j_J$ for $J\subset[r]$, then $1$ does not appear in the above sum and so $u_i=0$. If $i=j_J$ for some $J\subset[r]$, then the number of times $1$ appears in the sum equals the number of sets $J'\subset [r]$ which contain $J$. This equals $2^{r-|J|}$ which is odd only when $|J|=r$, i.e. $J=[r]$ and $i=j_{[r]}=j$. Hence $u_i=1$ if and only if $i=j$, i.e. $u=10^j$.\\ 

It now remains to prove \eqref{claim-I}, for which we will use induction on $r$. For the base case $r=1$ we see that $j=2^{p_1}$ is a power of $2$ and $j_\emptyset=0$, $j_{\{1\}}=j$. \eqref{claim-I} is trivial for $I=\emptyset$, so it suffices to verify it for $I=\{1\}$. The word
$$u^{\{1\}}=S_a(w)^j\h{-1}\left(0^k1\right)$$
has first letter $1$ and suffix
$$S_a\h{-2}\left(a^j\right)^j\h{-2}\left(0^{j-1}1\right)$$
By \Cref{cycle-length} this suffix is just $0^{j-1}1$, so we have
\begin{equation}\label{special-case}
u^{\{1\}}=S_a(w)^j\h{-2}\left(0^k1\right)=10^{j-1}1
\end{equation}
Now suppose $r>1$ and that the claim is true for $j'=2^{p_1}+\ldots+2^{p_{r-1}}$. We will prove the claim for $j=j'+2^{p_{r}}$. If $I\subset[r-1]$, then $j_I\leq j'$ and so we see that
\begin{equation}\label{without-largest}
S_a(w)^{j_I}\h{-2}\left(0^{j}1\right)=0^{2^{p_r}}S_a(w)^{j_I}\h{-2}\left(0^{j'}1\right)
\end{equation}
Hence \eqref{claim-I} follows by the induction hypothesis. Next suppose $I=\{r\}\cup I'$ for some $I'\subset [r-1]$, so $j_I=2^{p_r}+j_{I'}$. We have
\begin{align*}
S_a(w)^{2^{p_r}}\h{-2}\left(0^k1\right)&=0^{j'}S_a(w)^{2^{p_r}}\h{-2}\left(0^{2^{p_r}}1\right)\\
&=0^{j'}10^{2^{p_r}-1}1
\end{align*}
Here, the second equality essentially comes from \eqref{special-case}. Now applying $S_a(w)^{j_{I'}}$ to both sides and using the induction hypothesis proves \eqref{claim-I}.
\end{proof}

\begin{lem}\label{general}
Let $a\in\{0,1\}$, $j,k\geq 1$, $w=a^j\ol{a}^k$ and $u\in\{0,1\}^k$ $(u\neq 0^k)$. There exists a word in $\calO_w(0^ju)$ with prefix $10^{j-1}$.
\end{lem}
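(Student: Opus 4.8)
The plan is to exploit the very special two-run shape of $w$. By \Cref{symmetry} I may assume $a=1$, so that $w=1^j0^k$ and $\ell=j+k$. A direct reading of \eqref{combined} shows that the two generators act on overlapping blocks that share the letter in position $j$: the map $S_1(w)$ replaces each of the letters in positions $1,\dots,j-1$ by its XOR with the letter immediately to its right and fixes positions $j,\dots,\ell$, whereas $S_0(w)$ fixes positions $1,\dots,j-1$ and replaces positions $j,\dots,\ell-1$ by the XOR with the letter to the right while fixing the last letter. Thus $S_1(w)$ ``runs'' the head $1\ldots j$ exactly as $S_1(1^j)$, while $S_0(w)$ ``runs'' the tail $j\ldots\ell$ exactly as $S_1(1^{k+1})$ (the latter is also visible from (b) of \Cref{observe}, since the length-$(k+1)$ suffix of $w$ is $10^k$). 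My target is a word in $\calO_w(0^ju)$ whose first $j$ letters spell $10^{j-1}$.

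I would encode each word as a triple $(P,c,Q)$, where $P$ is the block of positions $1,\dots,j-1$, $c$ is the letter in position $j$, and $Q$ is the block of positions $j+1,\dots,\ell$. The starting word $0^ju$ is $(0^{j-1},0,u)$ with $u\neq0^k$, and the goal is to reach $(10^{j-2},0,\ast)$ for $j\ge2$ (for $j=1$ the head is the single letter $c$, and the goal is just $c=1$). In these coordinates $S_0(w)$ fixes $P$ and sends $(c,Q)\mapsto\bigl(c\oplus Q_1,\;S_1(1^k)(Q)\bigr)$, so it changes position $j$ without disturbing the head; and $S_1(w)$ fixes $c$ and $Q$ and acts on $P$ by the affine map $P\mapsto S_1(1^{j-1})(P)\oplus 0^{j-2}1$ when $c=1$ and by the linear map $P\mapsto S_1(1^{j-1})(P)$ when $c=0$. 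So the problem splits into a ``toggling'' of position $j$ (via $S_0(w)$) and a ``stirring'' of the head $P$ (via $S_1(w)$), coupled only through the shared letter $c$.

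Two facts then finish the argument. First, the toggle is always available: along the $S_1(1^{k+1})$-orbit of any length-$(k+1)$ word other than $0^{k+1}$ and $10^k$, the first letter takes both values (were it constant, a short downward induction forces the word into the shape $\beta 0^k$). Since $S_1(1^k)$ is a bijection (\Cref{permutations}), the tail $Q$ stays nonzero under repeated applications of $S_0(w)$, so the pair $(c,Q)$ is never $10^k$ and never $0^{k+1}$; hence I can set position $j$ to whichever value I wish by applying $S_0(w)$ a suitable number of times, leaving $P$ untouched and $Q$ nonzero. Second, the stirring is transitive: writing $L=S_1(1^{j-1})$, composing the $c=1$ map with the inverse of the $c=0$ map yields the translation $P\mapsto P\oplus 0^{j-2}1$, and together with $L$ this generates translation by the whole $L$-cyclic subspace of $0^{j-2}1$, which a short computation with \eqref{s_w} (e.g.\ $L(0^{j-2}1)=0^{j-3}11$) shows to be all of $\{0,1\}^{j-1}$. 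Consequently, by alternating $S_1(w)$-moves with the toggles of position $j$ needed to select $c$ before each move, I can drive $P$ from $0^{j-1}$ to $10^{j-2}$; a final toggle sets $c=0$, and the resulting word lies in $\calO_w(0^ju)$ with prefix $10^{j-1}$.

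The hard part is precisely the coupling through position $j$: I must guarantee that position $j$ can be toggled on demand at every stage of the head-stirring, and this is exactly what the nonvanishing of the tail $Q$—secured by $u\neq0^k$ together with the bijectivity of $S_1(1^k)$—provides. The remaining ingredients, namely reading off the two generator formulas from \eqref{combined} and verifying that the $L$-cyclic subspace of $0^{j-2}1$ exhausts $\{0,1\}^{j-1}$, I expect to be routine.
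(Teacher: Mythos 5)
Your argument is correct, and it takes a genuinely different route from the paper's. Both proofs start from the same structural observation that for $w=a^j\ol{a}^k$ (say $a=1$ by \Cref{symmetry}) the generator $S_1(w)$ stirs only positions $1,\dots,j-1$ while $S_0(w)$ stirs only positions $j,\dots,j+k$, so a word splits into a head, the pivot letter at position $j$, and a tail. The paper then argues constructively: it runs the tail under $S_{\ol{a}}(w)$, which flips position $j$ exactly at the carry times $q_1<q_2<\dots$ dictated by the tail's own dynamics (nonempty and periodic because $u\neq0^k$), interleaves head-stirs $S_a(w)$ between consecutive carries, and chooses the numbers of stirs from the binary expansion of $j$ so that \Cref{direct-sum} (itself proved via \Cref{cycle-length}) makes the accumulated head equal to the desired prefix. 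You instead prove reachability abstractly: since $Q$ stays nonzero under the bijection $S_1(1^k)$, the pair $(c,Q)$ is never $0^{k+1}$ or $10^k$, and your downward induction (which is correct) shows the first letter of such a word is non-constant along its $S_1(1^{k+1})$-orbit, so the pivot can be set to either value on demand without touching $P$; hence both head-moves $P\mapsto L(P)$ and $P\mapsto L(P)\oplus0^{j-2}1$ are available at every stage, the group they generate contains all translations by the $L$-span of $0^{j-2}1$ (using inverses is harmless since the generators have finite order), and that span is all of $\{0,1\}^{j-1}$ by the same leading-one computation as in \Cref{matrix-size-bound-1}. This reaches the head $10^{j-2}$, a final toggle sets the pivot to $0$, and your separate treatment of $j=1$ is fine. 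Your route bypasses \Cref{direct-sum} entirely and actually shows that every length-$j$ prefix occurs in $\calO_w(0^ju)$, whereas the paper's route exhibits an explicit orbit element with the required prefix; the two steps you defer as routine (the spanning computation and the toggling induction) are indeed routine, so there is no gap.
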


\begin{proof}
Let $q_0=0$ and $q_1<q_2<\ldots$ be all the values of $q>0$ such that
$$T_{\ol{a}}\h{-1}\left(\ol{a}^j\right)\circ S_{\ol{a}}\h{-1}\left(\ol{a}^j\right)^{q}\h{-1}(u)=1$$
It can be seen that the sequence $(q_i)_{i\geq1}$ is non-empty, and hence it is also an infinite periodic sequence (since $S_{\ol{a}}(\ol{a}^k)$ is a permutation of $\{0,1\}^k$ of finite order). Using \eqref{logical-defn}, we see that
$$S_{\ol{a}}(w)^{q_i-q_{i-1}}\h{-2}\left[0^{j}S_{\ol{a}}\h{-1}\left(\ol{a}^k\right)^{q_{i-1}}\h{-1}(u)\right]$$
has prefix $0^{j-1}1$. Hence, induction on $i\geq1$ using \Cref{observe} yields the following for $v\in\{0,1\}^j$.
\begin{equation}\label{difference}
S_{\ol{a}}(w)^{q_i-q_{i-1}}\left[v\,S_{\ol{a}}\h{-1}\left(\ol{a}^k\right)^{q_{i-1}}\h{-1}(u)\right]=\left(v\oplus0^{j-1}1\right)S_{\ol{a}}\h{-1}\left(\ol{a}^k\right)^{q_i}\h{-1}(u)
\end{equation}
Also, for $p\geq0$ and $x\in\{0,1\}^k$ we have
\begin{equation}\label{padding}
S_a(w)^p(vx)=S_a\h{-1}\left(a^j\right)^p\h{-2}(v)\,x
\end{equation}
Using \eqref{difference}, \eqref{padding} and (b) of \Cref{observe} yields the following for arbitrary non-negative integers $p_1,\ldots,p_n$ by induction on $n$, where $\pi=S_a(w)$ and $\tau=S_{\ol{a}}(w)$.
\begin{align}
&\quad\pi^{p_1}\circ\tau^{q_{n}-q_{n-1}}\circ\ldots\circ\pi^{p_n}\circ\tau^{q_1-q_0}\left(0^{j}u\right)\nonumber\\
&=\left(\bigoplus_{i=1}^{n}S_a\h{-2}\left(a^j\right)^{p_1+\ldots+p_i}\h{-2}\left(\ol{a}^{j-1}1\right)\right)\h{-2}S_{\ol{a}}\h{-2}\left(\ol{a}^k\right)\h{-2}(u)\label{final}
\end{align}
With notation as in \Cref{direct-sum}, we now arrange the elements of the set $\{j_I\mid I\subset[r]\}$ in ascending order (note that $j_I\neq j_J$ for $I\neq J$) as $P_0<P_1<\ldots<P_n$. Set $p_i=P_i-P_{i-1}$ for $i\geq 1$, so that the corresponding word obtained using \eqref{final} has prefix $10^{j-1}$ by \Cref{direct-sum}. This word also clearly lies in $\calO_w(0^ju)$.
\end{proof}

\begin{proof}[Proof of \Cref{two-runs}]
It is clear that the first statement follows from the second by setting $u=0^{k-1}1$, so it suffices to prove the second statement. For this we will show that $w$ and $0^ju$ satisfy the hypothesis of \Cref{imp-suff-cond} by showing that the following hold.

\begin{enumerate}[(i)]
\item $S_a(w)(0^ju)=0^ju$.
\item $T_a(w)(0^ju)=0$.
\item There exists a word $u'\in\calO_w(0^ju)$ such that $S_{\ol{a}}(w)(u')=u'$ and the first letter of $u'$ is $1$.
\end{enumerate}

(i) and (ii) follow immediately. For (iii) let $u'\in\calO_w(0^ju)$ with prefix $10^{j-1}$, whose existence is guaranteed by \Cref{general}.
\end{proof}

\section{Conclusion}\label{sec7}

In this article, we find several classes of subword-counting sequences satisfying the {\em Properties $P$, $Q$}. However, the authors cannot give a necessary and sufficient condition for these two properties. The major technique used in this article is to argue that the associated matrix has all eigenvalues strictly smaller than $2$ in absolute value, however, one may expect that there are some subword-counting sequences satisfying {\em Properties $P$} but having $2$ as eigenvalue for their matrices. Thus, in the further work, one may want to find a necessary and sufficient condition for the {\em Properties $P$, $Q$}. Also, one may want to find more classes of subword-counting sequences with the {\em Properties $P$, $Q$}.


\bibliographystyle{splncs03}
\bibliography{biblio}

\end{document}